\documentclass[11pt,twoside]{amsart}

\usepackage{amsmath,amssymb
}
\usepackage{amsfonts}
\usepackage{amsthm,amscd}
\usepackage{amsmath}
\usepackage{amssymb}
\usepackage{amstext}
\usepackage{color}
\usepackage{latexsym}
\usepackage{amscd,graphics}
\usepackage{enumerate}

\usepackage{dashundergaps}

\usepackage{tikz}
\usetikzlibrary{calc}
\usetikzlibrary{decorations.pathreplacing,angles,quotes}

\usepackage{float}

\usepackage{bbm}
\usepackage{empheq}
\usepackage{mathtools}
\PassOptionsToPackage{hyphens}{url}
\usepackage{hyperref}
\hypersetup{colorlinks = false, urlcolor = blue, bookmarksopen = true}

\usepackage{geometry}
\allowdisplaybreaks

\usepackage{scalerel}
\usepackage{stackengine,wasysym}

\newcommand{\cA}{\mathcal{A}}

\newcommand{\cD}{\mathcal{D}}

\newcommand{\cH}{\mathcal{H}}
\newcommand{\cI}{\mathcal{I}}

\newcommand{\cM}{\mathcal{M}}
\newcommand{\cN}{\mathcal{N}}

\newcommand{\cP}{\mathcal{P}}
\newcommand{\cQ}{\mathcal{Q}}

\newcommand{\cU}{\mathcal{U}}
\newcommand{\cV}{\mathcal{V}}

\newcommand{\bN}{\mathbbm{N}}

\newcommand{\bR}{\mathbbm{R}}

\newcommand{\oQ}{\mathring{Q}}

\newcommand{\ttH}{\widetilde{H}}
\newcommand{\tth}{\widetilde{h}}
\newcommand{\tthtop}{\widetilde{h}_{\scriptsize{\mbox{\rm top}}}}
\newcommand{\ttP}{\widetilde{P}}
\newcommand{\ttPtop}{\widetilde{P}_{\scriptsize{\mbox{\rm top}}}}
\newcommand{\Ptop}{P_{\scriptsize{\mbox{\rm top}}}}

\newcommand{\htop}{h_{\scriptsize{\mbox{\rm top}}}}

\newcommand{\ve}{\varepsilon}

\newcommand{\diam}{\mbox{diam}}

\newcommand{\intd}{\,{\rm d}}

\DeclareMathOperator{\supp}{supp}

\newcommand{\unidom}{\overset{\scriptsize{\mbox{\rm unif}}}{\geqslant}}

\newcommand{\comments}[1]{} %Comment

\newtheorem{proposition}{Proposition}[section]
\newtheorem{lemma}[proposition]{Lemma}

\newtheorem{theorem}[proposition]{Theorem}

\newtheorem{question}[proposition]{Question}

\newtheorem{corollary}[proposition]{Corollary}

\newtheorem{definition}[proposition]{Definition}

\theoremstyle{remark}

\theoremstyle{definition}

\numberwithin{equation}{section}

\begin{document}
\title[Entropy via partitions of unity]{Entropy structures with continuous partitions of unity}

\author{J{\'e}r{\^o}me Carrand}
\address{Laboratoire Paul Painlev\'e, Universit\'e de Lille, 59655 Villeneuve-d'Ascq, France}
\email{jerome.carrand@univ-lille.fr}

\date{\today}
\begin{abstract}
Using only continuous partitions of unity, we provide equivalent definitions for the metric, topological and topological tail entropies and pressures of a continuous self-map of a compact set, as well as their conditional versions. A tail variational principle for these new definitions is proved. We extend Downarowicz's notions of candidates and entropy structures to account for almost-increasing sequences of functions arising from the new definitions. Finally, we deduce a partial answer to a question raised by Newhouse.
\end{abstract}
\thanks{ }
\maketitle

\section{Introduction}

Consider a topological dynamical system $(X,T)$, that is a compact metric set $X$ and a continuous surjective map $T : X \to X$. Intuitively, chaotic behaviours should emerge when iterating $T$. The most widely used notions to measure this phenomenon is entropy. Among the notions of entropy within dynamical systems, we distinguish two of the classical ones: a purely topological notion (known as \emph{topological entropy}) which is just a non-negative number $\htop(T)$, and a measure-theoretical one (known as \emph{metric entropy}) which is a function $h : \mu \mapsto h_\mu(T)$ over $\cM(X,T)$, the simplex of $T$-invariant measures (by which we mean the Borel probabilities) on $X$. Although these two notions are different in nature, they are related by a variational principal: the topological entropy is the supremum of the metric entropy function~\cite{goodwyn69,goodman71,misiurewicz76a}.

These two notions of entropy enjoy many equivalent definitions, usually obtained by considering limits of sequences of intermediate entropy notions defined at progressively finer scales. These intermediate entropy notions are often referred to as \emph{local} entropy. In the last forty years, a series of works has been published to study the connections between these topological and measure-theoretical notions, notably through new variational principles~\cite{BK83, LW77, DS02, BD04, K80, romagnoli03, HY07, OW93, N89, downarowicz05, LCC12}. Generalisations of entropy to operators have also been investigated~\cite{GLW86, M00, DF05}.

\medskip

To understand the role of small scales in the emergence of entropy, one can consider a sequence of local entropy functions $h_k$, taken at increasingly smaller scales, converging point-wise to the metric entropy $h$. Usually, this convergence is not uniform, and it is then important to characterize the ``type of non-uniformity" of the convergence. As the choice of the sequence $h_k$ is somewhat arbitrary, Downarowicz introduced in \cite{downarowicz05} a natural equivalence relation (called \emph{uniform equivalence}) among abstract non-decreasing sequences (called \emph{candidates}) of functions converging point-wise to $h$. Uniformly equivalent candidates converge with the same type of non-uniformity to $h$. Furthermore, there is a particularly important equivalence class which can be thought as a ``master invariant". This class and its elements are called \emph{entropy structures}. Indeed, most commonly known entropy notions can be recovered from any entropy structure by taking several combinations of limits and suprema: obviously the metric entropy and the topological entropy can be recovered, but also Misiurewicz topological tail entropy~\cite{misiurewicz76b}, as well as new invariant coming from the theory of symbolic extensions~\cite{BD04} such as the symbolic extension entropy function. Other new invariants appear: a transfinite sequence of functions (which is stationary) and its order of accumulation~\cite{BMc12}. We refer to Downarowicz's book~\cite{down11book} on entropy for a complete introduction. 

Moreover, Downarowicz proved in~\cite{downarowicz05} that most known definitions of entropy (and their associated notions of local entropy) lead to entropy structures. Among those passing definitions are the ones attributed to Bowen, Katok, Romagnoli, Brin and Katok, Ornstein and Weiss, Newhouse, all recalled in~\cite{downarowicz05}. To do so, Downarowicz introduced a new definition involving finite family of functions over $X$. The definition of Ghys, Langevin and Walczak~\cite{GLW86} involving measurable partitions of unity is not covered. 

Surprisingly, one possible exception is the classical definition involving partitions. Indeed, in \cite{downarowicz05}, Downarowicz provided an example of a refining sequence of partitions whose associated candidate fails to be an entropy structure. On the other hand, Lindenstrauss \cite{lindenstrauss99} provided a condition guaranteeing the existence of refining sequence of essential partition, for which the associated candidate is an entropy structure.

\medskip

A natural question, raised by Newhouse~\cite{downarowicz05}, is then: Does there always exist a sequence of refining partitions such that its associated candidate is an entropy structure?

\medskip

In the present paper, we prove that from any refining sequence of Borel partitions, we can construct a candidate which is an entropy structure, where the definitions of candidates and uniform equivalence are slightly weakened to admit \emph{almost} non-decreasing sequences $h_k$. This is done by investigating a suitable definition of the metric entropy involving \emph{continuous partitions of unity}, which is inspired from the Ghys--Langevin--Walczak entropy definition~\cite{GLW86}. We also derive new definitions of the topological entropy and of the topological tail entropy involving only continuous partitions of unity, as well as their pressure counterparts.

\subsection{Organization of the paper and statement of results}

Sections~\ref{sect: def and equiv}--\ref{sect: vanish diam} are dedicated to the systematic study of a new definition of metric entropy involving continuous partitions of unity. More precisely, Ghys, Langevin and Walczak~\cite{GLW86} introduced a definition of the metric entropy obtained by taking the supremum over all finite \emph{measurable} partitions of unity $\Phi$ of a suitable notion of local entropy $\tth_\mu(T,\Phi)$. In this definition, we restrict the supremum to the class of \emph{continuous} partitions of unity (Definition~\ref{def: metric entropy}), and prove that the obtained function $\mu \mapsto \tth_\mu(T)$ is still the metric entropy. We also derive a new definition of the topological entropy $\tthtop(T)$ involving only continuous partitions of unity (Definition~\ref{def: topological entropy}). We give a condition for a sequence of continuous partitions of unity to be maximizing, uniformly on the measure. 

More precisely, we prove.
\begin{theorem}\label{main thm 1}
For any topological dynamical system $(X,T)$ with finite topological entropy, it holds $\tthtop(T) = \htop(T)$, and for any $T$-invariant measure $\mu$, $\tth_{\mu}(T) = h_{\mu}(T)$.\\
Furthermore, if $\Phi_n$ is a sequence of continuous partitions of unity such that $\diam(\Phi_n) = \max \{ \diam (\supp \varphi) \mid \varphi \in \Phi_n \}$ converges to $0$ as $n$ goes to infinity, then for every $T$-invariant measure $\mu$, $\Phi_n$ is maximizing in the sense
\begin{align*}
\tth_\mu(T) = \lim_{n \to \infty} \tth_\mu(T,\Phi_n).
\end{align*}
\end{theorem}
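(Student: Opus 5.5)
The plan is to realise the Ghys--Langevin--Walczak quantities as honest entropies of a stationary process. Then the equality $\tth_\mu(T)=h_\mu(T)$ splits into an upper bound coming from \cite{GLW86} and a lower bound coming from a conditional-entropy estimate that only uses $\diam(\Phi_n)\to 0$. I take the local entropy to be the one of Definition~\ref{def: metric entropy} (GLW-type): for $\Phi=\{\varphi_1,\dots,\varphi_k\}$, $\Phi^n$ consists of the products $\varphi_{i_0}\cdot\varphi_{i_1}\circ T\cdots\varphi_{i_{n-1}}\circ T^{n-1}$, $H_\mu(\Phi^n)=\sum \eta\bigl(\int \psi\intd\mu\bigr)$ with $\eta(t)=-t\log t$, and $\tth_\mu(T,\Phi)=\lim_n \frac1n H_\mu(\Phi^n)$.

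\textbf{Upper bound.} Every continuous partition of unity is in particular a measurable one. The GLW theorem \cite{GLW86} says that the supremum of $\tth_\mu(T,\Phi)$ over measurable partitions of unity equals $h_\mu(T)$. Hence $\tth_\mu(T,\Phi)\le h_\mu(T)$ for every continuous $\Phi$, and so $\tth_\mu(T)\le h_\mu(T)$. If one wants a self-contained argument, I would use concavity of $\eta$ along the lines of GLW, but I intend to simply invoke their result.

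\textbf{Lower bound via a random labelling.} Build the measure $\hat\mu$ on $X\times\{1,\dots,k\}^{\bN}$ as follows: pick $x$ according to $\mu$, then pick labels $i_j$ independently given $x$, with $\mathbb P(i_j=i\mid x)=\varphi_i(T^jx)$. Then $\hat\mu$ is $T\times\sigma$-invariant, and the probability of the label cylinder $[i_0\dots i_{n-1}]$ is exactly $\int\varphi_{i_0}\cdots\varphi_{i_{n-1}}\circ T^{n-1}\intd\mu$. Consequently $\tth_\mu(T,\Phi)$ is the entropy of the label process $L$.

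Now fix a finite Borel partition $P$ with $\mu(\partial P)=0$ and $h_\mu(T,P)>h_\mu(T)-\ve$ (or the other obvious inequality if $h_\mu(T)=\infty$). By the Kolmogorov--Sinai formalism applied to $\hat\mu$,
\begin{align*}
h_\mu(T,P)\le h_{\hat\mu}(P\vee L)=h_{\hat\mu}(L)+h_{\hat\mu}(P\mid L)\le \tth_\mu(T,\Phi)+H_{\hat\mu}(P\mid L_0).
\end{align*}
This uses the standard bound of the conditional process entropy by the one-step conditional entropy given the past-and-present labels.

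\textbf{Main obstacle.} The crux is to show that $H_{\hat\mu}(P\mid L_0)\to 0$ when $\Phi=\Phi_n$ with $\diam(\Phi_n)\to 0$. Given the label $i$, the point $x$ lies in $\supp\varphi_i$, a set of diameter at most $\delta_n$. Let $U_{\delta}$ be the $\delta$-neighbourhood of $\partial P$. If $\supp\varphi_i$ misses $U_{\delta_n}$, then the conditional law of $P$ given label $i$ is a Dirac mass and contributes nothing. The total weight of the remaining labels is at most $\mu(U_{2\delta_n})$, which tends to $\mu(\partial P)=0$. Each such label contributes at most $\log|P|$. Hence $H_{\hat\mu}(P\mid L_0)\le \mu(U_{2\delta_n})\log|P|\to0$.

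Therefore $\liminf_n \tth_\mu(T,\Phi_n)\ge h_\mu(T)-\ve$ for every $\ve>0$. Combined with the upper bound, this gives both $\tth_\mu(T)=h_\mu(T)$ and the maximizing property of the sequence $\Phi_n$.

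\textbf{Topological statement.} For $\tthtop(T)=\htop(T)$, I would show that $\tthtop(T,\Phi)\ge\sup_\mu\tth_\mu(T,\Phi)$ by a counting/concavity argument: a count of how many elements of $\Phi^n$ are needed to carry most of the mass dominates $\frac1nH_\mu(\Phi^n)$. I would also show $\tthtop(T,\Phi)\le\htop(T)$, by comparing $\Phi^n$ with a cover by the supports of $\Phi$, which dominates $\htop(T,\cU)$ for $\cU=\{\supp\varphi\}$. Both directions then follow from the metric case together with the classical variational principle. This last step depends on the exact form of Definition~\ref{def: topological entropy}, and I expect it to be routine once that form is fixed.
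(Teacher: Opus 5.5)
There is a genuine gap: the definition you wrote down drops a term. The paper's static entropy is $\ttH_\mu(\Phi)=\sum_{\varphi}-\mu(\varphi)\log\mu(\varphi)+\mu(\varphi\log\varphi)$, and you omitted $\mu(\varphi\log\varphi)$. In your labelling picture, with your $\eta(t)=-t\log t$, the paper's quantity is $\ttH_\mu(\Phi_0^{n-1})=H_{\hat\mu}(L_0^{n-1})-H_{\hat\mu}(L_0^{n-1}\mid X)$. Since the labels are conditionally independent given $x$, this gives $\tth_\mu(T,\Phi)=h_{\hat\mu}(L)-\int\sum_{\varphi\in\Phi}\eta\circ\varphi\intd\mu$, not $h_{\hat\mu}(L)$.

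Your two halves therefore concern different quantities, and neither choice makes both work as written.

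\textbf{Your quantity.} For the label entropy $h_{\hat\mu}(L)$ itself, the upper bound is false; the Ghys--Langevin--Walczak bound applies only to the corrected quantity. Take $T=\mathrm{id}$ on the circle with Lebesgue measure and tent functions $\varphi_j(x)=\max(0,1-N\,d(x,j/N))$, $0\le j<N$. Then $\diam(\Phi_N)\to0$, but $h_{\hat\mu}(L)=\int\sum_j\eta\circ\varphi_j\intd\mu=\frac12$ for every $N$, whereas $h_\mu(T)=0$. Your planned inequality $\tthtop(T,\Phi)\ge h_{\hat\mu}(L)$ also fails. With $T=\mathrm{id}$ and $\Phi=\{\frac12,\frac12\}$ one gets $h_{\hat\mu}(L)=\log2$, while $\ttH(\Phi_0^{n-1})=1$. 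In Proposition~\ref{prop: thmu leq thtop} it is precisely the bound $\mu(\varphi\log\varphi)\le\mu(\varphi)\log\sup\varphi$ on the correction term, fed into Lemma~\ref{lemma: classical}, that does the work.

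\textbf{The paper's quantity.} Your chain $h_\mu(T,P)\le h_{\hat\mu}(P\vee L)\le h_{\hat\mu}(L)+H_{\hat\mu}(P\mid L_0)$ only yields $\tth_\mu(T,\Phi_N)\ge h_\mu(T,P)-H_{\hat\mu}(P\mid L_0)-\int\sum_\varphi\eta\circ\varphi\intd\mu$. The last term need not vanish as the diameter shrinks: it equals $\frac12$ for every $N$ in the tent example.

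\textbf{The repair.} Sharpen your first inequality to $h_\mu(T,P)+H_{\hat\mu}(L_0\mid X)\le h_{\hat\mu}(P\vee L)$. This holds because $P_0^{n-1}$ is a function of $x$, so $H_{\hat\mu}(L_0^{n-1}\mid P_0^{n-1})\ge H_{\hat\mu}(L_0^{n-1}\mid X)=n\,H_{\hat\mu}(L_0\mid X)$. In the paper's notation this is $h_\mu(T,P)-\tth_\mu(T,\Phi_N)\le\ttH_\mu(\mathbbm{1}_P\mid\Phi_N)=\sum_\varphi\mu(\varphi)H_{\mu_\varphi}(P)$. It follows from Lemma~\ref{lemma: basic prop static} and is already used in the proof of Proposition~\ref{prop: thmu geq hmu}. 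With this, plus the upper bound for the correct quantity (Proposition~\ref{prop: thmu leq hmu}), your boundary estimate gives the vanishing-diameter statement directly. That is a shorter route than the paper's, which first reduces to positive partitions of unity and then shows $\ttH_\mu(\Phi\mid\Psi_k)\to0$.

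Two smaller repairs are also needed.
\begin{enumerate}
\item A support of diameter $<\delta$ that misses the $\delta$-neighbourhood of $\partial P$ can still meet two atoms in a general compact metric space. In $\{0\}\cup\{1/n\}$ with $P=\{A,A^c\}$ and $A=\{1/n: n\text{ even}\}$, one has $\partial P=\{0\}$, yet $1/n$ and $1/(n+1)$ are far closer to each other than to $0$. Replace $U_{2\delta}$ by $E_\delta=\{x: B(x,\delta)\not\subset P(x)\}$, where $P(x)$ is the atom containing $x$. These sets decrease, as $\delta\to0$, to a subset of $\partial P$, so $\mu(E_\delta)\to0$.
\item For $\tthtop(T,\Phi)\le\htop(T)$, the cover by supports is too coarse. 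On the full $2$-shift, $\Phi=\{(\mathbbm{1}_{[a]}+\delta)/(1+2\delta)\mid a=0,1\}$ has support cover $\{X\}$, yet $\tth(T,\Phi)=\log\frac{2+2\delta}{1+2\delta}>0$. As in Proposition~\ref{prop: thtop leq htop}, you need a cover $\cU$ on whose elements each $\varphi$ oscillates by less than $\ve/\#\Phi$, which gives $\tth(T,\Phi)\le h(T,\cU)+\log(1+\ve)$.
\end{enumerate}
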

A similar statement is obtained for the pressure of a continuous potential in Section~\ref{sect: pressure}. Theorem~\ref{main thm 1} follows from Corollary~\ref{corol: eq of defs metr and top} and Theorem~\ref{thm: vanish diam metric}.

In Section~\ref{sect: top tail and tail var pple}, we introduce a quantity $\tth^*(T)$ defined using continuous partitions of unity (Definition~\ref{def: top tail entropy V1}), and we prove that is coincides with Misiurewicz's topological tail entropy $h^*(T)$. Furthermore, we prove that for any sequence $\Psi_n$ with vanishing diameters, then the tail variational principle holds. That is

\begin{theorem}\label{main thm 2}
For any topological dynamical system $(X,T)$, then $\tth^*(T) = h^*(T)$. Furthermore, if $\Psi_k$ is a sequence of continuous partitions of unity with $\diam(\Psi_k)$ converging to zero, then
\begin{align*}
\lim_{k \to \infty} \sup_{\mu \in \cM(X,T)} h_\mu(T) - \tth_\mu(T,\Psi_k) = h^*(T).
\end{align*}
\end{theorem}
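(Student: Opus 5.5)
We will prove Theorem~\ref{main thm 2} by comparing continuous partitions of unity with Borel partitions, using Misiurewicz's definition of $h^*(T)$ through open covers together with the Downarowicz/Ledrappier tail variational principle
\[
h^*(T)=\lim_{k}\sup_{\mu}\bigl(h_\mu(T)-h_\mu(T,\cP_k)\bigr),
\]
valid for refining partitions $\cP_k$ with vanishing diameter and small boundaries. When $\htop(T)=\infty$ we expect both sides to be infinite, since $h^*$ is then computed through the same local quantities. We therefore concentrate on the finite entropy case, where Theorem~\ref{main thm 1} gives $\tth_\mu(T)=h_\mu(T)$.

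The first step is a two-sided comparison. If $\Phi$ is a continuous partition of unity subordinate to an open cover $\cU$, we want to show that $\tth(T,\Phi)$-type local quantities are controlled by $h(T\mid\cU)$-type quantities, and conversely. Concretely, we aim for the following inequalities.
\begin{enumerate}[(i)]
\item For any Borel partition $\cP$ finer than a cover $\cV$ and any $\Phi$ with $\diam(\Phi)$ below the Lebesgue number of $\cV$, we have $h_\mu(T,\cP)\le \tth_\mu(T,\Phi)+(\text{conditional term of }\cV)$. Equivalently, the defect $h_\mu - \tth_\mu(T,\Phi)$ is bounded by the tail entropy $h^*(T\mid \cV)$.
\item For any $\Phi$, the partition obtained by taking $\cP\succ\{\supp\varphi\}$ gives $\tth_\mu(T,\Phi)\le h_\mu(T,\cP)+C$, where the error $C$ is independent of $\mu$. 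The quantity $\tth$ is a Ghys--Langevin--Walczak entropy of a function family, so the concavity of $x\log x$ bounds it by the partition entropy of the support cover.
\end{enumerate}
Both estimates should follow from the earlier lemmas used for Theorem~\ref{main thm 1}: the comparison of $\tth_\mu(T,\Phi)$ with partitions refining the support cover, and with partitions on whose atoms $\Phi$ is nearly constant.

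Second, using (i) we show that for $\diam(\Psi_k)\to 0$,
\[
\limsup_k \sup_\mu\bigl(h_\mu-\tth_\mu(T,\Psi_k)\bigr)\le \lim_{\cV}h^*(T\mid\cV)=h^*(T).
\]
This step takes the defect bound in (i), applied with $\cV$ a cover of small diameter and $k$ large enough that $\diam(\Psi_k)$ lies below its Lebesgue number, and then lets $\cV$ refine.

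For the reverse inequality we invoke the Downarowicz tail variational principle:
\[
h^*(T)=\lim_{\cU}\sup_\mu\bigl(h_\mu - h_\mu(T\mid\cU)\bigr)
\]
over open covers, or equivalently over any entropy structure. Given $\Psi_k$, we choose a cover $\cU_k$ of the supports, whose diameter tends to $0$. The monotonicity coming from (ii) then gives
\[
h_\mu-\tth_\mu(T,\Psi_k)\ge h_\mu - h_\mu(T\mid \cU_k)-o(1)
\]
uniformly in $\mu$. Finally, $\tth^*(T)$ is defined as a $\lim/\inf$ over partitions of unity of a topological conditional quantity, so the same sandwich, applied to the topological conditional entropies, gives $\tth^*(T)=h^*(T)$. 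The argument also shows that the limit exists.

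The main obstacle is the uniformity of the additive errors in $\mu$. The Borel partition comparison produces boundary terms that depend on $\mu$. We therefore avoid fixed partitions and work with conditional entropy relative to open covers (Downarowicz's $h_\mu(T\mid\cU)$), whose comparison with $\Phi$ is uniform. We expect this to require a combinatorial lemma bounding $\tth$ of $\Phi^{(n)}$ by $\log$ of the number of elements of the $n$-th join of the support cover meeting a Bowen ball, which is where the main work lies.
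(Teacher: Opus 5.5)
Your upper bound follows the paper's idea: bound the defect by a conditional topological entropy relative to the support cover of $\Psi_k$, then use Misiurewicz's $\lim h(T\mid\cV)=h^*(T)$. However, the lemmas behind Theorem~\ref{main thm 1} do not supply that bound. The paper needs Lemma~\ref{lemma: classical} with the admissible weights $a_\psi=\mu(\psi)\in\cD(\Psi_0^{n-1})$ to get $\sup_\mu\bigl(\tth_\mu(T)-\tth_\mu(T,\Psi)\bigr)\le\tth(T\mid\Psi)$ (Proposition~\ref{prop: tail entropy bounds diff}). It then needs a counting argument with a cover on which $\Phi$ is almost constant to get $\tth(T\mid\Psi)\le h(T\mid\cV)$ (Proposition~\ref{prop: top tail entropy ineq}).

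The genuine gap is in the reverse inequality. Your step (ii) is false as justified: concavity of $x\log x$ does not bound $\tth_\mu(T,\Phi)$ by the entropy of a partition refining the support cover. For a positive partition of unity every support is $X$, so the trivial partition refines the support cover. Yet by the lemma opening Section~\ref{sect: vanish diam}, such $\Phi$ give $\tth_\mu(T,\Phi)$ arbitrarily close to $h_\mu(T)>0$. The problem persists for $\Psi_k$ of small diameter. If the atoms of $\cP$ are at the scale of the supports, the error $\ttH_\mu(\Psi_k\mid\mathbbm{1}_{\cP})$ need not tend to $0$, because each $\psi$ climbs from $0$ to its maximum inside its support. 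For tent functions and Lebesgue measure it is, by scaling, a positive constant independent of $k$. So your claimed bound $h_\mu-\tth_\mu(T,\Psi_k)\ge h_\mu-h_\mu(T\mid\cU_k)-o(1)$, uniform in $\mu$, has no proof.

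What is needed is a comparison at a much finer scale. Take a partition on whose atoms every $\psi\in\Psi_k$ oscillates by less than $\ve/\#\Psi_k$; as in Proposition~\ref{prop: thmu leq hmu}, this gives $\tth_\mu(T,\Psi_k)\le h_\mu(T,\cP)+\ve$ uniformly in $\mu$. But the reference sequence at that scale must then satisfy the tail variational principle, and nothing guarantees this for Borel partitions of $X$. Their candidate need not be an entropy structure (Downarowicz's counterexample behind Newhouse's question). Partitions whose boundaries are null for all invariant measures need not exist either, so the principle you open with is not available.

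The paper resolves exactly this as follows:
\begin{itemize}
\item it lifts to a zero-dimensional principal extension $\pi:(X',T')\to(X,T)$, where clopen partitions $\cP_n$ make $\mathbbm{1}_{\cP_n}$ continuous;
\item it chooses $n_k$ with $\tth_\nu(T',\pi\Psi_k)\le h_\nu(T',\cP_{n_k})+2^{-k}$ uniformly in $\nu$;
\item it uses Burguet's $h^*(T')=h^*(T)$ together with the tail variational principle for $(h_\nu(T',\cP_n))_n$; see \eqref{eq: reverse ineq}.
\end{itemize}
A cover-based entropy structure taken at the fine scale, such as Romagnoli's, could play the same role, provided you cite that it is one.

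This fine-scale lower bound holds for an arbitrary sequence $\Psi_k$, not only one with vanishing diameters, and that is what gives $\tth^*(T)\ge h^*(T)$. Combined with Proposition~\ref{prop: tail entropy bounds diff}, it yields $\tth(T\mid\Psi)\ge h^*(T)$ for every continuous $\Psi$. Your closing appeal to ``the same sandwich'' provides neither this lower bound nor any link between the weighted quantity $\ttH_n(\Phi\mid\Psi)$ and the measure-theoretic defects.
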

A similar result for topological tail pressure is obtained in Section~\ref{sect: top tail pressure}. Theorem~\ref{main thm 2} follows from Theorem~\ref{thm: tail var ppl} and Corollary~\ref{corollary: equiv top tail entropy}.

Section~\ref{sect: entropy structure} is devoted to the notion of entropy structures (Definition~\ref{def: entropy structure}). We derive two families of entropy structures using continuous partitions of unity. Furthermore, we extend the definition of \emph{candidates}, into \emph{weak-candidates}, in order to include in particular sequences of the form $\cH_\Psi = (\mu \mapsto \tth_\mu(T,\Psi_k))_{k \in \bN}$ with $\diam(\Psi_k)$ converging to zero which are not increasing but \emph{almost}. We extend the definition of uniform equivalence accordingly into \emph{weak uniform equivalence} (Definition~\ref{def: weak}). Finally, we prove that any weak-candidate weakly-uniformly equivalent to an entropy structure has the same transfinite sequence of functions, and the same super-envelops as any entropy structure. We show that the sequence $\cH_\Psi$ is such a weak-candidate. In particular, the symbolic extension entropy can be recovered from $\cH_\Psi$, as well as $h^*(T)$. We summarise all of the above into the following

\begin{theorem}\label{main thm 3}
For any topological dynamical system $(X,T)$, and any sequence $\Psi_k$ of continuous partitions of unity with $\diam(\Psi_k)$ converging to zero, then the sequence $\cH = (\mu \mapsto \tth_\mu(T,\Psi_k))_{k \in \bN}$ is a weak-candidate weakly-uniformly equivalent to any entropy structure. In particular, $\cH$ shares the same transfinite sequence of functions $(u_\alpha)_\alpha$ and super-envelops $E$ as any entropy structure. 
\end{theorem}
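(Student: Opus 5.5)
The plan is to compare $\cH$ with a known entropy structure, and to use the two defining properties of $\Psi_k$: $\diam(\Psi_k)\to 0$, and each $\varphi\in\Psi_k$ is continuous. I would take as reference the structure built by Downarowicz from finite families of continuous functions, or equivalently the Lindenstrauss-type structure $h(\mu,\cP_k)$ for partitions with small, essentially null boundaries; by \cite{downarowicz05} it is enough to compare with one structure $\cH'=(h'_k)$.

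The first step is to show that $\cH$ is a weak-candidate. Each $\mu\mapsto\tth_\mu(T,\Psi_k)$ is nonnegative and converges pointwise to $h_\mu(T)$ by Theorem~\ref{main thm 1}. It remains to prove almost monotonicity: for $k$ fixed and $\ve>0$ there is $l_0$ such that $\tth_\mu(T,\Psi_l)\ge \tth_\mu(T,\Psi_k)-\ve$ for all $l\ge l_0$ and all $\mu$. Since the $\Psi_k$ need not refine one another, this must come from a uniform comparison lemma. If $\diam(\Psi_l)$ is smaller than a Lebesgue-type number attached to $\Psi_k$ (built from the uniform continuity of the finitely many $\varphi\in\Psi_k$), then $\Psi_k$ is approximately measurable with respect to $\Psi_l$. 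By a conditional entropy estimate for partitions of unity, of the kind used to prove Theorem~\ref{thm: vanish diam metric}, this should give $\tth_\mu(T,\Psi_k)\le\tth_\mu(T,\Psi_l)+\ve$ uniformly in $\mu$.

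The second step is to prove weak uniform equivalence with $\cH'$, which has two directions.

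\emph{First direction.} For each $k$ and $\ve$ there should be $l$ with $h'_l\ge \tth(T,\Psi_k)-\ve$ uniformly. I would use a continuous partition-of-unity analogue of Downarowicz's uniform domination: for any continuous $\Psi$, the local entropy is dominated, up to $\ve$ and uniformly in $\mu$, by the entropy at a finer continuous scale. Concretely, approximate the family $\Psi_k$ by the entropy of Downarowicz's function-family structure at a sufficiently small scale, using that the $\varphi$'s are continuous and therefore uniformly approximable.

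\emph{Second direction.} For each $l$ and $\ve$ there should be $k$ with $\tth(T,\Psi_k)\ge h'_l-\ve$ uniformly. Here I would use the tail-type estimate underlying Theorem~\ref{main thm 2}, in the form of a conditional (defect) version: $h_\mu-\tth_\mu(T,\Psi_k)\le h_\mu(T\mid \Psi_k)$-type terms, together with the fact that $h'_l$ is bounded by entropy relative to a fixed finite scale. Both are controlled uniformly because $\diam(\Psi_k)\to0$.

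The final claim then follows formally. Once weak uniform equivalence is established, the transfinite sequence $u_\alpha$ and the super-envelopes are defined through upper semicontinuous envelopes of the differences $h-h_k$ and limits in $k$. These operations are insensitive to uniform $\ve$-perturbations and to passing to subsequences, so the argument for candidates in \cite{downarowicz05,BD04} carries over with $\ve$-losses that vanish in the limit. The main obstacle is the uniform (in $\mu$) comparison in the first step and in the first direction. Pointwise convergence alone does not suffice, so I expect to need an explicit estimate of $\tth_\mu(T,\Psi)-\tth_\mu(T,\Psi')$ in terms of the moduli of continuity of the elements of $\Psi$ and $\diam(\Psi')$, independent of $\mu$.
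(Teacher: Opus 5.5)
There is a genuine gap in your second direction. For fixed $l$ you need $h'_l\le\tth_\cdot(T,\Psi_k)+\ve$ uniformly in $\mu$, and the tool you name cannot give this. By Theorem~\ref{main thm 2}, $\sup_\mu\bigl(h_\mu(T)-\tth_\mu(T,\Psi_k)\bigr)\to h^*(T)$. This is positive precisely when entropy structures fail to converge uniformly. So the defect $h_\mu(T)-\tth_\mu(T,\Psi_k)$ is \emph{not} controlled by $\diam(\Psi_k)\to0$, and routing $h'_l-\tth_\cdot(T,\Psi_k)$ through it throws away exactly the non-uniformity at stake.

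What is needed is that the conditional entropy of a \emph{fixed coarse} reference scale given the \emph{fine} $\Psi_k$ tends to $0$ uniformly in $\mu$. Pointwise this follows from Theorem~\ref{main thm 1} and principality of the extension. Uniformity is the problem:
\begin{enumerate}
\item Against the clopen reference $h_\nu(T',\cA_l)$ on a zero-dimensional extension, no static estimate helps. The supports of the lifts $\pi\Psi_k$ contain whole fibres, so their diameters do not shrink.
\item The usual upgrade from pointwise to uniform, Dini's theorem for upper semicontinuous functions, needs monotonicity in $k$. A non-refining sequence $\Psi_k$ does not have it.
\item Against Downarowicz's function-family structure, you would need explicit $\mu$-uniform two-sided comparisons between his entropy of a function family and the Ghys--Langevin--Walczak quantity $\tth_\mu$. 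The proposal supplies none. Lindenstrauss-type partitions are also not available in general.
\end{enumerate}

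The paper resolves this with a sandwich. It passes to the refining joins $\Phi_k=\bigvee_{j\le k}\Psi_j$. For these, the maps $\nu\mapsto\tth_\nu(T',\mathbbm 1_{\cA_l}\mid\pi\Phi_k)$ are upper semicontinuous (Proposition~\ref{prop: upper semi-continuity}), nonincreasing in $k$, and tend to $0$ pointwise. Dini then makes $\cH^{p.u.}$ an entropy structure. Next, the family $\tth_\mu(T,\delta)=\inf\{\tth_\mu(T,\Phi)\mid\diam\Phi\le\delta\}$ is shown uniformly equivalent to $\cH^{p.u.}$ by a purely static estimate. Finally, $\tth_\mu(T,\diam\Psi_k)\le\tth_\mu(T,\Psi_k)\le\tth_\mu(T,\Phi_k)$ gives both dominations and almost-monotonicity at once.

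What you flag as the main obstacle is actually the easy part. Your Step~1 is correct: $\tth_\mu(T,\Psi_k)-\tth_\mu(T,\Psi_l)\le\ttH_\mu(\Psi_k\mid\Psi_l)$, and this is uniformly small by uniform continuity on supports of small diameter. Your first direction follows from the same static estimate, $\tth_\nu(T',\pi\Psi_k)\le h_\nu(T',\cA_n)+\ttH_\nu(\pi\Psi_k\mid\mathbbm 1_{\cA_n})$, as in the proof of Theorem~\ref{thm: tail var ppl}. Your last paragraph on $u_\alpha$ and superenvelopes is in line with Theorem~\ref{thm: same repair and superenvelops}.
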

It follows by combining Theorems~\ref{thm: same repair and superenvelops} and~\ref{thm: weak entropy structure}.

Finally, in Section~\ref{sect: open}, we mention some problems still open.

\section{Topological and metric entropies}\label{sect: def and equiv}

In this section we define quantities analogous to the usual metric and topological entropies in Definitions~\ref{def: metric entropy} and~\ref{def: topological entropy}, but using continuous partitions of unity. The metric entropy function is formally the same the one introduced by Ghys, Langevin and Walczak~\cite{GLW86} initially for measurable partitions of unity. We then prove that these quantities coincide with their classical counterparts in Corollary~\ref{corol: eq of defs metr and top}. Finally, we prove that the local metric entropy function is upper semi-continuous in Proposition~\ref{prop: upper semi-continuity}.

\subsection{Metric entropy via partitions of unity}

Given partitions over a set and a self-map over this same set, we can define the join of partitions as well as the pullback of a partition. These definitions naturally extend to partitions of unity as follows.  

\begin{definition}
A finite collection of function $\Phi = \{ \varphi_i : X \to [0,1] \mid 1 \leqslant i \leqslant n \}$ such that $\sum_{i=1}^n \varphi_i \equiv 1$ is called a partition of unity (over $X$). If all functions in $\Phi$ are measurable (resp. continuous), the partition of unity is said measurable (resp. continuous). \\
If $\Phi$ and $\Psi$ are two partition of unity, their join is defined by
\begin{align*}
\Phi \vee \Psi \coloneqq \{ \varphi \psi \mid \varphi \in \Phi, \, \psi \in \Psi \}.
\end{align*}
A self map $T : X \to X$ acts on partitions of unity as follows: if $\Phi$ is a partition of unity, then
\begin{align*}
T \Phi \coloneqq \{ \varphi \circ T \mid \varphi \in \Phi \},
\end{align*}
is also a partition of unity. \\
We also define for any $n \geqslant 1$, the partition of unity
\begin{align*}
\Phi_0^{n-1} \coloneqq \bigvee_{i=0}^{n-1} T^i \Phi.
\end{align*}
\end{definition}

In \cite{GLW86}, Ghys, Langevin and Walczak defined the metric entropy of a continuous self-map using measurable partitions of unity, and proved that this new quantity coincides with the classical one. We recall their definition here, restricted to the case of continuous partitions of unity,

\begin{definition}[metric entropy]\label{def: metric entropy}
Given $\mu \in \cM(X,T)$ and a finite and measurable partition of unity $\Phi$, the static entropy of $(T,\mu)$ with respect to $\Phi$ is given by
\begin{align*}
\ttH_\mu(\Phi) \coloneqq \sum_{\varphi \in \Phi} - \mu(\varphi) \log \mu(\varphi) + \mu(\varphi \log \varphi).
\end{align*}
The (local) entropy of $(T,\mu)$ with respect to $\Phi$ is defined by
\begin{align*}
\tth_{\mu}(T,\Phi) \coloneqq \lim_{n \to \infty} \frac{1}{n} \ttH_{\mu}(\Phi_0^{n-1}).
\end{align*}
The entropy of $(T,\mu)$ is defined by
\begin{align*}
\tth_{\mu}(T) \coloneqq \sup \{ \tth_{\mu}(T,\Phi) \mid \Phi \text{ continuous partition of unity} \}.
\end{align*}
\end{definition}

Notice that for a measurable partition $\cA$ of $X$, and $\mathbbm{1}_\cA = \{ \mathbbm{1}_A \mid A \in \cA \}$, we recover the classical definitions of the static entropy $H_\mu(\cA) = \ttH_\mu(\mathbbm{1}_\cA)$, and of the local entropy $h_\mu(T,\cA) = \tth_\mu(T,\mathbbm{1}_\cA)$.

In the rest of the paper, all quantities (entropies, pressures, etc.) defined using partitions of unity will be denoted with a \emph{tilde} in order to distinguish them from their classical counterparts.

In order to prove that such entropy is well defined and several other properties, Ghys, Langevin and Walczak \cite{GLW86} also defined a conditional metric entropy using (measurable) partitions of unity. We recall here their definition.

\begin{definition}
For a given probability measure $\mu$ and a measurable function $ \psi \geqslant 0$ such that $\mu(\psi) >0$, we define the conditional measure $\mu_\psi$ of $\mu$ with respect to $\psi$ by $\mu_\psi(\varphi) \coloneqq \mu(\varphi \psi) / \mu(\psi)$. Notice that $\mu_\psi$ is a probability measure.

For $\Psi$ and $\Phi$ two (measurable) partitions of unity, denote the conditional static entropy of $(T,\mu)$ with respect to $\Phi$ knowing $\Psi$
\begin{align*}
 \ttH_\mu(\Phi | \Psi) \coloneqq \sum_{\psi \in \Psi} \mu(\psi) \ttH_{\mu_\psi}(\Phi),
\end{align*}
where, as a convention, we set $\mu(\psi) \ttH_{\mu_\psi}(\Phi) = 0$ whenever $\mu(\psi) =0$.

Define the conditional entropy of $(T,\mu)$ with respect to $\Phi$ knowing $\Psi$ by
\begin{align*}
\tth_\mu(T,\Phi | \Psi) \coloneqq \lim_{n \to \infty} \frac{1}{n} \ttH_\mu(\Phi_0^{n-1} | \Psi_0^{n-1})
\end{align*}
\end{definition}

The existence of the limits in the definitions of entropy and conditional entropy follows from the next two lemmas. They are part of \cite{GLW86}, with different notations from here. We repeat their proof for completeness.

\begin{lemma}\label{lemma: basic prop static}
Given partitions of unity $\Phi$ and $\Psi$, and a $T$-invariant measure $\mu$, the following hold

\noindent
\textit{i)} $\ttH_{\mu}(\Phi \vee \Psi) = \ttH_{\mu}(\Psi) + \ttH_{\mu}(\Phi | \Psi)$,

\noindent
\textit{ii)} $\ttH_{\mu}(\Phi_1 \vee \Phi_2 | \Psi) = \ttH_{\mu}(\Phi_1 | \Psi) + \ttH_{\mu}(\Phi_2 | \Phi_1 \vee \Psi)$,

\noindent
\textit{iii)} $\ttH_{\mu}(\Phi | \Psi_1 \vee \Psi_2) \leqslant \ttH_{\mu}(\Phi | \Psi_1)$,

\noindent
\textit{iv)} $\ttH_{\mu}(\Phi) \geqslant 0$ and $\ttH_{\mu}(\Phi | \Psi) \geqslant 0$.
\end{lemma}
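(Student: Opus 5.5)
The plan is to prove all four items by direct computation from the definitions, using three elementary facts: the normalisations $\sum_{\varphi\in\Phi}\varphi\equiv 1$ and $\sum_{\psi\in\Psi}\psi\equiv 1$; the identity $\log(\varphi\psi)=\log\varphi+\log\psi$ (with the convention $0\log 0=0$); and convexity of $f(t)=t\log t$. Invariance of $\mu$ is never used, so each item will hold for an arbitrary Borel probability measure. This matters for (ii) and (iii), where we apply the static statements to the conditional measures $\mu_\psi$, which are not $T$-invariant in general.

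For \textit{i)}, expand
\begin{align*}
\ttH_\mu(\Phi\vee\Psi)=\sum_{\varphi,\psi} -\mu(\varphi\psi)\log\mu(\varphi\psi)+\mu(\varphi\psi\log\varphi)+\mu(\varphi\psi\log\psi).
\end{align*}
For $\mu(\psi)>0$, write $\log\mu(\varphi\psi)=\log\mu_\psi(\varphi)+\log\mu(\psi)$. Summing over $\varphi$ and using $\sum_\varphi\varphi\equiv1$, the terms $-\mu(\varphi\psi)\log\mu(\psi)$ and $\mu(\varphi\psi\log\psi)$ collapse to $-\mu(\psi)\log\mu(\psi)+\mu(\psi\log\psi)$. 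Summed over $\psi$, these give $\ttH_\mu(\Psi)$. The remaining terms are
\begin{align*}
\mu(\psi)\sum_\varphi\bigl(-\mu_\psi(\varphi)\log\mu_\psi(\varphi)+\mu_\psi(\varphi\log\varphi)\bigr)=\mu(\psi)\ttH_{\mu_\psi}(\Phi),
\end{align*}
and summed over $\psi$ they give $\ttH_\mu(\Phi|\Psi)$. If $\mu(\psi)=0$, then $\varphi\psi=0$ $\mu$-a.e., so all corresponding terms vanish, consistent with the convention.

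For \textit{ii)}, apply \textit{i)} to each probability $\mu_\psi$ with $\mu(\psi)>0$:
\begin{align*}
\ttH_{\mu_\psi}(\Phi_1\vee\Phi_2)=\ttH_{\mu_\psi}(\Phi_1)+\ttH_{\mu_\psi}(\Phi_2|\Phi_1).
\end{align*}
Multiply by $\mu(\psi)$ and sum over $\psi$. The identities $(\mu_\psi)_{\varphi_1}=\mu_{\varphi_1\psi}$ and $\mu(\psi)\mu_\psi(\varphi_1)=\mu(\varphi_1\psi)$ turn the last term into $\sum_{\varphi_1,\psi}\mu(\varphi_1\psi)\ttH_{\mu_{\varphi_1\psi}}(\Phi_2)=\ttH_\mu(\Phi_2|\Phi_1\vee\Psi)$.

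For \textit{iv)}, regroup $\ttH_\mu(\Phi)=\sum_\varphi\bigl(\mu(f(\varphi))-f(\mu(\varphi))\bigr)$. Each summand is nonnegative by Jensen's inequality for the convex $f$. The conditional version is then a nonnegative combination of such quantities.

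For \textit{iii)}, the key observation is that $\nu\mapsto\ttH_\nu(\Phi)$ is concave on probability measures. Indeed, $\sum_\varphi-\nu(\varphi)\log\nu(\varphi)$ is a concave function of the linear quantities $\nu(\varphi)$, and $\sum_\varphi\nu(\varphi\log\varphi)$ is linear in $\nu$. Fix $\psi_1$ with $\mu(\psi_1)>0$. Using $\sum_{\psi_2}\psi_2\equiv1$ gives the convex decomposition
\begin{align*}
\mu_{\psi_1}=\sum_{\psi_2:\,\mu(\psi_1\psi_2)>0}\mu_{\psi_1}(\psi_2)\,\mu_{\psi_1\psi_2}.
\end{align*}
Concavity then yields $\mu(\psi_1)\ttH_{\mu_{\psi_1}}(\Phi)\geqslant\sum_{\psi_2}\mu(\psi_1\psi_2)\ttH_{\mu_{\psi_1\psi_2}}(\Phi)$, and summing over $\psi_1$ gives \textit{iii)}.

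The only delicate points are bookkeeping. One must consistently apply the zero-mass conventions and $0\log0=0$, and check that $\mu(\varphi\log\varphi)$ is finite, which holds because $t\log t$ is bounded on $[0,1]$. The step I expect to be the main obstacle is \textit{iii)}, namely establishing concavity and the convex decomposition cleanly; everything else is algebra.
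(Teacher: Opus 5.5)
Your proof is correct and follows essentially the paper's route: (i) by direct expansion, (iii) via convexity of $t\log t$ applied to the decomposition $\mu_{\psi_1}=\sum_{\psi_2}\mu_{\psi_1}(\psi_2)\,\mu_{\psi_1\psi_2}$ (which you package as concavity of $\nu\mapsto\ttH_\nu(\Phi)$), and (iv) by Jensen. The only difference is in (ii): the paper applies (i) three times to $\mu$ itself, adding and subtracting $\ttH_\mu(\Phi_1\vee\Psi)$, whereas you apply (i) under each $\mu_\psi$ and reassemble using $(\mu_\psi)_{\varphi_1}=\mu_{\varphi_1\psi}$. This is equally valid because, as you note, (i) never uses invariance of the measure.
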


Contrary to the case of partitions where $\cA \vee \cA = \cA$, for partitions of unity $\Phi \vee \Phi \neq \Phi$ in general. As a consequence, $\ttH_\mu(\Phi | \Phi) \neq 0$ in general. The $\Phi$ for which equality holds have been characterised by Kaminski and Sam Lazaro \cite{KSL00}: each $\varphi \in \Phi$ is a multiple of some indicator function.

\begin{proof}
For point $i)$, we expand the definition as follows
\begin{align*}
\ttH_{\mu}&(\Phi | \Psi) = \sum_{\psi \in \Psi} \mu(\psi) \ttH_{\mu_\psi} = \sum_{\psi \in \Psi} \mu(\psi) \sum_{\varphi \in \Phi} - \frac{\mu(\varphi \psi)}{\mu(\psi)} \log \frac{\mu(\varphi \psi)}{\mu(\psi)} + \frac{\mu(\varphi \psi \log \varphi)}{\mu(\psi)} \\
&=\sum_{\psi \in \Psi} \sum_{\varphi \in \Phi} -\mu(\varphi \psi) \log \mu(\varphi \psi) + \mu(\varphi \psi) \log \mu(\psi) + \mu(\varphi \psi \log \varphi) + \mu(\varphi \psi \log \psi) - \mu(\varphi \psi \log \psi) \\
&=\sum_{\varphi \psi \in \Phi \vee \Psi} -\mu(\varphi \psi) \log \mu(\varphi \psi) + \mu(\varphi \psi \log \varphi \psi) - \sum_{\psi \in \Psi} -\mu(\psi) \log \mu(\psi) + \mu(\psi \log \psi) \\
&= \ttH_{\mu}(\Phi \vee \Psi) - \ttH_{\mu}(\Psi). 
\end{align*}

In order to prove $ii)$, we use $i)$ three times:
\begin{align*}
\ttH_{\mu}(\Phi_1 \vee \Phi_2 | \Psi) &= \ttH_{\mu}(\Psi_1 \vee \Phi_2 \vee \Psi) - \ttH_{\mu}(\Psi) + \ttH_{\mu}(\Phi_1 \vee \Psi) - \ttH_{\mu}(\Phi_1 \vee \Psi) \\
&= \ttH_{\mu}(\Phi_2 | \Phi_1 \vee \Psi) + \ttH_{\mu}(\Phi_1 | \Psi).
\end{align*}

For point $iii)$, we use that $\eta(t) = t \log t$, $ t \in [0,1]$ is convex. Indeed, for any $\varphi \in \Phi$ and $\psi_1 \in \Psi_1$,
\begin{align*}
\frac{\mu( \varphi \psi_1)}{\mu(\psi_1)} \log \frac{\mu( \varphi \psi_1)}{\mu(\psi_1)} 
&= \eta \left( \frac{\mu( \varphi \psi_1)}{\mu(\psi_1)} \right) 
= \eta \left( \sum_{\psi_2 \in \Psi_2} \frac{\mu(\psi_1 \psi_2)}{\mu(\psi_1)} \frac{\mu(\varphi \psi_1 \psi_2)}{\mu(\psi_1 \psi_2)} \right) \\
&\leqslant \sum_{\psi_2 \in \Psi_2} \frac{\mu(\psi_1 \psi_2)}{\mu(\psi_1)} \eta \left( \frac{\mu(\varphi \psi_1 \psi_2)}{\mu(\psi_1 \psi_2)} \right).
\end{align*}
Thus,
\begin{align*}
- \ttH_{\mu}(\Phi |\Psi_1) 
&= \sum_{\psi_1 \in \Psi_1} \mu(\psi_1) \left( \sum_{\varphi \in \Phi} \eta \left( \frac{\mu( \varphi \psi_1)}{\mu(\psi_1)} \right) - \frac{\mu( \varphi \psi_1 \log \varphi)}{\mu(\psi_1)} \right) \\
&\leqslant \sum_{\psi_1 \in \Psi_1} \sum_{\varphi \in \Phi} \sum_{\psi_2 \in \Psi_2} \mu(\psi_1 \psi_2) \left( \eta(\mu_{\psi_1 \psi_2}(\varphi)) - \mu_{\psi_1 \psi_2}(\varphi \log \varphi) \right) \\
&\leqslant - \ttH_{\mu}(\Phi | \Psi_1 \vee \Psi_2).
\end{align*}

Point $iv)$ is an application of the Jensen inequality. Indeed, since $\eta$ is convex, for any probability measure $\mu$, and function $\varphi$ taking values in $[0,1]$, we have $\eta(\mu(\varphi)) \leqslant \mu(\eta \circ \varphi)$. In particular, for any partitions of unity $\Phi$ and $\Psi$, $\ttH_{\mu_{\psi}}(\Phi) \geqslant 0$ for each $\psi \in \Psi$, and therefore $\ttH_\mu(\Phi |\Psi) \geqslant 0$. Taking $\Psi = \{ 1 \}$, yields $\ttH_\mu(\Phi) \geqslant 0$.  
\end{proof}

\begin{lemma}
The sequences $\left( \ttH_{\mu}(\Phi_0^{n-1}) \right)_n$ and $\left( \ttH_{\mu}(\Phi_0^{n-1} | \Psi_0^{n-1}) \right)_n$ are sub-additive. In particular, the metric entropy and conditional entropy are well defined. 
\end{lemma}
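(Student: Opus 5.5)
The plan is to derive subadditivity from Lemma~\ref{lemma: basic prop static} together with the $T$-invariance of $\mu$, and then apply Fekete's lemma. The first ingredient is invariance of the static quantities under pullback: for any partitions of unity $\Phi,\Psi$ we want
\begin{align*}
\ttH_\mu(T\Phi) = \ttH_\mu(\Phi), \qquad \ttH_\mu(T\Phi | T\Psi) = \ttH_\mu(\Phi|\Psi).
\end{align*}
Both follow directly from the definitions. Each term $\mu(\varphi\circ T)$, $\mu((\varphi\log\varphi)\circ T)$ and $\mu((\varphi\psi)\circ T)$ equals its un-pulled counterpart because $\mu\circ T^{-1}=\mu$. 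Moreover, $T(\Phi\vee\Psi) = T\Phi\vee T\Psi$, since the product of compositions is the composition of the product. Iterating gives $T^m(\Phi_0^{n-1}) = \bigvee_{i=m}^{m+n-1}T^i\Phi$.

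For the unconditional sequence, write $\Phi_0^{m+n-1} = \Phi_0^{m-1}\vee T^m\Phi_0^{n-1}$; this holds because joins are associative and commutative as multisets of products. Then:
\begin{itemize}
\item Lemma~\ref{lemma: basic prop static} i) gives $\ttH_\mu(\Phi_0^{m+n-1}) = \ttH_\mu(\Phi_0^{m-1}) + \ttH_\mu(T^m\Phi_0^{n-1} \mid \Phi_0^{m-1})$.
\item Point iii), applied with $\Psi_1$ trivial ($\Psi_1=\{1\}$), bounds the conditional term by $\ttH_\mu(T^m\Phi_0^{n-1})$. One checks that $\ttH_\mu(\cdot\mid\{1\})=\ttH_\mu(\cdot)$ and that $\{1\}\vee\Psi=\Psi$.
\item By invariance, $\ttH_\mu(T^m\Phi_0^{n-1}) = \ttH_\mu(\Phi_0^{n-1})$.
\end{itemize}
This yields subadditivity.

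For the conditional sequence, set $a_n=\ttH_\mu(\Phi_0^{n-1}|\Psi_0^{n-1})$. Write $A=\Phi_0^{m-1}$, $B=T^m\Phi_0^{n-1}$, $C=\Psi_0^{m-1}$, $D=T^m\Psi_0^{n-1}$. Then:
\begin{itemize}
\item Point ii) gives $\ttH_\mu(A\vee B\mid C\vee D) = \ttH_\mu(A\mid C\vee D) + \ttH_\mu(B\mid A\vee C\vee D)$.
\item By iii), the first term is at most $\ttH_\mu(A|C)=a_m$.
\item By iii) again, the second term is at most $\ttH_\mu(B|D)$, which equals $a_n$ by conditional invariance.
\end{itemize}
In using iii) we need that conditioning on a join $\Psi_1\vee\Psi_2$ in any order is allowed; this holds since the join is symmetric.

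Finally, nonnegativity from iv) gives $a_n\ge0$, so Fekete's lemma shows that $\frac1n a_n$ converges to $\inf_n \frac1n a_n\in[0,\infty)$. The same applies to the unconditional sequence, so both limits in the definitions exist and are finite. The only delicate point, which I would check carefully, is the convention for $\mu(\psi)=0$ in the conditional entropy when applying iii). Terms with $\mu(\psi_1)=0$ force $\mu(\psi_1\psi_2)=0$, so they vanish consistently. Apart from this, the argument is routine bookkeeping.
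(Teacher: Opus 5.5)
Your proposal is correct and follows the paper's argument: for the conditional sequence you use the chain rule (ii), then (iii) twice, then $T$-invariance, exactly as the paper does. The differences are cosmetic. You treat the unconditional sequence directly via (i) and (iii) with $\Psi_1=\{1\}$, whereas the paper obtains it as the special case $\Psi=\{1\}$ of the conditional one. You also make explicit the invariance $\ttH_\mu(T\Phi\mid T\Psi)=\ttH_\mu(\Phi\mid\Psi)$ and the appeal to Fekete's lemma using nonnegativity from (iv), both of which the paper leaves implicit.
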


\begin{proof}
By using Lemma~\ref{lemma: basic prop static}, we get that for any $n$ and $m$ larger than $1$,
\begin{align*}
\ttH_{\mu}(\Phi_0^{n+m-1} | \Psi_0^{n+m-1}) 
&= \ttH_{\mu}(\Phi_0^{n-1} \vee T^n \Phi_0^{m-1} | \Psi_0^{n-1} \vee T^n \Psi_0^{m-1}) \\
&= \ttH_{\mu}(\Phi_0^{n-1} | \Psi_0^{n-1} \vee T^n \Psi_0^{m-1}) + \ttH_{\mu}( T^n \Phi_0^{m-1} | \Phi_0^{n-1} \vee \Psi_0^{n-1} \vee T^n \Psi_0^{m-1}) \\
&\leqslant \ttH_{\mu}(\Phi_0^{n-1} | \Psi_0^{n-1}) + \ttH_{\mu}( T^n \Phi_0^{m-1} | T^n \Psi_0^{m-1}).
\end{align*}
Now, by $T$-invariance of $\mu$, we get that 
\begin{align*}
\ttH_{\mu}( T^n \Phi_0^{m-1} | T^n \Psi_0^{m-1}) = \ttH_{\mu}( \Phi_0^{m-1} | \Psi_0^{m-1}),
\end{align*}
thus the sub-additivity of $\left( \ttH_{\mu}(\Phi_0^{n-1} | \Psi_0^{n-1}) \right)_n$. 

Notice that in the special case $\Psi = \{ 1 \}$, we get $\ttH_{\mu}(\Phi_0^{n-1} | \Psi_0^{n-1}) = \ttH_{\mu}(\Phi_0^{n-1} )$ for every $n$. Hence $\left( \ttH_{\mu}(\Phi_0^{n-1}) \right)_n$ is also sub-additive.
\end{proof}

\subsection{Topological entropy via continuous partitions of unity}

We introduce now a counterpart of the topological entropy defined using continuous partition of unity. The next definition is inspired from the work of Langevin and Walczak \cite{LW94}.

\begin{definition}[topological entropy]\label{def: topological entropy}
Given a finite partition of unity $\Phi$, its topological static entropy is given by
\begin{align*}
\ttH(\Phi) \coloneqq \sum_{\varphi \in \Phi} \sup \varphi.
\end{align*}
The (local) topological entropy of $T$ with respect to $\Phi$ is defined by
\begin{align*}
\tth(T,\Phi) \coloneqq \lim_{n \to \infty} \frac{1}{n} \log \ttH(\Phi_0^{n-1}).
\end{align*}
The topological entropy of $T$ is defined by
\begin{align*}
\tthtop(T) \coloneqq \sup \{ \tth(T,\Phi) \mid \Phi \text{ continuous partition of unity} \}.
\end{align*}
\end{definition}

As in the case of metric entropy, the existence of the limit in the definition is ensured by the following lemma. 

\begin{lemma}
The sequence $\left( \log \ttH(\Phi_0^{n-1}) \right)_n$ is sub-additive. In particular the topological entropy is well defined.
\end{lemma}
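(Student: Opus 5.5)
The plan is to prove the submultiplicativity
\begin{align*}
\ttH(\Phi_0^{n+m-1}) \leqslant \ttH(\Phi_0^{n-1}) \, \ttH(\Phi_0^{m-1}),
\end{align*}
take logarithms, and conclude with Fekete's lemma.

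\textbf{Step 1: write the join as products.} By definition of the join and of the action of $T$,
\begin{align*}
\Phi_0^{n+m-1} = \Phi_0^{n-1} \vee T^n \Phi_0^{m-1}.
\end{align*}
So every element of $\Phi_0^{n+m-1}$ has the form $\varphi \cdot (\psi \circ T^n)$ with $\varphi \in \Phi_0^{n-1}$ and $\psi \in \Phi_0^{m-1}$. We treat $\Phi_0^{n-1}$ as the indexed family of products $\varphi_{i_0}(\varphi_{i_1}\circ T)\cdots$, counted with multiplicity. This makes the sums over the join factor exactly, with no issue from coinciding products.

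\textbf{Step 2: split the sup.} Each function involved is nonnegative. Hence
\begin{align*}
\sup \bigl(\varphi \cdot (\psi\circ T^n)\bigr) \leqslant \sup \varphi \cdot \sup (\psi \circ T^n) \leqslant \sup\varphi \cdot \sup \psi .
\end{align*}
The last inequality holds trivially, with equality when $T$ is surjective.

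\textbf{Step 3: sum.} Summing over all pairs $(\varphi,\psi)$ factors the double sum and gives the desired inequality.

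\textbf{Step 4: conclude.} Each $\ttH(\Phi_0^{n-1}) \geqslant \sup \sum \varphi = 1$, because the elements form a partition of unity and so their sum is identically $1$. Therefore $\log \ttH(\Phi_0^{n-1})$ is a well-defined nonnegative real number. Taking logarithms in Step 3 shows this sequence is subadditive. Fekete's lemma then gives that $\frac1n \log\ttH(\Phi_0^{n-1})$ converges to its infimum, which is finite and nonnegative. Hence $\tth(T,\Phi)$ exists, and $\tthtop(T)$ is a well-defined element of $[0,\infty]$.

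The main obstacle is only the bookkeeping in Step 1. The join $\Phi_0^{n-1}\vee T^n\Phi_0^{m-1}$ must be identified with $\Phi_0^{n+m-1}$ as a multiset. This identification uses commutativity of multiplication and $T^i(T^n\Phi)=T^{i+n}\Phi$.
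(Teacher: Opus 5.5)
Your proposal is correct and matches the paper's proof: bound the sup of each product $\varphi_1\,(\varphi_2\circ T^n)$ in $\Phi_0^{n-1}\vee T^n\Phi_0^{m-1}$ by the product of sups, which makes $\ttH(\Phi_0^{n-1})$ submultiplicative, and then take logarithms. Your extra remarks only make explicit what the paper leaves implicit: indexing the join with multiplicity, noting $\ttH(\Phi_0^{n-1})\geqslant 1$ so the Fekete limit is finite and nonnegative, and using only $\sup(\psi\circ T^n)\leqslant\sup\psi$ rather than surjectivity.
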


\begin{proof}
For any $n$ and $m$ larger than $1$, we have that
\begin{align*}
\ttH(\Phi_0^{n+m-1}) &= \sum_{\varphi_1 \varphi_2 \circ T^n \in \Phi_0^{n-1} \vee T^n \Phi_0^{m-1}} \sup \varphi_1 \varphi_2 \circ T^n \\
&\leqslant \sum_{\varphi_1 \in \Phi_0^{n-1}} \sup \varphi_1  \sum_{\varphi_2 \in \Phi_0^{m-1}} \sup \varphi_2 \circ T^n = \ttH(\Phi_0^{n-1}) \ttH(\Phi_0^{m-1}).
\end{align*}
Taking logarithm of the above yields the claim.
\end{proof}

In this context, we can prove an analogue of Goodwyn's theorem \cite{goodwyn69}. This relies on the classical lemma we omit the proof of (see \cite[Lemma~9.9]{walters82book}).
\begin{lemma}\label{lemma: classical}
Let $a_1, \ldots , a_n$, and $p_1, \ldots, p_n$ be real numbers such that $p_i \geqslant 0$ and $\sum_{i=1}^n p_i = 1$, then
\begin{align*}
\sum_{i=1}^n p_i( a_i - \log p_i) \leqslant \log \sum_{i=1}^n e^{a_i}.
\end{align*}
Furthermore, equality holds if and only if $p_i = e^{a_i}/M$ for every $i$, where $M = \sum_{i=1}^n e^{a_i}$.
\end{lemma}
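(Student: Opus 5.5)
The plan is to reduce the inequality to Jensen's inequality for the concave function $\log$. It is cleanest to give the key computation first and then read off the equality case. Discard first the indices with $p_i = 0$; by the convention $0 \log 0 = 0$ they contribute nothing to the left-hand side, and they only enlarge the right-hand side. Let $I = \{ i \mid p_i > 0 \}$. Then
\begin{align*}
\sum_{i \in I} p_i (a_i - \log p_i) = \sum_{i \in I} p_i \log \frac{e^{a_i}}{p_i} \leqslant \log \sum_{i \in I} p_i \frac{e^{a_i}}{p_i} = \log \sum_{i \in I} e^{a_i} \leqslant \log \sum_{i=1}^n e^{a_i}.
\end{align*}
The first inequality is Jensen's inequality for $\log$ with the probability weights $(p_i)_{i \in I}$. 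The second holds because $\log$ is increasing and each $e^{a_i}$ is positive.

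For the equality case, let $M = \sum_{i=1}^n e^{a_i}$. Suppose equality holds throughout. Then the last inequality is an equality, which forces $\sum_{i \notin I} e^{a_i} = 0$. Since every $e^{a_i} > 0$, this means $I = \{1, \ldots, n\}$. Next, strict concavity of $\log$ makes equality in Jensen's inequality occur only when $e^{a_i}/p_i$ takes the same value $c$ for every $i$. Summing $p_i = e^{a_i}/c$ over $i$ gives $c = M$, so $p_i = e^{a_i}/M$. Conversely, if $p_i = e^{a_i}/M$ for every $i$, then $a_i - \log p_i = \log M$ for every $i$. The left-hand side is then $\sum_i p_i \log M = \log M$, which is the right-hand side.

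No step here is a genuine obstacle. The only point that needs care is the bookkeeping for indices with $p_i = 0$: they must be removed before applying Jensen's inequality, and the equality case must then rule them out. That is done by the positivity of the exponential.
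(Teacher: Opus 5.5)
Your proof is correct, including the treatment of the indices with $p_i=0$ and both directions of the equality case. The paper does not prove this lemma; it refers to Walters' Lemma~9.9. The standard argument there applies Jensen's inequality with the roles of the data reversed. With $M=\sum_i e^{a_i}$, it takes the strictly convex function $x\log x$, the weights $e^{a_i}/M$, and the points $p_iM/e^{a_i}$:
\[
\sum_{i=1}^n \frac{e^{a_i}}{M}\,\frac{p_iM}{e^{a_i}}\log\frac{p_iM}{e^{a_i}} \geqslant \Bigl(\sum_{i=1}^n p_i\Bigr)\log\Bigl(\sum_{i=1}^n p_i\Bigr) = 0 .
\]
This rearranges to $\sum_i p_i(a_i-\log p_i)\leqslant \log M$.

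The weights $e^{a_i}/M$ are all strictly positive, and $x\log x$ extends continuously to $x=0$. So no index has to be discarded. Equality in the strict Jensen inequality forces $p_iM/e^{a_i}$ to be constant, and since $\sum_i p_i = 1$ that constant is $1$. This gives the equality case in one step.

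Your choice of weights $p_i$ with the concave $\log$ is just as short. The cost is exactly the bookkeeping you identified: you must drop the zero-weight indices before applying Jensen, and you need the extra monotonicity step to exclude them in the equality case. You handle both correctly. The two arguments are really the same fact, namely that the relative entropy of $(p_i)$ with respect to $(e^{a_i}/M)$ is nonnegative and vanishes only when the two distributions coincide.
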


\begin{proposition}\label{prop: thmu leq thtop}
For every $T$-invariant measure $\mu$ and continuous partition of unity $\Phi$, 
\begin{align*}
\ttH_{\mu}(\Phi) \leqslant \log \ttH(\Phi).
\end{align*}
As a consequence, $\tth_{\mu}(T,\Phi) \leqslant \tthtop(T,\Phi)$ and $\tth_{\mu}(T) \leqslant \tthtop(T)$.
\end{proposition}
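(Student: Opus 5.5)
The plan is to bound $\ttH_\mu(\Phi)$ by a convexity argument that reduces to Lemma~\ref{lemma: classical}. First split $\ttH_\mu(\Phi)$ into its two kinds of terms. Since $\varphi \leqslant \sup \varphi$ pointwise, we have $\mu(\varphi \log \varphi) \leqslant \mu(\varphi) \log \sup \varphi$ for each $\varphi \in \Phi$. Terms with $\mu(\varphi) = 0$ contribute zero, under the convention $0 \log 0 = 0$ and because $\varphi \log \varphi = 0$ $\mu$-a.e. when $\mu(\varphi)=0$. For the remaining $\varphi$ we have $\sup \varphi > 0$, so $a_\varphi = \log \sup \varphi$ is a finite real number. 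This gives
\begin{align*}
\ttH_\mu(\Phi) \leqslant \sum_{\varphi \in \Phi,\, \mu(\varphi)>0} \mu(\varphi)\bigl( \log \sup \varphi - \log \mu(\varphi) \bigr).
\end{align*}

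Next, set $p_\varphi = \mu(\varphi)$. These are nonnegative and sum to $\mu(\sum_{\varphi \in \Phi} \varphi) = \mu(1) = 1$, so Lemma~\ref{lemma: classical} applies with $a_\varphi = \log \sup\varphi$ over the indices with $p_\varphi > 0$. It bounds the right-hand side by $\log \sum_{\varphi} \sup \varphi$, the sum again taken over those indices. Adding back the nonnegative terms with $\mu(\varphi) = 0$ only increases this to $\log \ttH(\Phi)$. Invariance of $\mu$ and continuity of $\Phi$ are not really needed here; measurability and boundedness by $1$ suffice.

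For the consequences, apply the static inequality to the continuous partition of unity $\Phi_0^{n-1}$ to get $\ttH_\mu(\Phi_0^{n-1}) \leqslant \log \ttH(\Phi_0^{n-1})$. Divide by $n$ and let $n \to \infty$; both limits exist by the sub-additivity lemmas. This yields $\tth_\mu(T,\Phi) \leqslant \tth(T,\Phi)$, which is the quantity written $\tthtop(T,\Phi)$ in the statement. Taking the supremum over continuous partitions of unity $\Phi$ then gives $\tth_\mu(T) \leqslant \tthtop(T)$.

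The only delicate point is the degenerate terms, namely $\mu(\varphi) = 0$ or $\sup \varphi = 0$, where $\log$ blows up. Restricting Lemma~\ref{lemma: classical} to the indices with $\mu(\varphi) > 0$ handles these, as described above.
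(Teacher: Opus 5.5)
Your proof is correct and is essentially the paper's argument. The paper applies Lemma~\ref{lemma: classical} with $p_\varphi=\mu(\varphi)$ and $a_\varphi=\log\sup\varphi$ and then uses $\log\sup\varphi\geqslant\log\varphi$ pointwise, which are your two steps in the opposite order; your explicit handling of the indices with $\mu(\varphi)=0$ (where $\log\sup\varphi$ may be $-\infty$) is a small piece of care the paper leaves implicit.
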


\begin{proof}
Using Lemma~\ref{lemma: classical} with $p_i = \mu(\varphi_i)$,
\begin{align*}
\log \ttH(\Phi) &= \log \sum_{\varphi \in \Phi} e^{\log \sup \varphi} 
\geqslant \sum_{\varphi \in \Phi} \mu(\varphi) \left( -\log \mu(\varphi) + \log \sup \varphi \right) \\
&\geqslant \sum_{\varphi \in \Phi} - \mu(\varphi) \log \mu (\varphi) + \mu (\varphi \log \varphi) = \ttH_{\mu}(\Phi).
\end{align*}
Replacing $\Phi$ by $\Phi_0^{n-1}$ and taking limit yield $\tth_\mu(T,\Phi) \leqslant \tthtop(T,\Phi)$. Taking supremum in $\Phi$ finishes the proof.
\end{proof}

\subsection{Equivalence of definitions}

In this part, we prove that the ``tilded" versions of entropy coincide with their classical counterparts. For the classical definitions of the metric and topological entropies, see, for example, \cite[Definitions~4.9 and 7.5]{walters82book}. We split these results into several inequalities. This first one seems to be new.

\begin{proposition}\label{prop: thmu geq hmu}
For every $T$-invariant measure $\mu$, $\tth_{\mu}(T) \geqslant h_{\mu}(T)$.
\end{proposition}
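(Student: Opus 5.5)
The plan is to compare $h_\mu(T,\cA)$ for a fixed finite Borel partition $\cA=\{A_1,\dots,A_k\}$ with $\tth_\mu(T,\Phi)$ for a well-chosen continuous partition of unity $\Phi$. The link is the identity $H_\mu(\cA)=\ttH_\mu(\mathbbm{1}_\cA)$ together with the calculus of Lemma~\ref{lemma: basic prop static}. The target is the inequality
\begin{align*}
h_\mu(T,\cA) \leqslant \tth_\mu(T,\Phi) + \ttH_\mu(\mathbbm{1}_\cA \mid \Phi),
\end{align*}
followed by a choice of $\Phi$ making the conditional term as small as desired. Taking suprema over $\cA$ and $\Phi$ then gives $\tth_\mu(T)\geqslant h_\mu(T)$.

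\textbf{Step 1 (the inequality).} Write $\cA_0^{n-1}$ for the usual refinement of $\cA$, so that $\mathbbm{1}_{\cA_0^{n-1}}=(\mathbbm{1}_\cA)_0^{n-1}$. By Lemma~\ref{lemma: basic prop static}~i), applied in both orders, together with iv), we get
\begin{align*}
\ttH_\mu(\mathbbm{1}_{\cA_0^{n-1}}) \leqslant \ttH_\mu(\mathbbm{1}_{\cA_0^{n-1}}\vee \Phi_0^{n-1}) = \ttH_\mu(\Phi_0^{n-1}) + \ttH_\mu(\mathbbm{1}_{\cA_0^{n-1}} \mid \Phi_0^{n-1}).
\end{align*}
Next expand $\mathbbm{1}_{\cA_0^{n-1}}=\bigvee_i T^i\mathbbm{1}_\cA$ using ii). Drop conditioning factors using iii), keeping only $T^i\Phi$ in the $i$-th term. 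Then apply $T$-invariance of $\mu$, as in the subadditivity lemma. This bounds the conditional term by $n\,\ttH_\mu(\mathbbm{1}_\cA\mid\Phi)$. Dividing by $n$ and letting $n\to\infty$ yields the target inequality.

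\textbf{Step 2 (construction of $\Phi$).} Fix $\delta>0$. By regularity of $\mu$, choose compact sets $K_i\subset A_i$ with $\mu(A_i\setminus K_i)<\delta$. Since the $K_i$ are pairwise disjoint compact sets, they have pairwise disjoint open neighbourhoods $U_i$, and we may shrink these so that $\mu(U_i\setminus K_i)<\delta$. Urysohn's lemma gives continuous $\varphi_i:X\to[0,1]$ with $\varphi_i=1$ on $K_i$ and $\supp\varphi_i\subset U_i$. Because the $U_i$ are disjoint, $\varphi_0:=1-\sum_{i\geqslant1}\varphi_i$ takes values in $[0,1]$. Hence $\Phi=\{\varphi_0,\dots,\varphi_k\}$ is a continuous partition of unity.

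\textbf{Step 3 (smallness of $\ttH_\mu(\mathbbm{1}_\cA\mid\Phi)$, the main obstacle).} Since the entries of $\mathbbm{1}_\cA$ are indicators, the $\mu_\varphi(\mathbbm{1}_A\log\mathbbm{1}_A)$ terms vanish. Therefore $\ttH_{\mu_\varphi}(\mathbbm{1}_\cA)=H_{\mu_\varphi}(\cA)$ is the ordinary entropy of the probability vector $(\mu_\varphi(A_j))_j$.

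For $i\geqslant 1$ we have $\mu(\varphi_i)\leqslant\mu(K_i)+\delta$ and $\mu(\varphi_i\mathbbm{1}_{A_i})\geqslant\mu(K_i)$. When $\mu(K_i)$ is not tiny, the vector therefore puts mass at least $1-\delta/\mu(\varphi_i)$ on a single atom. Its entropy is then at most $\epsilon(p)+p\log k$ with $p=\delta/\mu(\varphi_i)$, where $\epsilon(p)=-p\log p-(1-p)\log(1-p)$. Weighted by $\mu(\varphi_i)$, this term is $O(\delta\log(k/\delta))$ uniformly. When $\mu(\varphi_i)$ is itself small, the crude bound $\mu(\varphi_i)\log k$ already suffices.

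For $\varphi_0$, note that $\varphi_0$ vanishes on $\bigcup K_i$, so $\mu(\varphi_0)\leqslant\mu(X\setminus\bigcup K_i)<k\delta$. Its contribution is therefore at most $k\delta\log k$.

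Summing, $\ttH_\mu(\mathbbm{1}_\cA\mid\Phi)\to0$ as $\delta\to0$, which closes the argument. The only delicate point is this uniform estimate of $\mu(\varphi)H_{\mu_\varphi}(\cA)$, handled via the concavity bound on the entropy of a nearly degenerate probability vector.
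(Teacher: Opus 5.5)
Your proof is correct, and it takes a more direct route than the paper. The paper works in two stages.
\begin{itemize}
\item First it replaces $\cP$ by a partition $\cQ$ whose atoms have $\mu$-null boundaries. This costs $H_\mu(\cP\mid\cQ)\leqslant\ve_0$ and some heavy bookkeeping: closed sets $F_i$, neighbourhoods $O_i$ with $\mu(\partial O_i)=0$, an overlap atom $Q_0$ and a leftover atom $Q_{-1}$.
\item Then it takes $\Phi$ subordinate to the cover $\{\cN_\delta(\partial\cQ),\oQ_{-1},Q_0,\oQ_1,\dots,\oQ_k\}$. Every $\varphi_i$ with $i\geqslant -1$ is then supported inside a single atom, so its contribution to $\ttH_\mu(\mathbbm{1}_\cQ\mid\Phi)$ vanishes exactly. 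The only error is $\mu(\cN_\delta(\partial\cQ))\log(k+2)$, which is small because $\mu(\partial\cQ)=0$.
\end{itemize}

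You skip the boundary regularization entirely. Your compact sets $K_i$ are pairwise disjoint, so they lie at positive distance and can be separated by disjoint open sets. This removes the overlap atom and makes $1-\sum_{i\geqslant1}\varphi_i$ automatically a valid element. (The paper's $F_i$ are also disjoint compacta, so its $Q_0$ could have been avoided the same way.)

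The price is that your $\varphi_i$ are not supported inside $A_i$, so their conditional entropies do not vanish. Your nearly-degenerate-vector estimate compensates for this. With thresholds such as $\mu(\varphi_i)\geqslant 2\delta$ versus $\mu(\varphi_i)<2\delta$, it gives a total error of order $k\delta\log(k/\delta)$.

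Your Step 1 inequality is the one the paper itself invokes, namely $h_\mu(T,\cQ)-\tth_\mu(T,\Phi)\leqslant\ttH_\mu(\mathbbm{1}_\cQ\mid\Phi)$. It follows from the same chain as in Proposition~\ref{prop: thmu leq hmu} with the roles swapped.

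What each route buys:
\begin{itemize}
\item Your version is shorter. Because you take the supremum over all finite $\cA$ at the end, it also covers $h_\mu(T)=\infty$ without change. The paper's choice of $\cP$ with $h_\mu(T,\cP)\geqslant h_\mu(T)-\ve_0$ tacitly assumes finiteness.
\item The paper's construction (null-boundary partition plus a subordinate partition of unity with exact vanishing) is reused as is in the lemma on positive partitions of unity in Section~\ref{sect: vanish diam}.
\end{itemize}
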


\begin{proof}
Fix $\mu$ an invariant measure and $\ve_0 > 0$. Let $\cP = \{ P_1 , \ldots , P_k \}$ be a finite partition of $X$ such that 
\begin{align*}
h_{\mu}(T, \cP) \geqslant h_\mu(T) - \ve_0.
\end{align*}
Without loss of generality, we can assume that $\mu(P_i)>0$ for every $1 \leqslant i \leqslant k$.

We proceed in two steps. First we construct a partition $\cQ$ from $\cP$ whose atoms have non-empty interior and zero measure boundaries. We then construct a continuous partition of unity $\Phi$ from $\cQ$.

Let $\ve > 0$. Since $\mu$ is regular, for each $1 \leqslant i \leqslant k$ there exists a closed set $F_i \subset P_i$ such that $\mu ( F_i \triangle P_i) < \ve$. Furthermore, we can choose an open set $O_i \supset F_i$ such that $\mu ( O_i \triangle F_i ) < \ve$ and $\mu ( \partial O_i) = 0$. Such open set can be obtain by taking a $\delta$-neighbourhood of $F_i$.

Let $Q_0 \coloneqq \bigcup_{1 \leqslant i < j \leqslant k} O_i \cap O_j$ and $Q_i = O_i \smallsetminus Q_0$ for $1 \leqslant i \leqslant k$. Finally, set $Q_{-1} = X \smallsetminus \bigcup_{0 \leqslant i \leqslant k} Q_i$. Denote $\cQ \coloneqq \{ Q_{-1}, \ldots , Q_k \}$ the obtained partition of $X$.

We can estimate the measures of $Q_0$ and $Q_{-1}$ as follows.
\begin{align*}
\mu(Q_0) &\leqslant \sum_{1 \leqslant i < j \leqslant k} \mu (O_i \cap O_j) = \sum_{1 \leqslant i < j \leqslant k} \mu ( (F_i \sqcup (F_i \triangle O_i)) \cap (F_j \sqcup (F_j \triangle O_j )) ) \\
&\leqslant 2\sum_{i=1}^{k} \mu (F_i \triangle O_i) + \sum_{i<j} \ve \leqslant 3 \frac{k(k+1)}{2} \ve , \\
\mu(Q_{-1}) &= \sum_{i=1}^k \mu( P_i \smallsetminus \bigcup_{j=1}^k O_j) \leqslant \sum_{i=1}^k \mu( P_i \smallsetminus \bigsqcup_{j=1}^k F_j) = \sum_{i=1}^k \mu( P_i \smallsetminus F_i) \leqslant k \ve.
\end{align*}
Furthermore, $\mu( \partial Q_i ) = 0$ for every $-1 \leqslant i \leqslant k$.

We can now estimate from below the entropy $h_\mu(T,\cQ)$. For this, we use that 
\begin{align*}
h_\mu(T,\cP) - h_\mu(T,\cQ) \leqslant H_\mu(\cP | \cQ) = \sum_{i=-1}^{k} \mu(Q_i) H_{\mu_{Q_i}}(\cP).
\end{align*}

We split the sum into the three contributions corresponding to $1 \leqslant i \leqslant k$, to $i=0$ and to $i=-1$.

For $1 \leqslant i \leqslant k$, since
\begin{align*}
\mu(P_i \cap Q_i) &= \mu ( (P_i \cap O_i) \smallsetminus Q_0) = \mu ( P_i \cap (F_i \sqcup (O_i \smallsetminus F_i)) \smallsetminus Q_0) \\
&= \mu(F_i \smallsetminus Q_0) + \mu(P_i \cap(O_i \smallsetminus F_i) \smallsetminus Q_0) \geqslant \mu(F_i \smallsetminus Q_0) \geqslant \mu(F_i) - 3 \frac{k(k+1)}{2} \ve, \\
\mu(Q_i) &= \mu(O_i \smallsetminus Q_0) = \mu (F_i \smallsetminus Q_0) + \mu ((O_i \smallsetminus F_i) \smallsetminus Q_0) \\
&\leqslant \mu (F_i \smallsetminus O_i) + \mu (O_i \triangle F_i) \leqslant \mu(F_i) + \ve,
\end{align*}
and from the uniform continuity of $\eta(t) = -t\log t$ on $[0,1]$, we can choose $\ve$ small enough (to make $\mu_{Q_i}(P_i)$ close enough to $\mu(P_i)$), so that
\begin{align*}
\sum_{i=1}^k \mu(Q_i) (-\mu_{Q_i}(P_i) \log \mu_{Q_i}(P_i)) < \frac{\ve_0}{4}.
\end{align*}
For $1 \leqslant i,j \leqslant k$, $i \neq j$,
\begin{align*}
\mu(P_j \cap Q_i) &= \mu(P_i \cap (O_i \smallsetminus Q_0)) = \mu( P_j \cap (F_i \sqcup (O_i \smallsetminus F_i) \smallsetminus Q_0) \\
&= \mu((P_j \cap F_i) \smallsetminus Q_0) + \mu ( P_j \cap (O_i\smallsetminus F_i) \smallsetminus Q_0) \leqslant \mu(O_i \smallsetminus F_i) \leqslant \ve, \\
\mu(Q_i) &= \mu ( O_i \smallsetminus Q_0) \geqslant \mu(O_i) - \mu(Q_0) \geqslant \mu(P_i) - \left( 2 + 3 \frac{k(k+1)}{2} \right) \ve.
\end{align*}
We can thus take $\ve$ small enough (to make $\mu_{Q_i}(P_j)$ close enough to $0$), so that
\begin{align*}
\sum_{i=1}^k \mu(Q_i) \sum_{j \neq i} -\mu_{Q_i}(P_j) \log \mu_{Q_i}(P_j) < \frac{\ve_0}{4}.
\end{align*}
In particular, the contribution of the $Q_i$, $1 \leqslant i \leqslant k$ in $H_\mu(\cP | \cQ)$ is bounded by $\ve_0 / 2$.

The contribution of $Q_0$ in $H_\mu(\cP | \cQ)$ is
\begin{align*}
\mu(Q_0) \sum_{j=1}^k -\mu_{Q_0}(P_j) \log \mu_{Q_0}(P_j) \leqslant \mu(Q_0) \log k \leqslant 3 \ve \frac{k(k+1)}{2} \log k,
\end{align*}
and the contribution of $Q_{-1}$ in $H_\mu(\cP | \cQ)$ is
\begin{align*}
\mu(Q_{-1}) \sum_{j=1}^k -\mu_{Q_{-1}}(P_j) \log \mu_{Q_{-1}}(P_j) \leqslant \mu(Q_{-1}) \log k \leqslant k \ve \log k.
\end{align*}
We can take $\ve$ small enough so that both contribution of $Q_0$ and $Q_{-1}$ are bounded by $\ve_0/4$. Therefore $H_\mu(\cP | \cQ) \leqslant \ve_0$ and thus
\begin{align*}
h_\mu(T,\cQ) \geqslant h_\mu(T) - 2 \ve_0.
\end{align*}

Let $\delta > 0$ and consider the open cover of $X$ 
\begin{align*}
\alpha \coloneqq \{ \cN_{\delta}(\partial \cQ), \oQ_{-1}, Q_0, \oQ_1, \ldots , \oQ_k \},
\end{align*}
where $\cN_\delta(A)$ is a $\delta$-neighbourhood of a set $A$. Let $\Phi = \{ \varphi_{-2}, \ldots, \varphi_k \}$ be a continuous partition of unity subordinated to $\alpha$. We now compare $\tth_\mu(T,\Phi)$ to $h_\mu(T,\cQ)$. 
Since $h_\mu(T,\cQ) = \tth_\mu(T, \mathbbm{1}_{\cQ})$, we get that
\begin{align*}
h_\mu(T,\cQ) - \tth_\mu(T, \Phi) \leqslant \ttH_\mu(\mathbbm{1}_{\cQ} | \Phi) = \sum_{i=-2}^k \mu(\varphi_i) \sum_{j=-1}^{k} - \mu_{\varphi_i}(Q_j) \log \mu_{\varphi_i}(Q_j).
\end{align*}
We split the sum into the contributions of $\varphi_{-2}$ and of $\varphi_i$, $i \geqslant -1$.

By definition of $\Phi$, for $i \geqslant -1$, we have that $\supp \varphi_i \subset \oQ_i$. But since the sets $\oQ_i$ are disjoint, $\mu_{\varphi_i}(Q_j)$ is $0$ if $i\neq j$, and is $1$ is $i=j$. In particular, the contribution of the $\varphi_i$, $i \geqslant -1$ in $\ttH_\mu(\mathbbm{1}_{\cQ} | \Phi)$ is $0$. Therefore,
\begin{align*}
\ttH_\mu(\mathbbm{1}_{\cQ} | \Phi) = \mu(\varphi_{-2}) \ttH_{\mu_{\varphi_{-2}}}(\mathbbm{1}_\cQ) \leqslant \mu(\cN_{\delta}(\partial \cQ)) \log(k+2).
\end{align*}
Since $\mu(\partial \cQ)=0$, by taking $\delta$ small enough, we can assume that $\ttH_\mu(\mathbbm{1}_{\cQ} | \Phi) \leqslant \ve_0$.
Therefore,
\begin{align*}
\tth_{\mu}(T,\Phi) \geqslant h_{\mu} (T,\cQ) - \ve_0 \geqslant h_{\mu}(T) - 3 \ve_0.
\end{align*}
Since $\ve_0 > 0$ is arbitrary, $\tth_\mu(T) \geqslant h_\mu(T)$.
\end{proof}

The converse inequality comes from \cite{GLW86} (further detailed in \cite{KSL00}). For completeness, we repeat the argument.
\begin{proposition}\label{prop: thmu leq hmu}
For every $T$-invariant measure $\mu$, $\tth_{\mu}(T) \leqslant h_{\mu}(T)$.
\end{proposition}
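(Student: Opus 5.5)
Fix a continuous partition of unity $\Phi=\{\varphi_1,\ldots,\varphi_m\}$. The goal is to show $\tth_\mu(T,\Phi)\leqslant h_\mu(T)$; taking the supremum over $\Phi$ then gives the claim. The idea is to compare $\Phi$ with an honest finite measurable partition of the product space $X\times[0,1]$. We realise $\Phi$ as the image of a partition $\cP$ of $X\times[0,1]$: for $x\in X$, split $[0,1]$ into consecutive intervals of lengths $\varphi_1(x),\ldots,\varphi_m(x)$, and let
\begin{align*}
P_i=\{(x,t) \mid \textstyle\sum_{j<i}\varphi_j(x)\leqslant t<\sum_{j\leqslant i}\varphi_j(x)\}.
\end{align*}
A single partition does not interact well with iterates, so we instead work on the skew space. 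Let $Y=X\times[0,1]^{\bN}$ with the map $S(x,(t_n)_n)=(Tx,(t_{n+1})_n)$, and the invariant measure $\nu=\mu\otimes\lambda^{\otimes\bN}$, where $\lambda$ is Lebesgue measure. Define the partition $\cP$ of $Y$ using the coordinate $t_0$ as above.

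Then $S^{-k}P_{i_k}$ is determined by $(T^kx,t_k)$, and the atom $\bigcap_k S^{-k}P_{i_k}$ has fibre over $x$ of $\lambda^{\otimes}$-measure $\prod_k\varphi_{i_k}(T^kx)$. Hence $\nu$ of this atom equals $\mu(\varphi_{i_0}\cdots\varphi_{i_{n-1}}\circ T^{n-1})$. Let $\cX$ denote the $\sigma$-algebra pulled back from $X$. A direct computation then gives
\begin{align*}
H_\nu(\cP_0^{n-1}\mid\cX)=\int\sum -\textstyle\prod\varphi\log\prod\varphi\,d\mu = -\mu\Big(\sum_{\phi\in\Phi_0^{n-1}}\phi\log\phi\Big),
\end{align*}
while $H_\nu(\cP_0^{n-1})=\sum-\mu(\phi)\log\mu(\phi)$. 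Therefore
\begin{align*}
\ttH_\mu(\Phi_0^{n-1})=H_\nu(\cP_0^{n-1})-H_\nu(\cP_0^{n-1}\mid\cX).
\end{align*}

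The key step, which I expect to be the main obstacle, is to bound
\begin{align*}
\tfrac1n\big[H_\nu(\cP_0^{n-1})-H_\nu(\cP_0^{n-1}\mid\cX)\big] \leqslant \tfrac1n H_\nu(\cQ_0^{n-1})+\tfrac1n\big[H_\nu(\cP_0^{n-1}\mid \cQ_0^{n-1})-H_\nu(\cP_0^{n-1}\mid\cX)\big]
\end{align*}
for a finite partition $\cQ$ of $X$ lifted to $Y$. The first term is at most $h_\mu(T,\cQ)+o(1)\leqslant h_\mu(T)+o(1)$. For the second term, we use independence of the $t_k$ given $\cX$, so that the conditional entropies split over $k$, together with the chain rule; this gives
\begin{align*}
\tfrac1n\big[H(\cP_0^{n-1}\mid\cQ_0^{n-1})-H(\cP_0^{n-1}\mid\cX)\big]\leqslant H_\nu(\cP\mid\cQ)-H_\nu(\cP\mid\cX).
\end{align*}
Choosing $\cQ$ fine enough, with mesh small relative to the uniform continuity modulus of the $\varphi_i$, makes this difference smaller than $\ve$. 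This is a martingale or continuity statement: $\varphi_i$ is nearly constant on atoms of $\cQ$, so $H(\cP\mid\cQ)$ approaches $\int\sum-\varphi_i\log\varphi_i\,d\mu$.

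The delicate point in the splitting is justifying $H(\cP_0^{n-1}\mid\cQ_0^{n-1})\leqslant\sum_k H(S^{-k}\cP\mid S^{-k}\cQ)$. This follows from subadditivity of conditional entropy and from the monotonicity in the conditioning established in Lemma~\ref{lemma: basic prop static}\,iii). The equality $H(\cP_0^{n-1}\mid\cX)=nH(\cP\mid\cX)$ follows from the product structure of the fibres. Letting $n\to\infty$ and then $\ve\to0$ yields $\tth_\mu(T,\Phi)\leqslant h_\mu(T)$.
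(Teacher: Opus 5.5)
Your proof is correct. Once translated, it follows the same decomposition as the paper's proof:
\begin{itemize}
\item compare $\Phi$ with a finite measurable partition $\cQ$ of $X$ on whose atoms every $\varphi_i$ is nearly constant;
\item bound $\ttH_\mu(\Phi_0^{n-1})$ by $H_\mu(\cQ_0^{n-1})$ plus a conditional excess;
\item reduce that excess to $n$ times its one-step value, using subadditivity and monotonicity in the conditioning;
\item make the one-step value small by uniform continuity of $\eta$.
\end{itemize}
Indeed, in your extension one checks that
\begin{align*}
H_\nu(\cP_0^{n-1}\mid\cQ_0^{n-1})-H_\nu(\cP_0^{n-1}\mid\mathcal{X})=\ttH_\mu\bigl(\Phi_0^{n-1}\mid(\mathbbm{1}_{\cQ})_0^{n-1}\bigr).
\end{align*}
So your key one-step quantity $H_\nu(\cP\mid\cQ)-H_\nu(\cP\mid\mathcal{X})$ equals $\ttH_\mu(\Phi\mid\mathbbm{1}_{\cQ})$. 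This is exactly the quantity the paper bounds, except that the paper uses its level-set partition $\cA$ where you use a small-mesh partition $\cQ$.

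What genuinely differs is the machinery. The paper stays on $X$ and uses the calculus for partitions of unity from Lemma~\ref{lemma: basic prop static}. You instead realise $\Phi$ as an honest partition of the skew product $(X\times[0,1]^{\bN},S,\mu\otimes\lambda^{\otimes\bN})$. There $\ttH_\mu(\Phi_0^{n-1})$ becomes the mutual information $H_\nu(\cP_0^{n-1})-H_\nu(\cP_0^{n-1}\mid\mathcal{X})$, and only classical inequalities for partitions are needed.

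The cost of your route is three verifications:
\begin{itemize}
\item $S$-invariance of $\nu$;
\item the fibre computation;
\item the exact identity $H_\nu(\cP_0^{n-1}\mid\mathcal{X})=nH_\nu(\cP\mid\mathcal{X})$. This needs conditional independence of the $t_k$, and also $T$-invariance of $\mu$ to identify each $H_\nu(S^{-k}\cP\mid\mathcal{X})$ with $H_\nu(\cP\mid\mathcal{X})$.
\end{itemize}
In return you get two things. First, a conceptual reading of the static entropy as a mutual information, which makes properties such as $\ttH_\mu\geqslant 0$ transparent. Second, your small-mesh choice of $\cQ$ directly gives a bound, uniform in $\mu$, against any sufficiently fine partition. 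That is precisely the form the paper later invokes in the proof of Theorem~\ref{thm: tail var ppl}.

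One trivial fix: assign $t_0=1$ to the last atom of $\cP$ (it is $\nu$-null anyway).
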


\begin{proof}
Let $\Phi$ be a continuous partition of unity. Let $\varepsilon > 0$. Define $k = \# \Phi$.

By uniform continuity of $\eta(t) = t \log t$ on $[0,1]$, let $\delta >0$ be such that for all $x,y \in [0,1]$, if $|x-y| < \delta$ then $|\eta(x) - \eta(y)| < \ve/2k$.

Let $\cP$ be the partition of $[0,1]$ given by the intervals $\left[ \frac{i}{r}, \frac{i+1}{r} \right)$ and $\left[\frac{r-1}{r},1 \right]$, for $0 \leqslant i \leqslant r-2$, where $r$ is large enough so that $1/r < \delta$.

Define the partition of $X$ 
\begin{align*}
\cA = \bigvee_{\varphi \in \Phi} \varphi^{-1} \cP,
\end{align*}
and the associated partition of unity $\mathbbm{1}_{\cA} = \{ \mathbbm{1}_A \mid A \in \cA \}$. We now show that $\ttH_{\mu}(\Phi | \mathbbm{1}_{\cA}) < \ve$.

To this end, we show that for all $A \in \cA$ and $\varphi \in \Phi$
\[ \mu_A(\varphi) \log \mu_A(\varphi) + \mu_A(\varphi \log \varphi) < \ve/k  . \]

Fix $A \in \cA$ and $\varphi \in \Phi$. Let $1 \leqslant i < r$ be such that $\frac{i}{r} \leqslant \varphi \leqslant \frac{i+1}{r}$ on $A$, that is \[ 0 \leqslant \varphi - \frac{i}{r} \leqslant \frac{1}{r} < \delta, \text{ on $A$.} \]
From this, we get on the one hand, by integrating with respect to $\mu_A$, $0 \leqslant \mu_A(\varphi) - \frac{i}{r} < \delta$, and thus
\begin{align*}
\left| \eta(\mu_A(\varphi)) - \eta\left( \frac{i}{r} \right) \right| < \frac{\ve}{2k}.
\end{align*}
On the other hand, we get $|\eta(\varphi) - \eta(i/r)| \leqslant \ve/2k$, and thus, by integration
\begin{align*}
\left| \mu_A(\eta(\varphi)) - \eta \left( \frac{i}{r} \right) \right| \leqslant \frac{\ve}{2k}.
\end{align*}
Hence $| \eta(\mu_A(\varphi)) - \mu_A(\eta(\varphi))| < \ve/k$. Summing over $\varphi \in \Phi$ yields $\ttH_{\mu}(\Phi | \mathbbm{1}_{\cA}) < \ve$.

Now, we get that
\begin{align*}
\ttH_{\mu}(\Phi_0^{n-1}) \leqslant \ttH_{\mu}(\Phi_0^{n-1} \vee (\mathbbm{1}_{\cA})_0^{n-1}) = H_{\mu}(\cA_0^{n-1}) + \ttH_{\mu}(\Phi_0^{n-1} |  (\mathbbm{1}_{\cA})_0^{n-1}).
\end{align*}
However, 
\begin{align*}
\ttH_{\mu}(\Phi_0^{n-1} |  (\mathbbm{1}_{\cA})_0^{n-1}) \leqslant \sum_{i=0}^{n-1} \ttH_{\mu}(T^i \Phi |  (\mathbbm{1}_{\cA})_0^{n-1}) \leqslant \sum_{i=0}^{n-1} \ttH_{\mu}(T^i \Phi |  T^i \mathbbm{1}_{\cA}) = n \ttH_{\mu}(\Phi | \mathbbm{1}_\cA) \leqslant n \ve.
\end{align*}
Hence $\tth_{\mu}(T,\Phi) \leqslant h_{\mu}(T,\cA) + \ve \leqslant h_\mu(T) + \ve$, for every $\ve >0$. Thus $\tth_{\mu}(T,\Phi) \leqslant h_\mu(T)$ for every continuous partition of unity $\Phi$. Therefore, $\tth_{\mu}(T) \leqslant h_\mu(T)$.
\end{proof}
For latter use, notice that in the above proof, the bound $\ttH_\mu(\Phi | \mathbbm{1}_\cA) < \ve$ is uniform in $\mu \in \cM(X,T)$. We now turn to the topological entropies.

\begin{proposition}\label{prop: thtop leq htop}
For any continuous self-map $T$ of a compact metric set $X$, it holds $\tthtop(T) \leqslant \htop(T)$.
\end{proposition}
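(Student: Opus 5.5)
Fix a continuous partition of unity $\Phi = \{\varphi_1,\dots,\varphi_k\}$. It suffices to show $\tth(T,\Phi) \leqslant \htop(T)$ and then take the supremum over $\Phi$. I will compare $\ttH(\Phi_0^{n-1})$ with the minimal cardinality of a subcover of an iterated open cover, as in the classical Adler--Konheim--McAndrew definition. Note that $\ttH(\Phi_0^{n-1})=\sum_{(i_0,\dots,i_{n-1})}\sup_x \prod_{j}\varphi_{i_j}(T^jx)$, where the sum is over words; only words whose product function is not identically zero contribute.

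The natural open cover is $\cU=\{U_1,\dots,U_k\}$ with $U_i=\{\varphi_i>0\}$. However, a direct count of the nonzero words by $N(\cU_0^{n-1})$ fails, since many words can be nonzero on a single element of a minimal subcover. Instead I will bound the sum of sups by the number of words. The idea is to discretise: let $\ve>0$ and set $V_i=\{\varphi_i>\ve\}$. If $\ve<1/k$, these open sets cover $X$, because at each point some $\varphi_i\geqslant 1/k$.

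The key estimate is as follows. For each point $x$, the sum over words of $\prod_j\varphi_{i_j}(T^jx)$ equals $1$. So the difficulty is only that the sup may be attained at different points for different words. To handle this, I group words by which element of a fixed finite cover the maximising point lies in. Take a minimal subcover $\cV$ of the refined cover $\cW_0^{n-1}$, where $\cW$ is a finite open cover chosen so that on each element $W\in\cW$ every $\varphi_i$ oscillates by at most a factor $e^{\delta}$ on the region where $\varphi_i\geqslant\ve'$. Then on an element $W_{j_0}\cap\dots\cap T^{-(n-1)}W_{j_{n-1}}$, each product $\prod_j \varphi_{i_j}\circ T^j$ has sup at most $e^{n\delta}$ times its value at any fixed point, up to the small factors, and the sum over words at a point is $1$.

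The obstacle is the factors where $\varphi_{i_j}$ is tiny, because multiplicative control fails near zero. I handle them by a threshold: split $\varphi_i=\varphi_i\mathbbm{1}_{\varphi_i\geqslant\ve'}+\varphi_i\mathbbm{1}_{\varphi_i<\ve'}$, and note that on a fixed $W$ the number of indices $i$ with $\varphi_i$ possibly below $\ve'$ is at most $k$, each contributing sup at most $\ve'$. Expanding the product then gives, for each element of $\cV$, a bound of the form
\[
\sum_{\text{words}}\sup_{V}\prod_j\varphi_{i_j}\circ T^j\leqslant \big(e^{\delta}+k\ve'\big)^n .
\]
Therefore
\[
\ttH(\Phi_0^{n-1})\leqslant N(\cW_0^{n-1})\,(e^\delta+k\ve')^n,
\]
and so
\[
\tth(T,\Phi)\leqslant h(T,\cW)+\log(e^\delta+k\ve')\leqslant\htop(T)+\log(e^\delta+k\ve').
\]
Letting $\delta,\ve'\to0$ gives the claim. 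The existence of such a $\cW$ follows from uniform continuity of the $\varphi_i$ and of $\log$ on $[\ve',1]$: take $\cW$ to be a cover by balls of small radius. Making the expansion bookkeeping precise, with a per-coordinate factor at most $e^\delta+k\ve'$, is the main technical step.
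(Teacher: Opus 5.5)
Your proof is correct and essentially the paper's argument. You cover $X$ by an open cover on which each $\varphi_i$ has small oscillation, bound $\ttH(\Phi_0^{n-1})$ by $N(\mathcal{W}_0^{n-1})$ times $\max_V \sum_{\varphi\in\Phi_0^{n-1}}\sup_V\varphi$, and use that the sup of a product is at most the product of sups, so this factorises into $n$ per-coordinate factors $\sum_i \sup_W \varphi_i$.

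The paper uses only additive control ($\sup_U\varphi-\inf_U\varphi\leqslant \ve/2k$), which gives the factor $1+\ve$ directly. Your multiplicative threshold splitting is therefore unnecessary. As written, it also still needs that additive control (which your small balls provide) for indices with $\varphi_i(x)<\ve'\leqslant\sup_W\varphi_i$.
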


The proof of the above proposition uses a very similar strategy as the one of Proposition~\ref{prop: thmu leq hmu}. In order to apply this strategy, we need first to introduce a notion of topological conditional entropy with respect to families of functions.

\begin{definition}\label{def: local cond top entropy V1}
Given two finite families of functions $\Phi$ and $\Psi$, we define
\begin{align*}
\ttH (\Phi | \psi ) &\coloneqq \sum_{\varphi \in \Phi} \sup \{ \varphi(x) \mid x \in \supp \psi \}, \quad \psi \in \Psi, 
\end{align*}
and,
\begin{align*}
\ttH (\Phi | \Psi) &\coloneqq \max \{ \ttH(\Phi | \psi) \mid \psi \in \Psi \}.
\end{align*}
\end{definition}

\begin{proof}
Let $\Phi$ be a continuous partition of unity and $\ve > 0$. Denote $k = \# \Phi$.

Let $\cI$ be a finite open cover of $[0,1]$ of diameter less than $\ve / 2k$. Denote $\cU$ the open cover of $X$ given by
\begin{align*}
\cU \coloneqq \bigvee_{\varphi \in \Phi} \varphi^{-1} \cI.
\end{align*}
In particular, we get that for each $\varphi \in \Phi$ and $U \in \cU$ 
\begin{align*}
\sup_{U} \varphi - \inf_{U} \varphi \leqslant \frac{\ve}{2k}.
\end{align*}
For each $n \geqslant 1$, let $\cV_n \subset \cU^n$ be a sub-cover of minimal cardinality. Therefore
\begin{align*}
\ttH (\Phi_0^{n-1}) 
&= \sum_{\varphi \in \Phi_0^{n-1}} \sup \varphi 
\leqslant \sum_{V \in \cV_n} \sum_{\varphi \in \Phi_0^{n-1}} \sup \varphi \mathbbm{1}_V 
= \sum_{V \in \cV_n} \ttH(\Phi_0^{n-1} | \mathbbm{1}_V) \\
&\leqslant \# \cV_n \, \ttH(\Phi_0^{n-1} | \mathbbm{1}_{\cV_n}) 
\leqslant \# \cV_n \, \ttH(\Phi_0^{n-1} | (\mathbbm{1}_{\cU})_0^{n-1} ).
\end{align*}
Now, let $U \in \cU_0^{n-1}$ be such that $\ttH(\Phi_0^{n-1} | (\mathbbm{1}_{\cU})_0^{n-1} ) = \ttH(\Phi_0^{n-1} | \mathbbm{1}_{U} )$. Hence
\begin{align*}
\ttH(\Phi_0^{n-1} | (\mathbbm{1}_{\cU})_0^{n-1} ) \leqslant \prod_{i=0}^{n-1} \ttH(T^i \Phi | \mathbbm{1}_U) \leqslant \prod_{i=0}^{n-1} \ttH(T^i \Phi | (\mathbbm{1}_{U})_0^{n-1} ).
\end{align*}

Fix some $0 \leqslant i  < n$ and let $V \in \bigcap_{k=0}^{n-1} T^{-k} U_i \in \cU_{0}^{n-1}$ be such that 
\begin{align*}
\ttH (T^i \Phi | (\mathbbm{1}_{\cU})_0^{n-1} ) 
&= \ttH (T^i \Phi | \mathbbm{1}_{U} ) \\
&\leqslant \sum_{\varphi \in \Phi} \sup_{T^{-i} U_i} \varphi \circ T^i = \ttH ( \Phi | \mathbbm{1}_{U_i} ) \leqslant  \ttH ( \Phi | \mathbbm{1}_{\cU} )
\end{align*}
Therefore,
\begin{align*}
\log \ttH (\Phi_0^{n-1}) \leqslant \log \# \cV_n + n \log \ttH ( \Phi | \mathbbm{1}_{\cU}).
\end{align*}
We now estimate the last term. Let $U \in \cU$ be maximizing $\ttH ( \Phi | \mathbbm{1}_{\cU})$ and let $x \in U$. Hence
\begin{align*}
\ttH ( \Phi | \mathbbm{1}_{\cU}) = \sum_{\varphi \in \Phi} \varphi(x) +  \sup_{U} \varphi - \varphi(x) \leqslant 1 +  \left( \sup_{U} \varphi - \inf_{U} \varphi \right) \# \Phi \leqslant 1 + \ve.
\end{align*}
Finally, combining the above and taking the limit in $n$,
\begin{align*}
\tth ( T, \Phi ) \leqslant h(T,\cU) + \log (1 + \ve) \leqslant \htop(T) + \log (1 + \ve).
\end{align*}
As it holds for any $\ve > 0$, we get that $\tth(T,\Phi) \leqslant \htop(T)$ for every continuous partition of unity $\Phi$. Hence $\tthtop(T) \leqslant \htop(T)$.
\end{proof}

\begin{corollary}\label{corol: eq of defs metr and top}
For any topological dynamical system $(X,T)$ and any $T$-invariant measure $\mu$ it holds $h_\mu(T) = \tth_\mu(T)$ and $\htop(T) = \tthtop(T)$.
\end{corollary}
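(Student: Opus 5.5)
The metric equality is immediate from Propositions~\ref{prop: thmu geq hmu} and~\ref{prop: thmu leq hmu}. For the topological equality, Proposition~\ref{prop: thtop leq htop} gives $\tthtop(T) \leqslant \htop(T)$, so only the reverse inequality $\htop(T) \leqslant \tthtop(T)$ remains.

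For that reverse inequality I will use the classical variational principle $\htop(T) = \sup_{\mu \in \cM(X,T)} h_\mu(T)$ \cite{goodwyn69,goodman71,misiurewicz76a}. For each invariant $\mu$, the metric equality just obtained gives $h_\mu(T) = \tth_\mu(T)$. Proposition~\ref{prop: thmu leq thtop} then gives $\tth_\mu(T) \leqslant \tthtop(T)$. Chaining these,
\begin{align*}
\htop(T) = \sup_{\mu} h_\mu(T) = \sup_\mu \tth_\mu(T) \leqslant \tthtop(T).
\end{align*}
Combined with Proposition~\ref{prop: thtop leq htop}, this yields $\htop(T) = \tthtop(T)$.

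The argument works equally when $\htop(T) = \infty$, since every inequality above is valid in $[0,\infty]$. The only point needing care is that the paper's topological statements use $\tthtop(T,\Phi)$ and $\tth(T,\Phi)$ interchangeably; both mean the local topological entropy of Definition~\ref{def: topological entropy}. There is no genuine obstacle: the heavy lifting was done in the three preceding propositions and in the classical variational principle, which I take as known.
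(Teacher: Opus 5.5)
Your proposal is correct and is exactly the paper's argument: the metric equality comes from Propositions~\ref{prop: thmu geq hmu} and~\ref{prop: thmu leq hmu}, and the topological one from the chain $\htop(T) = \sup_\mu h_\mu(T) = \sup_\mu \tth_\mu(T) \leqslant \tthtop(T) \leqslant \htop(T)$, which uses the classical variational principle together with Propositions~\ref{prop: thmu leq thtop} and~\ref{prop: thtop leq htop}. Your remark that the chain stays valid in $[0,\infty]$ is a reasonable addition, since the corollary, unlike Theorem~\ref{main thm 1}, does not assume finite topological entropy.
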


\begin{proof}
From Propositions~\ref{prop: thmu geq hmu} and~\ref{prop: thmu leq hmu}, we get that for any $T$-invariant measure $\mu$, $\tth_{\mu}(T) = h_{\mu}(T)$. Furthermore, using the classical variational principle, as well as Propositions~\ref{prop: thmu leq thtop} and~\ref{prop: thtop leq htop}, we get
\begin{align*}
\htop(T) &= \sup_{\mu} h_{\mu}(T) 
= \sup_{\mu} \tth_{\mu}(T) 
\leqslant \tthtop(T)
\leqslant \htop(T),
\end{align*}
hence $\tthtop(T) = \htop(T)$.
\end{proof}

\subsection{Some other properties}

Here, we prove several properties of the local entropy. We begin to prove that the local metric entropy defined with partitions of unity behaves just like its counterpart with respect to product systems. 

\begin{proposition}\label{prop: entropy of product}
For any continuous self-maps $T$ and $S$ acting respectively on $X$ and $Y$, and any measure $\mu \in \cM( X \times Y, T \times S)$, $\mu_X$ and $\mu_Y$ its marginals on respectively $X$ and $Y$, then for any partitions of unity $\Phi$ and $\Psi$ on respectively $X$ and $Y$, 
\begin{align*}
\max \{ \tth_{\mu_X}(T,\Phi), \, \tth_{\mu_Y}(S,\Psi) \} \leqslant \tth_\mu(T \times S, \Phi \otimes \Psi) \leqslant \tth_{\mu_X}(T,\Phi) + \tth_{\mu_Y}(S,\Psi) .
\end{align*}
\end{proposition}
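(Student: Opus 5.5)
The plan is to reduce both inequalities to statements about the static entropies $\ttH_\mu((\Phi\otimes\Psi)_0^{n-1})$ and then pass to the limit. Here $\Phi\otimes\Psi = \{\varphi\otimes\psi\}$ with $(\varphi\otimes\psi)(x,y)=\varphi(x)\psi(y)$. The first step is to note that
\begin{align*}
(\Phi\otimes\Psi)_0^{n-1} = (\Phi_0^{n-1}\otimes 1)\vee(1\otimes\Psi_0^{n-1}),
\end{align*}
where $\Phi\otimes 1 = \{\varphi\otimes 1\mid \varphi\in\Phi\}$ is a partition of unity on $X\times Y$. This identity holds because $(\varphi\otimes\psi)\circ(T\times S)^i = (\varphi\circ T^i)\otimes(\psi\circ S^i)$, and products of tensors factor coordinatewise. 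In addition, since $\mu_X$ is the marginal of $\mu$, we have $\mu(f\otimes 1)=\mu_X(f)$ for any measurable $f$ on $X$, and in particular $\mu((f\log f)\otimes 1)=\mu_X(f\log f)$. It follows that $\ttH_\mu(\Phi_0^{n-1}\otimes 1)=\ttH_{\mu_X}(\Phi_0^{n-1})$, and likewise for $\Psi$ with $\mu_Y$.

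For the upper bound, apply Lemma~\ref{lemma: basic prop static} i) to get
\begin{align*}
\ttH_\mu((\Phi\otimes\Psi)_0^{n-1}) = \ttH_\mu(\Phi_0^{n-1}\otimes 1) + \ttH_\mu(1\otimes\Psi_0^{n-1}\mid \Phi_0^{n-1}\otimes 1).
\end{align*}
Then apply iii) with $\Psi_1=\{1\}$ to bound the conditional term by $\ttH_\mu(1\otimes\Psi_0^{n-1})=\ttH_{\mu_Y}(\Psi_0^{n-1})$. Note that $\ttH_\mu(\cdot\mid\{1\})=\ttH_\mu(\cdot)$ directly from the definition. Dividing by $n$ and letting $n\to\infty$ gives $\tth_\mu(T\times S,\Phi\otimes\Psi)\leqslant\tth_{\mu_X}(T,\Phi)+\tth_{\mu_Y}(S,\Psi)$.

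For the lower bound, use the same decomposition together with iv), which gives $\ttH_\mu(1\otimes\Psi_0^{n-1}\mid\Phi_0^{n-1}\otimes 1)\geqslant 0$. Hence $\ttH_\mu((\Phi\otimes\Psi)_0^{n-1})\geqslant \ttH_{\mu_X}(\Phi_0^{n-1})$. Swapping the roles of the two factors (the join is commutative) gives the analogous bound with $\mu_Y$ and $\Psi$. Dividing by $n$ and passing to the limit yields the maximum inequality.

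There are no real obstacles; the main care point is checking the marginal identities, including for the $\mu(\varphi\log\varphi)$ term, and the convention for conditioning on functions of zero integral. I would also check that i), iii) and iv) of Lemma~\ref{lemma: basic prop static} do not use $T$-invariance: they are purely static and hold for any probability measure, so invariance of $\mu$ is needed only for the limits to exist, which is the subadditivity lemma applied to $T\times S$, $T$ and $S$.
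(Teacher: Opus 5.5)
Your proposal is correct and follows essentially the paper's proof. It uses the same decomposition $(\Phi\otimes\Psi)_0^{n-1}=(\Phi_0^{n-1}\otimes 1)\vee(1\otimes\Psi_0^{n-1})$, the same marginal identity $\ttH_\mu(\Phi_0^{n-1}\otimes 1)=\ttH_{\mu_X}(\Phi_0^{n-1})$, and Lemma~\ref{lemma: basic prop static} i) with iv) for the lower bound. For the upper bound, the paper just cites subadditivity of $\tth_\mu$ under joins, and your static argument via i) and iii) with $\Psi_1=\{1\}$ is exactly what justifies that step.
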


\begin{proof}
Let $\Phi$ and $\Psi$ be continuous partition of unity on respectively $X$ and $Y$. We extend $\Phi$ into $\bar \Phi = \{ \bar \varphi \mid \varphi \in \Phi \}$ on $X \times Y$ by setting $\bar \varphi(x,y) = \varphi(x)$. Thus $\bar \Phi$ is a partition of unity of $X \times Y$. We proceed analogously to extend $\Psi$ into $\bar \Psi$. Notice that $\Phi \otimes \Psi = \bar \Phi \vee \bar \Psi$, and furthermore for any $n \geqslant 1$,
\begin{align*}
(\Phi \otimes \Psi)_0^{n-1} = (\bar \Phi)_0^{n-1} \vee (\bar \Psi)_0^{n-1}.
\end{align*}
In order to prove the first inequality of the proposition, note that
\begin{align*}
\ttH_\mu((\Phi \otimes \Psi)_0^{n-1}) = \ttH_\mu((\bar \Phi)_0^{n-1}) + \ttH_\mu( (\bar \Psi)_0^{n-1} | (\bar \Phi)_0^{n-1}) \geqslant \ttH_\mu((\bar \Phi)_0^{n-1}) = \ttH_{\mu_X}( \Phi_0^{n-1}),
\end{align*}
where the last equality follows from the definition of $\bar Phi$ and of $\mu_X$. Furthermore, exchanging the roles of $\Phi$ and $\Psi$ in the above leads to
\begin{align*}
\ttH_\mu((\Phi \otimes \Psi)_0^{n-1}) \geqslant \ttH_{\mu_Y}( \Phi_0^{n-1})
\end{align*}
Hence, combining the above and taking the limit as $n$ goes to infinity yield
\begin{align*}
\tth_\mu(T \times S, \Phi \otimes \Psi) \geqslant \max \{ \tth_{\mu_X}(T, \Phi), \, \tth_{\mu_Y}(S, \Psi)  \}.
\end{align*} 

We now turn to the second inequality of the proposition. Note that,
\begin{align*}
\tth_\mu(T\times S, \Phi \otimes \Psi) = \tth_\mu(T\times S, \bar \Phi \vee \bar \Psi) \leqslant \tth_\mu(T\times S, \bar \Phi) + \tth_\mu(T\times S, \bar \Psi).
\end{align*} 
Since for each $\varphi \in \Phi$ and $\bar \varphi \in \bar \Phi$ its extension, we have that $(T \times S)\bar \varphi(x,y) = T \varphi(x)$, we get that for every $n$,
\begin{align*}
\ttH_\mu((\bar \Phi)_0^{n-1}) = \ttH_{\mu_X}(\Phi_0^{n-1}).
\end{align*}
Therefore, $\tth_\mu(T\times S, \bar \Phi) = \tth_{\mu_X}(T,\Phi)$. Exchanging $\Phi$, $\bar \Phi$, $T$ and $\mu_X$ with $\Phi$, $\bar \Psi$, $S$ and $\mu_Y$ in the above yields $\tth_\mu(T\times S, \bar \Psi) = \tth_{\mu_Y}(S,\Psi)$, which concludes the proof.
\end{proof}

We now prove that the local metric entropy function and its conditional version are upper semi-continuous.

\begin{proposition}\label{prop: upper semi-continuity}
For any topological dynamical system $(X,T)$, and any continuous partitions of unity $\Phi$, $\Psi$, the map $\mu \mapsto \tth_\mu(T,\Phi | \Psi)$ is upper semi-continuous on $\cM(X,T)$ with respect to the weak-$^*$ topology.
\end{proposition}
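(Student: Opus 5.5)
The plan is the standard one: write $\tth_\mu(T,\Phi|\Psi)$ as an infimum of weak-$^*$ continuous functions of $\mu$. An infimum of continuous functions is upper semi-continuous.

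\emph{Step 1: continuity of the static term.} For each fixed $n$, I will show that $\mu \mapsto \ttH_\mu(\Phi_0^{n-1} | \Psi_0^{n-1})$ is continuous on $\cM(X,T)$. By Lemma~\ref{lemma: basic prop static}~i),
\begin{align*}
\ttH_\mu(\Phi_0^{n-1} | \Psi_0^{n-1}) = \ttH_\mu(\Phi_0^{n-1} \vee \Psi_0^{n-1}) - \ttH_\mu(\Psi_0^{n-1}),
\end{align*}
so it suffices that $\mu \mapsto \ttH_\mu(\Theta)$ is continuous for any continuous partition of unity $\Theta$. This holds because $\ttH_\mu(\Theta) = \sum_{\theta} \big(-\eta_0(\mu(\theta)) + \mu(\theta\log\theta)\big)$, with $\eta_0(t) = t\log t$.
\begin{itemize}
\item Each $\theta$ is continuous, so $\mu \mapsto \mu(\theta)$ is weak-$^*$ continuous, and $\eta_0$ is continuous on $[0,1]$.
\item $\theta \log\theta$ (extended by $0$ where $\theta=0$) is a continuous function on $X$, because $t\log t \to 0$ as $t\to 0$. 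Hence $\mu \mapsto \mu(\theta\log\theta)$ is also weak-$^*$ continuous.
\end{itemize}
This is exactly where continuity of the partitions of unity is used. It replaces the usual requirement that partition boundaries have measure zero.

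\emph{Step 2: the limit is an infimum.} By the sub-additivity lemma, $a_n(\mu) = \ttH_\mu(\Phi_0^{n-1}|\Psi_0^{n-1})$ is sub-additive in $n$. Fekete's lemma then gives
\begin{align*}
\tth_\mu(T,\Phi|\Psi) = \lim_n \frac{a_n(\mu)}{n} = \inf_n \frac{a_n(\mu)}{n}.
\end{align*}
The sub-additivity used invariance of $\mu$, but it holds for every $\mu \in \cM(X,T)$, which is all we need. Combining with Step 1, $\tth_\mu(T,\Phi|\Psi)$ is an infimum of continuous functions of $\mu$, hence upper semi-continuous.

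\emph{Main obstacle.} The only delicate point is Step 1, in particular the terms $\mu(\theta\log\theta)$ on the zero sets of $\theta$. Setting $0\log 0=0$ makes $\theta\log\theta$ a genuinely continuous bounded function, which resolves it. The convention $\mu(\psi)\ttH_{\mu_\psi}(\Phi)=0$ when $\mu(\psi)=0$ could cause trouble at measures where some $\mu(\psi)$ vanishes. Using the identity from Lemma~\ref{lemma: basic prop static}~i) avoids dividing by $\mu(\psi)$ altogether; that identity remains valid under the convention, since both sides' $\psi$-contributions vanish when $\mu(\psi)=0$. The unconditional case $\mu\mapsto\tth_\mu(T,\Phi)$ follows by taking $\Psi=\{1\}$.
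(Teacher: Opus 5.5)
Your proof is correct and follows the same route as the paper. Like the paper, you first show that each $\mu \mapsto \ttH_\mu(\Phi_0^{n-1}|\Psi_0^{n-1})$ is weak-$^*$ continuous, then use sub-additivity to write $\tth_\mu(T,\Phi|\Psi)$ as an infimum of continuous maps; your reduction via Lemma~\ref{lemma: basic prop static}~i) to unconditional static entropies is a small improvement, since it explicitly handles measures with some $\mu(\psi)=0$, a point the paper passes over silently.
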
  

\begin{proof}
For each $n \geqslant 1$, by continuity of each $\varphi \in \Phi_0^{n-1}$, $\psi \in \Psi_0^{n-1}$ and of $\varphi \psi \log \varphi$, the map $\mu \mapsto \ttH_\mu( \Phi_0^{n-1} | \Psi_0^{n-1})$ is continuous by definition of the weak-$^*$ topology. Now, by sub-additivity, we can write $\mu \mapsto \tth_\mu(T,\Phi,\Psi)$ as the infimum of a countable family of continuous functions defined over a compact set, hence the claimed upper semi-continuity.
\end{proof}

\section{Topological and metric pressures}\label{sect: pressure}

In this section, we extend the definitions of entropies with continuous partitions of unity to pressures, both metric and topological. We prove that these new definition coincides with the usual ones in Theorem~\ref{thm: tPtop = Ptop}.

We begin by defining the metric pressure.

\begin{definition}
Let $g : X \to \mathbbm{R}$ be a continuous potential. Given a continuous partition of unity $\Phi$, the static metric pressure of $(T,g)$ is
\begin{align*}
\ttP_\mu(T,g,\Phi,n) \coloneqq \ttH_\mu(\Phi_0^{n-1}) + \int_X g \intd \mu,
\end{align*}
and its local metric pressure is 
\begin{align*}
\ttP_\mu(T,g,\Phi) \coloneqq \tth_\mu(T,\Phi) + \int_X g \intd \mu.
\end{align*}
The metric pressure of $(T,g)$ is
\begin{align*}
\ttP_\mu(T,g) \coloneqq \sup \{ \ttP_\mu(T,g,\Phi) \mid \text{ $\Phi$ continuous partition of unity} \} = \tth_\mu(T) + \int_X g \intd \mu .
\end{align*}
\end{definition}
Since $\tth_\mu(T) = h_\mu(T)$, the metric pressure $\ttP_\mu(T,g)$ coincides with its classical counterpart denoted $P_\mu(T,g)$.

We now turn to the notion of topological pressure.

\begin{definition}
Let $g : X \to \mathbbm{R}$ be a continuous potential. Given a continuous partition of unity $\Phi$, the static pressure of $(T,g)$ is
\begin{align*}
\ttP(T,g,\Phi,n) \coloneqq \sum_{\varphi \in \Phi_0^{n-1}} \sup \varphi e^{S_n g}, \quad \text{where } S_n g \coloneqq \sum_{i=0}^{n-1} g \circ T^i,
\end{align*}
and its local pressure is 
\begin{align*}
\ttPtop(T,g,\Phi) \coloneqq \lim_{n \to \infty} \frac{1}{n} \log \ttP(T,g,\Phi,n).
\end{align*}
The topological pressure of $(T,g)$ is
\begin{align*}
\ttPtop(T,g) \coloneqq \sup \{ \ttPtop(T,g,\Phi) \mid \text{ $\Phi$ continuous partition of unity} \}.
\end{align*}
\end{definition}

The existence of the limit in the definition of $\ttPtop(T,g)$ comes from the following results.

\begin{lemma}
The sequence $\left( \log \ttP(T,g,\Phi,n) \right)_n$ is sub-additive. In particular, $\ttPtop(T,g,\Phi)$ is well defined.
\end{lemma}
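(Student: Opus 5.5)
The plan is to copy the sub-additivity proof given above for $\log \ttH(\Phi_0^{n-1})$, carrying the Birkhoff sum along with the functions. The key identity is the cocycle relation
\begin{align*}
S_{n+m} g = S_n g + (S_m g) \circ T^n .
\end{align*}

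First I will write $\Phi_0^{n+m-1} = \Phi_0^{n-1} \vee T^n \Phi_0^{m-1}$. Every element of this join has the form $\varphi_1 \cdot (\varphi_2 \circ T^n)$ with $\varphi_1 \in \Phi_0^{n-1}$ and $\varphi_2 \in \Phi_0^{m-1}$. For such an element I will split the weight as
\begin{align*}
\varphi_1 (\varphi_2\circ T^n) e^{S_{n+m} g} = \big(\varphi_1 e^{S_n g}\big)\cdot\big((\varphi_2 e^{S_m g})\circ T^n\big).
\end{align*}
Since both factors are non-negative, the supremum of the product is at most $\sup \varphi_1 e^{S_n g}$ times $\sup (\varphi_2 e^{S_m g}) \circ T^n$. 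The second supremum is at most $\sup \varphi_2 e^{S_m g}$; it is in fact equal, because $T$ is surjective, but the inequality is all that is needed.

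Summing over all pairs $(\varphi_1,\varphi_2)$ then factorises the double sum. This gives
\begin{align*}
\ttP(T,g,\Phi,n+m) \leqslant \ttP(T,g,\Phi,n)\,\ttP(T,g,\Phi,m).
\end{align*}
The sum over the join is indexed by pairs rather than by distinct functions. This matches the convention already used in the entropy case, so it causes no issue.

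Taking logarithms turns this into sub-additivity of $\log \ttP(T,g,\Phi,n)$. This requires $\ttP(T,g,\Phi,n) > 0$, which holds because the functions in $\Phi_0^{n-1}$ sum to $1$, so at least one of them has positive supremum, and $e^{S_n g} > 0$.

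To apply Fekete's lemma I also need the sequence not to drop to $-\infty$ linearly too fast. This follows from the lower bound $\ttP(T,g,\Phi,n) \geqslant e^{-n\|g\|_\infty}\,\ttH(\Phi_0^{n-1}) \geqslant e^{-n\|g\|_\infty}$, using $\ttH(\Phi_0^{n-1}) \geqslant 1$, together with the upper bound $\ttP(T,g,\Phi,n) \leqslant \#\Phi^n e^{n\|g\|_\infty}$. Fekete's lemma then shows that $\frac1n \log \ttP(T,g,\Phi,n)$ converges to its infimum, which is a finite real number, so $\ttPtop(T,g,\Phi)$ is well defined. There is no real obstacle; the only point needing care is the factorisation of $e^{S_{n+m}g}$ through the cocycle identity.
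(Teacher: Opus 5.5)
Your proposal is correct and is essentially the paper's proof. You factor $e^{S_{n+m}g} = e^{S_n g}\, e^{(S_m g)\circ T^n}$, bound the supremum of each product in $\Phi_0^{n-1}\vee T^n\Phi_0^{m-1}$ by the product of suprema, and the double sum factorises into $\ttP(T,g,\Phi,n)\,\ttP(T,g,\Phi,m)$; your extra checks (positivity of $\ttP(T,g,\Phi,n)$ so the logarithm is defined, and $\frac1n\log\ttP(T,g,\Phi,n)\geqslant -\|g\|_\infty$ so the Fekete limit is finite) are valid details the paper leaves implicit.
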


\begin{proof}
For any $n$ and $m$ larger than $1$,
\begin{align*}
\ttP(T,g,\Phi,n+m) &= \sum_{\varphi_1 \circ T^n \varphi_2 \in T^n \Phi_0^{m-1} \vee \Phi_0^{n-1}} \sup \varphi_1 \circ T^n \varphi_2 e^{S_m g \circ T^n + S_n g} \\
&\leqslant \sum_{\varphi_1 \in \Phi_0^{m-1}} \sup \varphi_1 \circ T^n e^{S_m g \circ T^n} \sum_{\varphi_2 \in \Phi_0^{n-1}} \sup \varphi_2 e^{S_n g} \\
&\leqslant \ttP(T,g,\Phi,n) \, \ttP(T,g,\Phi,m).
\end{align*}
Taking logarithm of the above concludes the proof.
\end{proof}

We can now prove that
\begin{theorem}\label{thm: tPtop = Ptop}
For any continuous potential $g$, $\ttPtop(T,g) = \Ptop(T,g)$. Furthermore, for every $T$-invariant measure $\mu$, $\ttP_{\mu}(T,g) = P_{\mu}(T,g)$.
\end{theorem}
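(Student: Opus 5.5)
The metric statement is immediate. By definition $\ttP_\mu(T,g) = \tth_\mu(T) + \int g \intd\mu$, and Corollary~\ref{corol: eq of defs metr and top} gives $\tth_\mu(T) = h_\mu(T)$. So $\ttP_\mu(T,g) = P_\mu(T,g)$. For the topological statement, I would mimic the proof of Corollary~\ref{corol: eq of defs metr and top}. This means proving two inequalities,
\begin{align*}
\sup_\mu P_\mu(T,g) \leqslant \ttPtop(T,g) \leqslant \Ptop(T,g),
\end{align*}
and then concluding with the classical variational principle for pressure, $\Ptop(T,g) = \sup_\mu P_\mu(T,g)$.

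\emph{Lower bound.} I first extend Proposition~\ref{prop: thmu leq thtop} to pressure, namely
\begin{align*}
\ttH_\mu(\Phi_0^{n-1}) + \int S_n g \intd\mu \leqslant \log \ttP(T,g,\Phi,n).
\end{align*}
To see this, apply Lemma~\ref{lemma: classical} with $p_\varphi = \mu(\varphi)$ and $a_\varphi = \log \sup(\varphi e^{S_n g})$, for $\varphi \in \Phi_0^{n-1}$. It remains to check that
\begin{align*}
\mu(\varphi)\log\sup(\varphi e^{S_ng}) \geqslant \mu(\varphi\log\varphi) + \mu(\varphi S_n g).
\end{align*}
This holds pointwise, since $\log \sup(\varphi e^{S_n g}) \geqslant \log\varphi + S_n g$ wherever $\varphi > 0$. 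Summing over $\varphi$ and using $\sum_\varphi \varphi = 1$ gives the claim.

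Next, by invariance $\int S_n g \intd\mu = n\int g\intd\mu$. Dividing by $n$ and letting $n \to \infty$ gives $\ttP_\mu(T,g,\Phi) \leqslant \ttPtop(T,g,\Phi)$. Taking suprema over $\Phi$ and $\mu$, and using the metric statement, yields $\sup_\mu P_\mu(T,g) \leqslant \ttPtop(T,g)$.

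\emph{Upper bound.} I follow the proof of Proposition~\ref{prop: thtop leq htop}. Fix $\Phi$ and $\ve>0$. Build the open cover $\cU = \bigvee_{\varphi} \varphi^{-1}\cI$ so that each $\varphi \in \Phi$ oscillates by less than $\ve/2k$ on every $U\in\cU$, where $k = \#\Phi$. Refining $\cU$ if necessary, I also arrange that $g$ oscillates by less than $\ve$ on each $U$. Let $\cV_n \subset \cU_0^{n-1}$ be a subcover. Then
\begin{align*}
\ttP(T,g,\Phi,n) \leqslant \sum_{V\in\cV_n} \sup_V e^{S_ng} \sum_{\varphi\in\Phi_0^{n-1}} \sup_V \varphi.
\end{align*}
As in the earlier proof, the inner sum is bounded by $(1+\ve)^n$, uniformly in $V$. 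Hence
\begin{align*}
\ttP(T,g,\Phi,n) \leqslant (1+\ve)^n \sum_{V\in\cV_n}\sup_V e^{S_ng}.
\end{align*}
Now take $\cV_n$ to minimize $\sum_{V}\sup_V e^{S_n g}$ over subcovers of $\cU_0^{n-1}$. This is the open-cover definition of pressure, $p_n(T,g,\cU)$ in Walters' notation~\cite{walters82book}, which gives
\begin{align*}
\ttPtop(T,g,\Phi) \leqslant \lim_n \frac1n\log p_n(T,g,\cU) + \log(1+\ve) \leqslant \Ptop(T,g) + \log(1+\ve).
\end{align*}
Letting $\ve \to 0$ and taking the supremum over $\Phi$ gives $\ttPtop(T,g) \leqslant \Ptop(T,g)$.

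\emph{Main obstacle.} The delicate step is the upper bound, specifically the bound on $\sum_\varphi \sup_V\varphi$. One must check that, for $\varphi\in\Phi_0^{n-1}$ a product $\prod_i \varphi_i\circ T^i$ and $V=\bigcap_i T^{-i}U_i$, the sup over $V$ is at most $\prod_i \sup_{U_i}\varphi_i$. Summing this product over all choices of the $\varphi_i$ then factorizes into $\prod_i \sum_{\varphi_i\in\Phi}\sup_{U_i}\varphi_i \leqslant (1+\ve)^n$. The second point to check is the identification of $\lim_n \frac1n \log p_n(T,g,\cU)$ with a quantity bounded by $\Ptop(T,g)$. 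Walters defines $p_n$ via sups of $e^{S_n g}$ over elements of subcovers, so the bound is direct and no oscillation control on $g$ is actually needed. I would first try this direct bound, and fall back on spanning sets with the oscillation control on $g$ only if the cover formulation gives trouble.
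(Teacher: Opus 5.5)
Your proof takes the same route as the paper's. The metric part comes from Corollary~\ref{corol: eq of defs metr and top}, and the lower bound from Lemma~\ref{lemma: classical} applied to $\Phi_0^{n-1}$. The upper bound uses the cover $\cU$, the factorisation $\sum_{\varphi\in\Phi_0^{n-1}}\sup_V\varphi\leqslant(1+\ve)^n$ and a minimising subcover, and the variational principle closes the argument. All of that is sound.

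\textbf{The gap.} Your last paragraph claims that $\lim_n\frac1n\log p_n(T,g,\cU)\leqslant\Ptop(T,g)$ holds directly, with no control of $g$ on $\cU$. That is false for the $\sup_V$ version $p_n$ and a general cover. The bound that holds for every cover is the one for the $\inf_V$ version, Walters' $q_n$. Here is a counterexample:
\begin{itemize}
\item Let $X=\{0,1\}^{\bN}\sqcup\{q\}$, with $T$ the shift on $\{0,1\}^{\bN}$ and $q$ an isolated fixed point.
\item Let $g=c\mathbbm{1}_{\{q\}}$ with $c>0$, and $\cU=\{[0]\cup\{q\},[1]\cup\{q\}\}$.
\item The elements of $\cU_0^{n-1}$ are the sets $[w]\cup\{q\}$ with $|w|=n$. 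Each cylinder $[w]$ meets only $[w]\cup\{q\}$, so every subcover uses all $2^n$ elements, and each has $\sup e^{S_ng}=e^{nc}$.
\item Hence $\lim_n\frac1n\log p_n(T,g,\cU)=\log2+c>\max\{\log2,c\}=\Ptop(T,g)$.
\end{itemize}
So the refinement of $\cU$ that makes $g$ oscillate by less than $\ve$ on each element is genuinely needed, and you must not drop it. The paper's own write-up also passes from $\Ptop(T,g,\cU)$ to $\Ptop(T,g)$ without such control. Your refinement is what actually makes this step valid.

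\textbf{The fix.} With the refinement, $S_ng$ oscillates by less than $n\ve$ on each element of $\cU_0^{n-1}$.
\begin{itemize}
\item Let $\delta$ be a Lebesgue number of $\cU$ and $F$ an $(n,\delta/3)$-spanning set.
\item For $x\in F$, the Bowen ball of radius $\delta/3$ about $x$ lies in some $V_x\in\cU_0^{n-1}$ containing $x$.
\item Then $\{V_x\}_{x\in F}$ is a subcover with $\sum_{x\in F}\sup_{V_x}e^{S_ng}\leqslant e^{n\ve}\sum_{x\in F}e^{S_ng(x)}$.
\item Therefore $\limsup_n\frac1n\log p_n(T,g,\cU)\leqslant\Ptop(T,g)+\ve$.
\end{itemize}
Your chain should then end with $\Ptop(T,g)+\ve+\log(1+\ve)$ rather than $\Ptop(T,g)+\log(1+\ve)$. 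Letting $\ve\to0$ still gives $\ttPtop(T,g)\leqslant\Ptop(T,g)$.
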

We split the proof into several inequalities.

\begin{lemma}
For any continuous potential $g$ and $T$-invariant measure $\mu$, $\ttP_\mu(T,g) \leqslant \ttPtop(T,g)$.
\end{lemma}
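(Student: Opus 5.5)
The plan is to mimic the proof of Proposition~\ref{prop: thmu leq thtop}, inserting the potential into Lemma~\ref{lemma: classical}. It suffices to show that for every continuous partition of unity $\Phi$ and every $n \geqslant 1$,
\begin{align*}
\ttH_\mu(\Phi_0^{n-1}) + n\int_X g \intd \mu \leqslant \log \ttP(T,g,\Phi,n).
\end{align*}
Dividing by $n$ and letting $n \to \infty$ then gives $\ttP_\mu(T,g,\Phi) \leqslant \ttPtop(T,g,\Phi) \leqslant \ttPtop(T,g)$. Taking the supremum over $\Phi$ yields the claim, since $\ttP_\mu(T,g) = \sup_\Phi \ttP_\mu(T,g,\Phi)$ by definition.

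To prove the displayed inequality, I write $\Phi_0^{n-1} = \{\varphi\}$ and use $T$-invariance of $\mu$ together with $\sum_\varphi \varphi \equiv 1$:
\begin{align*}
n\int g \intd\mu = \int S_n g \intd \mu = \sum_{\varphi} \mu(\varphi S_n g).
\end{align*}
Hence the left-hand side equals
\begin{align*}
\sum_{\varphi} -\mu(\varphi)\log\mu(\varphi) + \mu\bigl(\varphi(\log\varphi + S_n g)\bigr).
\end{align*}
On the set $\{\varphi>0\}$ we have $\log \varphi + S_n g = \log(\varphi e^{S_n g}) \leqslant \log \sup \varphi e^{S_n g}$. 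Therefore
\begin{align*}
\mu\bigl(\varphi(\log\varphi + S_n g)\bigr) \leqslant \mu(\varphi)\, a_\varphi, \qquad a_\varphi \coloneqq \log \sup \varphi e^{S_n g}.
\end{align*}
Next I apply Lemma~\ref{lemma: classical} with $p_\varphi = \mu(\varphi)$ and these $a_\varphi$, which gives
\begin{align*}
\sum_\varphi p_\varphi(a_\varphi - \log p_\varphi) \leqslant \log \sum_\varphi \sup \varphi e^{S_n g} = \log \ttP(T,g,\Phi,n).
\end{align*}

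The only delicate point is degenerate terms. If $\varphi \equiv 0$, then $a_\varphi = -\infty$, but also $p_\varphi = 0$ and $\varphi \log \varphi = 0$ by convention. These terms contribute nothing to either side and can be discarded before applying the lemma. If instead $\varphi \not\equiv 0$ but $\mu(\varphi) = 0$, the term vanishes on the left and only adds a nonnegative amount on the right, so the inequality is unaffected.
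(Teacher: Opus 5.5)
Your proposal is correct and is essentially the paper's proof: both apply Lemma~\ref{lemma: classical} with $p_\varphi = \mu(\varphi)$ and $a_\varphi = \log \sup \varphi e^{S_n g}$, use the pointwise bound $\log\varphi + S_n g \leqslant a_\varphi$ on $\{\varphi>0\}$ together with $\mu(S_n g) = n\mu(g)$, and then divide by $n$, let $n\to\infty$, and take the supremum over $\Phi$. Your explicit handling of the degenerate terms ($\varphi \equiv 0$ or $\mu(\varphi)=0$) is a small piece of care that the paper leaves implicit.
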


\begin{proof}
Applying Lemma~\ref{lemma: classical}, we get
\begin{align*}
\log \ttPtop(T,g,\Phi,n) &= \log \sum_{\varphi \in \Phi_0^{n-1}} e^{\sup( \log \varphi + S_n g)} \geqslant \sum_{\varphi \in \Phi_0^{n-1}} \mu(\varphi) \left( -\log \mu(\varphi) + \sup( \log \varphi + S_n g) \right) \\
&\geqslant \sum_{\varphi \in \Phi_0^{n-1}} - \mu(\varphi) \log \mu(\varphi) + \mu(\varphi \sup( \log \varphi + S_n g) ) \\
&\geqslant \sum_{\varphi \in \Phi_0^{n-1}} - \mu(\varphi) \log \mu(\varphi) + \mu(\varphi (\log \varphi + S_n g) ) = \ttH_{\mu}(\Phi_0^{n-1}) + \mu(S_n g).
\end{align*}
Therefore, by taking the limit in $n$,
\begin{align*}
\ttPtop(T,g,\Phi) \geqslant \tth_{\mu}(T,\Phi) + \mu(g) = \ttP_{\mu}(T,g,\Phi).
\end{align*}
Taking supremum in $\Phi$ yields the claim.
\end{proof}

\begin{lemma}
For any continuous potential $g$, $\ttPtop(T,g) \leqslant \Ptop(T,g)$.
\end{lemma}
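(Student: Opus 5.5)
We will adapt the proof of Proposition~\ref{prop: thtop leq htop}, carrying the weight $e^{S_n g}$ along. Fix a continuous partition of unity $\Phi$, with $k = \#\Phi$, and fix $\ve > 0$. By uniform continuity of $g$, choose $\delta>0$ such that $|g(x)-g(y)|<\ve$ whenever $d(x,y)<\delta$. Let $\cI$ be a finite open cover of $[0,1]$ of diameter less than $\ve/2k$. Set
\begin{align*}
\cU \coloneqq \beta \vee \bigvee_{\varphi \in \Phi} \varphi^{-1}\cI ,
\end{align*}
where $\beta$ is a finite open cover of $X$ by sets of diameter less than $\delta$. Then each $U \in \cU$ satisfies two properties:
\begin{itemize}
\item[(a)] $\sup_U \varphi - \inf_U \varphi \leqslant \ve/2k$ for every $\varphi \in \Phi$;
\item[(b)] $\sup_U g - \inf_U g \leqslant \ve$.
\end{itemize}
Consequently, for every $V \in \cU_0^{n-1}$ we have $\sup_V S_n g - \inf_V S_n g \leqslant n\ve$.

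For each $n$, let $\cV_n$ be a subcover of $\cU_0^{n-1}$ minimizing $\sum_{V} \sup_V e^{S_n g}$; one could equally take a cover realizing the open-cover definition of $\Ptop$, e.g.\ $\inf_{\cV}\sum_{V\in\cV}\inf_V e^{S_ng}$ up to the $e^{n\ve}$ oscillation factor. We then split the sum over the elements of the subcover:
\begin{align*}
\ttP(T,g,\Phi,n) \leqslant \sum_{V \in \cV_n} \sum_{\varphi\in\Phi_0^{n-1}} \sup_V \varphi\, e^{S_n g} \leqslant \sum_{V\in\cV_n} \Big(\sup_V e^{S_n g}\Big)\, \ttH(\Phi_0^{n-1} \mid \mathbbm{1}_V).
\end{align*}
As in the proof of Proposition~\ref{prop: thtop leq htop}, we bound
\begin{align*}
\ttH(\Phi_0^{n-1} \mid \mathbbm{1}_V) \leqslant \prod_{i=0}^{n-1} \ttH(\Phi \mid \mathbbm{1}_{U_i}) \leqslant (1+\ve)^n ,
\end{align*}
using $V = \bigcap_i T^{-i}U_i$ and property (a).

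This yields
\begin{align*}
\tfrac1n \log \ttP(T,g,\Phi,n) \leqslant \log(1+\ve) + \tfrac1n \log \sum_{V \in \cV_n} \sup_V e^{S_n g}.
\end{align*}
Taking $n\to\infty$, the last term tends to the open-cover pressure $p(T,g,\cU)$ in the sense of \cite[Definition~9.3]{walters82book}, namely $\lim_n \frac1n \log \inf_{\cV}\sum_{V}\sup_V e^{S_ng}$. This quantity is at most $\Ptop(T,g)$; in Walters' development this is the bound $q_n \leqslant p_n \leqslant e^{n\tau}q_n$ together with $\lim q = \Ptop$. Hence
\begin{align*}
\ttPtop(T,g,\Phi) \leqslant \Ptop(T,g) + \log(1+\ve) + \ve ,
\end{align*}
where the extra $\ve$ term accounts for passing from $\sup$ to $\inf$ via property (b), if one prefers to compare with $\inf$-based definitions. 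Letting $\ve\to0$ and taking the supremum over $\Phi$ gives the claim.

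The main obstacle is the final identification: matching the quantity $\sum_{V}\sup_V e^{S_n g}$ over a minimal-weight subcover of $\cU_0^{n-1}$ with the classical $\Ptop$. The cleanest route is to invoke Walters' theorem that $\lim_n \frac1n\log p_n(T,g,\cU)$ equals $\Ptop$ up to the oscillation of $g$ on $\cU$, which is at most $\ve$ by the choice of $\beta$. If the exact statement causes difficulty, an alternative is to bound the weighted sum by spanning sets: take a $(n,\delta)$-spanning set, so that each Bowen ball lies in an element of $\cU_0^{n-1}$ by a Lebesgue-number argument, and compare with $Q_n(T,g,\delta)$.
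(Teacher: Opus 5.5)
Your proposal is correct and follows the paper's proof essentially step for step. Both use the cover $\cU$ built from the $\varphi^{-1}\cI$, weight-minimizing subcovers $\cV_n$ of $\cU_0^{n-1}$, the bound $\ttH(\Phi_0^{n-1}\mid\mathbbm{1}_V)\leqslant(1+\ve)^n$, and a final comparison with the open-cover pressure. Your refinement of $\cU$ by a small-diameter cover $\beta$ is a worthwhile addition: it lets you justify the last step $\Ptop(T,g,\cU)\leqslant\Ptop(T,g)+\ve$ directly from Walters' inequalities $q_n\leqslant Q_n$ (via a Lebesgue number of $\cU$) and $p_n\leqslant e^{n\tau_{\cU}}q_n$, whereas the paper asserts $\Ptop(T,g,\cU)\leqslant\Ptop(T,g)$ for the unrefined cover without comment.
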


\begin{proof}
We proceed very similarly as in the proof of Proposition~\ref{prop: thtop leq htop}. More precisely, let $\Phi$ be a continuous partition of unity, $\ve > 0$ and define $\cU$ the associated open cover of $X$.

For each $n$, let $\cV_n \subset \cU_0^{n-1}$ be a sub-cover minimizing $\sum_{V \in \cV} \sup_{V} e^{S_n g}$. By definition of the usual topological pressure, we get that
\begin{align*}
\lim_{n \to \infty} \frac{1}{n} \log \sum_{V \in \cV_n} \sup_{V} e^{S_n g} = \Ptop(T,g,\cU).
\end{align*} 
Thus,
\begin{align*}
\ttP(T,g,\Phi,n) &= \sum_{\varphi \in \Phi_0^{n-1}} \sup \varphi e^{S_n g} \leqslant \sum_{V \in \cV_n} \sum_{\varphi \in \Phi_0^{n-1}} \sup_V \varphi e^{S_n g} \\
&\leqslant \sum_{V \in \cV_n} \sup_V e^{S_n g} \ttH(\Phi_0^{n-1} | \mathbbm{1}_V ) \leqslant \left( \sum_{V \in \cV_n} \sup_V e^{S_n g} \right) \ttH(\Phi_0^{n-1} | (\mathbbm{1}_{\cU})_0^{n-1} ) \\
& \leqslant \left( \sum_{V \in \cV_n} \sup_V e^{S_n g} \right) (1+\ve)^n .
\end{align*}
Therefore,
\begin{align*}
\ttPtop(T,g,\Phi) \leqslant \Ptop(T,g,\cU) + \log(1+\ve) \leqslant \Ptop(T,g) + \log(1+\ve).
\end{align*}
Hence $\ttPtop(T,g,\Phi) \leqslant \Ptop(T,g)$ for every continuous partition of unity $\Phi$, which yields the claim.
\end{proof}

\begin{proof}[Proof of Theorem~\ref{thm: tPtop = Ptop}]
The variational principle for the pressure as well as the above yield
\begin{align*}
\Ptop(T,g) = \sup_{\mu} (h_\mu(T) + \mu(g)) = \sup_{\mu} ( \tth_\mu(T) + \mu(g)) \leqslant \ttPtop(T,g) \leqslant \Ptop(T,g).
\end{align*}
\end{proof}

\section{Sequences with vanishing diameters}\label{sect: vanish diam}

In this section, we give a sufficient condition for a sequence of continuous partitions of unity to be maximizing with respect to any invariant measure. To do so, we call diameter of a partition of unity $\Phi$ the quantity
\begin{align*}
\diam (\Phi) = \max \{ \diam( \supp( \varphi)) \mid \varphi \in \Phi \},
\end{align*}
where the diameter of a subset of $X$ is computed using a fixed metric on $X$.

\begin{theorem}\label{thm: vanish diam metric}
If $\Psi_n$ is a sequence of continuous partitions of unity such that $\lim_{k \to \infty} \diam (\Psi_k) = 0$, then for each $\mu \in \cM(X,T)$,
\begin{align*}
\lim_{k \to \infty} \tth_{\mu}(T,\Psi_k) = \tth_{\mu}(T).
\end{align*} 
\end{theorem}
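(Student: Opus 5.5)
Since $\tth_\mu(T,\Psi_k)\leqslant \tth_\mu(T)$ for every $k$ by definition, it suffices to show $\liminf_k \tth_\mu(T,\Psi_k)\geqslant \tth_\mu(T,\Phi)$ for every continuous partition of unity $\Phi$. The supremum over $\Phi$ defining $\tth_\mu(T)$ then yields the claim; this also covers the case $\tth_\mu(T)=\infty$.

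The basic tool is the conditional inequality. By Lemma~\ref{lemma: basic prop static} i)--iii) and sub-additivity,
\begin{align*}
\ttH_\mu(\Phi_0^{n-1}) \leqslant \ttH_\mu(\Phi_0^{n-1}\vee (\Psi_k)_0^{n-1}) = \ttH_\mu((\Psi_k)_0^{n-1}) + \ttH_\mu(\Phi_0^{n-1}\,|\,(\Psi_k)_0^{n-1}),
\end{align*}
and, exactly as at the end of the proof of Proposition~\ref{prop: thmu leq hmu},
\begin{align*}
\ttH_\mu(\Phi_0^{n-1}\,|\,(\Psi_k)_0^{n-1})\leqslant \sum_{i=0}^{n-1}\ttH_\mu(T^i\Phi\,|\,T^i\Psi_k) = n\,\ttH_\mu(\Phi\,|\,\Psi_k).
\end{align*}
Dividing by $n$ and letting $n\to\infty$ gives $\tth_\mu(T,\Phi)\leqslant \tth_\mu(T,\Psi_k)+\ttH_\mu(\Phi|\Psi_k)$. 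So the whole proof reduces to showing
\begin{align*}
\lim_{k\to\infty}\ttH_\mu(\Phi|\Psi_k)=0 \quad\text{(ideally uniformly in $\mu$)}.
\end{align*}

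To prove this limit, fix $\ve>0$ and use the uniform continuity of $\eta(t)=t\log t$ on $[0,1]$ and of each $\varphi\in\Phi$ on the compact $X$. Choose $\delta>0$ such that $|\varphi(x)-\varphi(y)|<\delta'$ whenever $d(x,y)<\delta$, with $\delta'$ chosen so that $|\eta(s)-\eta(t)|<\ve/2k$ for $|s-t|<\delta'$, where $k=\#\Phi$. For $k$ large we have $\diam(\Psi_k)<\delta$. Fix $\psi\in\Psi_k$ with $\mu(\psi)>0$ and pick any point $x_\psi\in\supp\psi$. Since $\mu_\psi$ is a probability measure supported on $\supp\psi$, on which each $\varphi$ stays within $\delta'$ of $c=\varphi(x_\psi)$, we get $|\mu_\psi(\varphi)-c|<\delta'$ and $|\eta(\varphi)-\eta(c)|<\ve/2k$ on $\supp\psi$. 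This is the same two-sided comparison as in the proof of Proposition~\ref{prop: thmu leq hmu}, with $\supp\psi$ playing the role of the atom $A$. Hence
\begin{align*}
|\mu_\psi(\eta\circ\varphi)-\eta(\mu_\psi(\varphi))|<\ve/k,
\end{align*}
and summing over $\varphi\in\Phi$ gives $\ttH_{\mu_\psi}(\Phi)<\ve$. Averaging with weights $\mu(\psi)$ gives $\ttH_\mu(\Phi|\Psi_k)<\ve$. This bound is uniform in $\mu$, which will be useful later for the tail variational principle.

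The main point to check is that $\mu_\psi$ lives on $\supp\psi$. Indeed $\mu_\psi(f)=\mu(f\psi)/\mu(\psi)$ only sees values of $f$ where $\psi>0$, so this holds, and the conditional measure argument goes through. There is no real obstacle beyond this. Combining the steps, $\tth_\mu(T,\Phi)\leqslant\liminf_k\tth_\mu(T,\Psi_k)+\ve$ for every $\ve>0$, and then taking the supremum over $\Phi$ and using Corollary~\ref{corol: eq of defs metr and top} finishes the proof.
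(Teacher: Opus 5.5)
Your proof is correct. It shares the paper's outer reduction: bound $\tth_\mu(T,\Phi)-\tth_\mu(T,\Psi_k)$ by $\ttH_\mu(\Phi\,|\,\Psi_k)$, then show the latter tends to $0$. The step that differs is how you prove that limit.

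The paper first proves an auxiliary lemma: $\tth_\mu(T)$ is already the supremum over \emph{positive} continuous partitions of unity. It does this by re-running the construction of Proposition~\ref{prop: thmu geq hmu} with a $\delta$-perturbation, and it needs the identification $\tth_\mu(T)=h_\mu(T)$ to conclude. Then, for $\Phi$ with $\inf\varphi>\ve$, it shows that $\sum_{\psi\in\Psi_k}\psi\log\mu_\psi(\varphi)$ converges uniformly to $\log\varphi$, using the ratio bound $\sup_{\supp\psi}\varphi/\inf_{\supp\psi}\varphi<1+\ve$. Positivity is required there because $\log$ is not uniformly continuous near $0$.

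You instead apply the uniform continuity of $\eta(t)=t\log t$ on $[0,1]$ directly to the conditional measures $\mu_\psi$, which are concentrated on the small sets $\supp\psi$. This is the two-sided comparison from the proof of Proposition~\ref{prop: thmu leq hmu}, with $\supp\psi$ in place of an atom. Your route buys three things:
\begin{enumerate}
\item It works for every continuous $\Phi$, so the positivity lemma disappears.
\item It never uses the comparison with the classical $h_\mu(T)$, so your appeal to Corollary~\ref{corol: eq of defs metr and top} at the end is superfluous.
\item It gives $\ttH_\mu(\Phi\,|\,\Psi_k)<\ve$ uniformly in $\mu$ for each fixed $\Phi$.
\end{enumerate}

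The paper's multiplicative (log-ratio) formulation mainly buys parallelism with Section~\ref{sect: top tail and tail var pple}. In the proof of Proposition~\ref{prop: vanish approach sup}, the comparison of $n$-fold products $\varphi\leqslant(1-\ve_0)^{-n}\sigma_n(\varphi)$ genuinely requires $\inf\varphi>0$. In the metric setting, by contrast, the bound $\ttH_\mu(\Phi_0^{n-1}\,|\,(\Psi_k)_0^{n-1})\leqslant n\,\ttH_\mu(\Phi\,|\,\Psi_k)$ reduces everything to a single additive estimate, so positivity buys nothing there.

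One cosmetic point: you use $k$ both for $\#\Phi$ and for the index of $\Psi_k$; rename one of them.
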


The proof relies on the seemingly unrelated following result.

\begin{lemma}
For any $\mu \in \cM(X,T)$, 
\begin{align*}
\tth_\mu(T) = \sup \{ \tth_\mu(T,\Phi) \mid \Phi \text{ positive continuous partition of unity} \},
\end{align*}
where a partition of unity is said to be positive if each of its function is positive.
\end{lemma}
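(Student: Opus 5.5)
The inequality ``$\geqslant$'' is immediate, since positive continuous partitions of unity are in particular continuous partitions of unity. For the reverse inequality, fix $\mu$, a continuous partition of unity $\Phi = \{\varphi_1,\dots,\varphi_k\}$ and $\ve>0$. It suffices to find a positive continuous partition of unity $\Phi^t$ with $\tth_\mu(T,\Phi^t) \geqslant \tth_\mu(T,\Phi) - \ve$. The natural candidate is the convex perturbation
\begin{align*}
\Phi^t \coloneqq \{ \varphi_i^t \coloneqq (1-t)\varphi_i + t/k \mid 1 \leqslant i \leqslant k \}, \qquad t \in (0,1).
\end{align*}
Each $\varphi_i^t$ is continuous with values in $[t/k,1]$, and the family sums to $1$, so $\Phi^t$ is a positive continuous partition of unity.

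To compare the local entropies, I will use the subadditivity machinery of Lemma~\ref{lemma: basic prop static}, exactly as in the proof of Proposition~\ref{prop: thmu leq hmu}. First,
\begin{align*}
\ttH_\mu(\Phi_0^{n-1}) \leqslant \ttH_\mu(\Phi_0^{n-1} \vee (\Phi^t)_0^{n-1}) = \ttH_\mu((\Phi^t)_0^{n-1}) + \ttH_\mu(\Phi_0^{n-1} | (\Phi^t)_0^{n-1}).
\end{align*}
Points ii) and iii) of Lemma~\ref{lemma: basic prop static}, together with $T$-invariance, then give
\begin{align*}
\ttH_\mu(\Phi_0^{n-1} | (\Phi^t)_0^{n-1}) \leqslant \sum_{i=0}^{n-1} \ttH_\mu(T^i\Phi | T^i\Phi^t) = n\,\ttH_\mu(\Phi|\Phi^t).
\end{align*}
Dividing by $n$ and letting $n\to\infty$ yields $\tth_\mu(T,\Phi) \leqslant \tth_\mu(T,\Phi^t) + \ttH_\mu(\Phi | \Phi^t)$. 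So everything reduces to showing that the static conditional quantity $\ttH_\mu(\Phi|\Phi^t)$ can be made smaller than $\ve$.

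This last step is the main obstacle, because $\ttH_\mu(\Phi|\Phi) \neq 0$ in general, as noted after Lemma~\ref{lemma: basic prop static}. Indeed $\ttH_\mu(\Phi|\Phi^t) \to \ttH_\mu(\Phi|\Phi)$ as $t\to 0$, and this limit need not be small. I would therefore not take the perturbation of $\Phi$ itself as the conditioning family. Instead, first pass through the partition $\cA$ built in the proof of Proposition~\ref{prop: thmu leq hmu}, for which $\ttH_\mu(\Phi|\mathbbm{1}_\cA) < \ve$, so that $\tth_\mu(T,\Phi) \leqslant h_\mu(T,\cA)+\ve$. Then replace $\Phi$ by a continuous partition of unity $\Phi'$ with $\tth_\mu(T,\Phi') \geqslant h_\mu(T,\cA) - \ve$, which is possible by Proposition~\ref{prop: thmu geq hmu}; its proof shows that $\Phi'$ can be chosen so that $\ttH_\mu(\mathbbm{1}_\cQ|\Phi')$ is small, with $\cQ$ approximating $\cA$. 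The goal is to work in a regime where the atoms are almost indicator functions. Concretely, I would aim to show that $\ttH_\mu(\mathbbm{1}_\cQ | \Phi'^t) \leqslant \ttH_\mu(\mathbbm{1}_\cQ|\Phi') + o(1)$ as $t\to 0$. This should follow because $\mu(\psi)\ttH_{\mu_\psi}(\mathbbm{1}_\cQ)$ depends continuously on $\psi$ in $L^1(\mu)$: it involves only the quantities $\mu(\psi\mathbbm{1}_Q)$, $\mu(\psi)$, and the function $\eta$, with no $\log\psi$ term. Moreover, conditioning on indicator-type partitions is stable under the perturbation. Chaining these estimates gives $\tth_\mu(T,\Phi'^t) \geqslant h_\mu(T,\cQ) - 2\ve \geqslant \tth_\mu(T,\Phi) - C\ve$. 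Since $\ve$ and $\Phi$ are arbitrary, the lemma follows.
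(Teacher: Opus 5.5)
Your proposal is correct and is essentially the paper's argument. The paper also takes the partition of unity $\Phi$ subordinated to the cover $\alpha$ from the proof of Proposition~\ref{prop: thmu geq hmu} and replaces it by $\frac{1}{1+(k+3)\delta}(\varphi+\delta)$, which is exactly a convex combination $(1-t)\varphi + t/\#\Phi$ as in your $\Phi'^t$. It then shows that $\ttH_\mu(\mathbbm{1}_\cQ \mid \cdot)$ of the perturbed family stays small.

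You deviate in two ways, both harmless. First, you enter through the partition $\cA$ attached to an arbitrary $\Phi$ rather than a near-optimal $\cP$. Second, you use continuity of $\ttH_\mu(\mathbbm{1}_\cQ \mid \cdot)$ as $t \to 0$ with $\Phi'$ fixed, instead of explicit estimates. The second choice even keeps the perturbation parameter separate from the neighbourhood parameter, which the paper's write-up merges into a single $\delta$.
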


\begin{proof}
One inequality is obvious from the definition of $\tth_\mu(T)$. For the reverse inequality, we proceed as in the proof of Lemma~\ref{prop: thmu geq hmu}. That is, we fix some $\ve_0 >0$ and $\mu \in \cM(X,T)$, we pick a partition $\cP$ with large entropy, we construct another partition $\cQ$ and an open cover $\alpha = \alpha_\delta$ which depends on a parameter $\delta >0$. Finally, for $\Phi=\{\varphi_{-2},\ldots, \varphi_k \}$ a continuous partition of unity subordinated to $\alpha$, we define $\Psi_\delta = \{ \frac{1}{1+(k+3)\delta}(\varphi + \delta) \mid \varphi \in \Phi \}$. Note that $\Psi_\delta$ is a positive and continuous partition of unity. We now estimate $\tth_\mu(T,\Psi_\delta)$. For this, we prove that for $\delta$ small enough,  $\ttH_\mu(\mathbbm{1}_\cQ | \Psi_\delta) < \ve_0$.

Recall that
\begin{align*}
\ttH_\mu(\mathbbm{1}_\cQ | \Psi_\delta) = \sum_{i=-2}^k \mu(\psi_i) \sum_{j=-1}^k - \mu_{\psi_i}(\mathbbm{1}_{Q_j}) \log \mu_{\psi_i}(\mathbbm{1}_{Q_j}).
\end{align*}
We split the sum into the contributions of $\psi_{-2}$, and of $\psi_i$, $i \geqslant -1$.

By uniform continuity of $\eta(x) = -x\log(x)$ on $[0,1]$, there exists $\lambda >0$ such that for all $x,y \in [0,1]$, $|x-y| < \lambda$ implies $|\eta(x)-\eta(y)| < \ve_0 /(4(k+2))$. Up to decreasing the value of $\lambda$, we can assume that $\lambda < \inf_{i \neq -2} (1 - \mu(\mathbbm{1}_{Q_i}))$.

We start with the case $i \neq -2$. For $j \neq i $ we get that
\begin{align*}
\mu_{\psi_i}(\mathbbm{1}_{Q_j}) = \frac{\delta \mu(\mathbbm{1}_{Q_j})}{\mu(\varphi_i) + \delta} < \lambda \Leftarrow \delta < \lambda \inf_{i \neq j} \frac{\mu(\varphi_i)}{\mu(\mathbbm{1}_{Q_j})},
\end{align*}
and for $j=i$,
\begin{align*}
\mu_{\psi_i}(\mathbbm{1}_{Q_i}) = \frac{\mu(\varphi_i) + \delta \mu(\mathbbm{1}_{Q_i}) }{\mu(\varphi_i) + \delta} > 1 - \lambda \Leftarrow \delta < \inf_{i \neq -2}\frac{\mu(\varphi_i)}{1 - \lambda - \mu(\mathbbm{1}_{Q_i})}.
\end{align*}
Therefore, for small enough $\delta$, the contribution of the $\psi_i$, $i \geqslant -1$ is
\begin{align*}
\sum_{i=-1}^k \mu(\psi_i) \sum_{j=-1}^k - \mu_{\psi_i}(\mathbbm{1}_{Q_j}) \log \mu_{\psi_i}(\mathbbm{1}_{Q_j}) 
&< \sum_{i = -1}^{k} \mu(\psi_i) \sum_{j\neq i} \frac{\ve_0}{4(k+2)}  + \sum_{i = -1}^{k} \mu(\psi_i) \frac{\ve_0}{4(k+2)} < \frac{\ve_0}{2}.
\end{align*}
On the other hand, the contribution of $\psi_{-2}$ is
\begin{align*}
\mu(\psi_{-2}) \sum_{j=-1}^k - \mu_{\psi_{-2}}(\mathbbm{1}_{Q_j}) \log \mu_{\psi_{-2}}(\mathbbm{1}_{Q_j}) \leqslant \mu(\psi_{-2}) \log(k+2) \leqslant \frac{\mu(\cN_{\delta}(\partial \cQ))}{1+(k+3)\delta} < \frac{\ve_0}{2},
\end{align*}
where the last inequality holds for $\delta$ small enough. Hence $\ttH_\mu(\mathbbm{1}_\cQ | \Psi_\delta) < \ve_0$. In conclusion, for any $\ve_0$ there exists a positive and continuous partition of unity $\Psi = \Psi_\delta$ such that
\begin{align*}
\tth_\mu(T,\Psi) \geqslant h_\mu(T) - 3 \ve_0,
\end{align*}
hence the claim.
\end{proof}

\begin{proof}[Proof of Theorem~\ref{thm: vanish diam metric}]
Let $\mu \in \cM(X,T)$ and $\Phi$ be a continuous partition of unity. Without loss of generality, we can assume that $\Phi$ is positive. Therefore, for every $k$,
\begin{align*}
\tth_\mu (T,\Phi) - \tth_\mu(T,\Psi_k) 
&= \lim_{n \to \infty} \frac{1}{n} \left( \ttH_\mu(\Phi_0^{n-1}) - \ttH_\mu((\Psi_k)_0^{n-1}) \right) \\
&\leqslant \lim_{n \to \infty} \frac{1}{n} \left( \ttH_\mu((\Phi \vee \Psi_k )_0^{n-1}) - \ttH_\mu((\Psi_k)_0^{n-1}) \right)\\
&\leqslant \lim_{n \to \infty} \frac{1}{n} \ttH_\mu(\Phi_0^{n-1} | (\Psi_k)_0^{n-1}) \leqslant \ttH_\mu(\Phi | \Psi_k ).
\end{align*} 
We now prove that $\ttH_\mu(\Phi | \Psi_k )$ vanishes when $k$ goes to infinity.

Rewriting the definition of the conditional entropy yields
\begin{align*}
\ttH_\mu(\Phi | \Psi_k ) = \mu \left( \sum_{ \varphi \in \Phi} \varphi \log \varphi \right) - \mu \left( \sum_{\varphi \in \Phi} \varphi \sum_{\psi \in \Psi_k} \psi \log \mu_\psi(\varphi) \right). 
\end{align*} 
Therefore, it is enough to prove that for each $\varphi \in \Phi$, the sequence of functions $\sum_{\psi \in \Psi_k} \psi \log \mu_\psi(\varphi)$ converges uniformly to $\log \varphi$.

Let $\ve > 0$ be small enough so that for every $\varphi \in \Phi$, $\inf \varphi > \ve$. Since $X$ is compact, each $\varphi \in \Phi$ is uniformly continuous: let $\delta_\varphi >0$ be associated to $\ve^2$. Denote $\delta = \min_{\varphi \in \Phi} \delta_\varphi > 0$. Take $k$ large enough so that $\diam \Psi_k < \delta$. Therefore, we get that for each $\varphi \in \Phi$ and $\psi \in \Psi_k$,
\begin{align*}
\inf_{\supp \psi} \varphi \leqslant \mu_\psi(\varphi) \leqslant \sup_{\supp \psi} \varphi.
\end{align*}
Hence, for every $x \in \supp \psi$, 
\begin{align*}
\log \frac{\inf_{\supp \psi} \varphi}{\sup_{\supp \psi} \varphi} \leqslant \log \frac{\varphi(x)}{\mu_\psi(\varphi)} \leqslant \log \frac{\sup_{\supp \psi} \varphi}{\inf_{\supp \psi} \varphi}.
\end{align*}
However, since 
\begin{align*}
\sup_{\supp \psi} \varphi -\inf_{\supp \psi} \varphi < \ve^2 < \ve \inf \varphi \leqslant \ve \inf_{\supp \psi} \varphi,
\end{align*}
we get that 
\begin{align*}
\frac{\sup_{\supp \psi} \varphi}{\inf_{\supp \psi} \varphi} < 1 + \ve.
\end{align*}
Thus, 
\begin{align*}
\left| \log \varphi - \sum_{\psi \in \Psi_k} \psi \log \mu_\psi(\varphi) \right | 
= \left| \sum_{\psi \in \Psi_k} \psi \log \frac{\varphi}{\mu_\psi(\varphi)} \right| < \log(1 + \ve) \leqslant \ve.
\end{align*}
In particular, because of the above uniform convergence, for every $k$ large enough we have $\ttH_\mu(\Phi | \Psi_k) \leqslant \ve$. Hence
\begin{align*}
\tth_\mu(T,\Phi) - \ve \leqslant \tth_\mu(T,\Psi_k).
\end{align*}
Taking the appropriate limits yield the claim.
\end{proof}

\section{Topological tail entropy and tail variational principle}\label{sect: top tail and tail var pple}

From the previous section, Theorem~\ref{thm: vanish diam metric} states that the sequence of functions $(\mu \mapsto \tth_\mu(T,\Psi_k))_k$ converges point-wise to $\mu \mapsto \tth_\mu(T)$ whenever $\diam ( \Psi_k )$ converges to zero. In this section, we are interested in the uniform converge, or more precisely its lack of uniformity. The sequence associated to $\Psi_k$ satisfies a tail variational principle in the following sense.
\begin{theorem}\label{thm: tail var ppl}
For any topological dynamical system $(X,T)$, and any sequence of continuous partitions $\Psi_k$, if $\lim_{k \to \infty} \diam ( \Psi_k ) =0$, then
\begin{align*}
\lim_{k \to \infty} \sup_{\mu \in \cM(X,T)} \tth_\mu(T) - \tth_\mu(T,\Psi_k) = h^*(T),
\end{align*}
where $h^*(T)$ is the topological tail entropy, first defined by Misiurewicz \cite{misiurewicz76b}.
\end{theorem}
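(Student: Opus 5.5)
The plan is to prove the two inequalities separately and compare the sequence $\mu \mapsto \tth_\mu(T,\Psi_k)$ with classical local entropy functions for which a tail variational principle is already known. By Corollary~\ref{corol: eq of defs metr and top} we may replace $\tth_\mu(T)$ by $h_\mu(T)$. We use Misiurewicz's definition of $h^*(T)$ through the conditional topological entropies $h^*(T\,|\,\cU)$, for open covers $\cU$ of small diameter. We also use the known tail variational principle (Ledrappier, Downarowicz, Burguet), namely $\lim_{\diam \cU \to 0} \sup_\mu \bigl(h_\mu(T) - h_\mu(T,\cU)\bigr) = h^*(T)$, where $h_\mu(T,\cU)$ is a Romagnoli-type local entropy of the cover.

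\emph{Upper bound.} Fix $k$ and $\mu$. For a fine Borel partition $\cP$, the computation in the proof of Theorem~\ref{thm: vanish diam metric} (monotonicity and Lemma~\ref{lemma: basic prop static}) gives
\begin{align*}
h_\mu(T,\cP) - \tth_\mu(T,\Psi_k) \leqslant \tth_\mu(T,\mathbbm{1}_\cP \,|\, \Psi_k).
\end{align*}
Let $\cU_k$ be the open cover by small open neighbourhoods of the sets $\supp \psi$, $\psi \in \Psi_k$. Since $\diam(\Psi_k) \to 0$, the diameter of $\cU_k$ also tends to $0$. The conditional measures $\mu_\psi$ for $\psi \in (\Psi_k)_0^{n-1}$ are supported in elements of $(\cU_k)_0^{n-1}$. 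Hence Lemma~\ref{lemma: classical}, applied inside each $\mu_\psi$, bounds $\ttH_\mu(\cP_0^{n-1}\,|\,(\Psi_k)_0^{n-1})$ by the logarithm of the maximal number of atoms of $\cP_0^{n-1}$ meeting an element of $(\cU_k)_0^{n-1}$. This is in turn controlled by $\log \sup_{V} N(\cV_0^{n-1}|_V)$ for an open cover $\cV$ refined by $\cP$. Dividing by $n$, letting $n \to \infty$, and then refining $\cP$ gives
\begin{align*}
\sup_\mu \bigl(h_\mu(T) - \tth_\mu(T,\Psi_k)\bigr) \leqslant h^*(T\,|\,\cU_k),
\end{align*}
and the right-hand side tends to $h^*(T)$ as $k \to \infty$.

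\emph{Lower bound.} Here we need $\tth_\mu(T,\Psi_k)$ to be at most a classical local entropy at a comparable (coarse) scale, uniformly in $\mu$ up to $o(1)$. The intended inequality is $\tth_\mu(T,\Psi_k) \leqslant h_\mu(T,\cU_k) + o_k(1)$. To obtain it, write $\ttH_\mu$ of a partition of unity as the entropy of the ``fuzzy'' assignment $x \mapsto (\psi(x))_\psi$. Then use concavity of $\eta$ to compare $\ttH_\mu((\Psi_k)_0^{n-1})$ with $H_\mu(\cA_0^{n-1})$ for a Borel partition $\cA \succ \cU_k$ subordinate to the supports. The idea is that a partition of unity supported in the cover carries no more information than a partition whose atoms are chosen inside the cover. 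Combined with the known tail variational principle for covers, this yields
\begin{align*}
\sup_\mu \bigl(h_\mu(T) - \tth_\mu(T,\Psi_k)\bigr) \geqslant \sup_\mu \bigl(h_\mu(T) - h_\mu(T,\cU_k)\bigr) - o_k(1) \longrightarrow h^*(T).
\end{align*}

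The main obstacle is this lower-bound comparison. Lemma~\ref{lemma: basic prop static} naturally gives inequalities in the opposite direction, and the Borel partition $\cA$ built from level sets in Proposition~\ref{prop: thmu leq hmu} is too fine: Downarowicz's example shows that fine partitions need not capture $h^*$. If the direct comparison fails, the fallback is to show $\tth_\mu(T,\Psi_k) \leqslant h^{\rm New}_\mu(T,\ve_k) + o(1)$, with Newhouse local entropy at scale $\ve_k \approx \diam(\Psi_k)$. This would use the Katok--Newhouse characterisation through Bowen balls, whose supremum gap is known to converge to $h^*(T)$.
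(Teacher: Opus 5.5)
\textbf{The lower bound has a genuine gap.} It is not just a missing estimate: the comparison you aim for, $\tth_\mu(T,\Psi_k)\leqslant h_\mu(T,\cU_k)+o_k(1)$ uniformly in $\mu$, is false in general. So is the Newhouse fallback at a scale $\ve_k$ comparable to $\diam(\Psi_k)$. The reason is that a partition of unity with small supports still detects arbitrarily fine scales through its \emph{values}.

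Here is a counterexample. Let $X$ consist of a fixed point together with pairwise disjoint clopen invariant copies $Y_m$ of the full $2$-shift, accumulating at that point, with $\diam Y_m\to 0$. For $m=m_k\to\infty$, let $\Psi_k$ consist of $\psi_1=(1-\epsilon)\mathbbm{1}_{[0]}+\epsilon\mathbbm{1}_{[1]}$ and $\psi_2=\mathbbm{1}_{Y_m}-\psi_1$ (cylinders of $Y_m$), completed by indicators of small clopen pieces elsewhere. This can be arranged so that $\diam(\Psi_k)\to 0$ while $\diam Y_m$ stays much smaller than $\diam(\Psi_k)$. For the Bernoulli measure $\mu$ on $Y_m$, the factors $\psi_{i_j}\circ T^j$ are independent, and a direct computation gives $\tth_\mu(T,\Psi_k)=\log 2+\epsilon\log\epsilon+(1-\epsilon)\log(1-\epsilon)$, which is close to $\log 2$. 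On the other hand, $\supp\psi_1=\supp\psi_2=Y_m$ lies in a single element of $\cU_k$, so $h_\mu(T,\cU_k)=0$. Likewise $Y_m$ lies in a single $\ve_k$-Bowen ball, so the Newhouse candidate at scale $\ve_k$ also vanishes at $\mu$. Your heuristic that a partition of unity supported in the cover carries no more information than a partition chosen inside the cover fails exactly where supports overlap on the support of $\mu$.

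\textbf{What is missing, and how the paper handles it.} The lower bound does not need a comparable scale; going finer is harmless. If $(h_n)_n$ is a non-decreasing entropy structure, the gaps $\sup_\nu\bigl(h_\nu-h_n(\nu)\bigr)$ are non-increasing in $n$ with limit $h^*$, so each of them is already $\geqslant h^*$. It therefore suffices to dominate $\tth_\mu(T,\Psi_k)$, uniformly in $\mu$ up to $2^{-k}$, by one member of such a structure at \emph{any} scale. Downarowicz's example obstructs Borel partitions of $X$, not fineness as such. The paper removes that obstruction by passing to a zero-dimensional principal extension $\pi:X'\to X$, and argues in four steps:
\begin{enumerate}
\item There, clopen partitions $\cP_n$ give continuous partitions of unity $\mathbbm{1}_{\cP_n}$.
\item The level-set argument of Proposition~\ref{prop: thmu leq hmu} gives $\tth_{\pi_*\nu}(T,\Psi_k)=\tth_\nu(T',\pi\Psi_k)\leqslant h_\nu(T',\cP_{n_k})+2^{-k}$, uniformly in $\nu$.
\item Burguet's results give $h^*(T)=h^*(T')=\lim_n\sup_\nu\bigl(h_\nu(T')-h_\nu(T',\cP_n)\bigr)$.
\item Principality, $h_\nu(T')=h_{\pi_*\nu}(T)$, transfers this back to $X$.
\end{enumerate}
This argument uses no diameter assumption on $\Psi_k$ at all.

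\textbf{A faulty step in your upper bound.} If $\cP$ refines $\cV$, then $N(\cV_0^{n-1}|_V)$ is bounded \emph{above} by the number of atoms of $\cP_0^{n-1}$ meeting $V$, not below, so the inequality you need goes the wrong way. The usual repair via closed approximations costs an additive $\log 2$, which must then be removed by a power trick. The paper avoids atom counting altogether. It takes a continuous partition of unity $\Phi$ whose functions oscillate by less than $\delta/\#\Phi$ on each element $U$ of a cover, so that $\sum_{\varphi\in\Phi_0^{n-1}}\sup_U\varphi\leqslant(1+\delta)^n$ for $U$ in the $n$-fold join of that cover (Propositions~\ref{prop: tail entropy bounds diff} and~\ref{prop: top tail entropy ineq}).
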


In order to prove this results, it is more convenient to introduce a definition of the topological tail entropy in terms of continuous partitions of unity. To this end, we introduce a notion of conditional topological entropy with respect to partitions of unity. Several such notions could be used. The one we chose first seems better suited for a local variational principle (discussed in Section~\ref{sect: open}). Another simpler notion is presented in Subsection~\ref{subsect: another top tail entropy}.

\subsection{Conditional and tail topological entropies}

\begin{definition}\label{def: top tail entropy V1}
Let $\Phi$ and $\Psi$ be two continuous partitions of unity. For any $\psi \in \Psi$, define
\begin{align*}
\ttH(\Phi \mid \psi) &\coloneqq \sum_{\varphi \in \Phi} \sup \varphi_{|_{\supp \psi}}.
\end{align*}
For any $n \geqslant 1$, define
\begin{align*}
\ttH_{n}(\Phi \mid \Psi) &\coloneqq \sup_{a \in \cD(\Psi_0^{n-1})} \sum_{\psi \in \Psi_0^{n-1}} a_\psi \ttH(\Phi_0^{n-1} \mid \psi),
\end{align*}
where,
\begin{align*}
\cD(\Psi) &\coloneqq \{ a = (a_\psi)_{\psi \in \Psi} \mid \inf \psi \leqslant a_\psi \leqslant \sup \psi \,\, \text{ and } \sum_{\psi \in \Psi} a_\psi =1 \}.
\end{align*}
The local conditional topological entropy of $(T,\Phi$) given $\Psi$ is
\begin{align*}
\tth(\Phi \mid \Psi) &\coloneqq \limsup_{n \to \infty} \frac{1}{n} \log \ttH_{n}(\Phi \mid \Psi).
\end{align*}
The conditional topological entropy of $T$ given $\Psi$ is
\begin{align*}
\tth(T \mid \Psi) &\coloneqq \sup \{ \tth(\Phi \mid \Psi) \mid \Phi \text{ continuous partition of unity} \}.
\end{align*}
Finally, the topological tail entropy of $T$ is
\begin{align*}
\tth^*(T) &\coloneqq \inf \{ \tth(T \mid \Psi) \mid \Psi \text{ continuous partition of unity} \}.
\end{align*}
\end{definition}

We can relate our new definition of topological tail entropy to the classical one.

\begin{proposition}\label{prop: top tail entropy ineq}
For any continuous self-map $T$ of a compact metric set $X$, $\tth^*(T) \leqslant h^*(T)$.
\end{proposition}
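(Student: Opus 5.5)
The plan is to compare directly with Misiurewicz's definition of topological tail entropy. Recall it: for finite open covers $\cU,\cV$, let $N(\cV\mid\cU)$ be the maximum over $U\in\cU$ of the minimal number of elements of $\cV$ needed to cover $U$. Set $h(T,\cV\mid\cU)=\lim_n \frac1n\log N(\cV_0^{n-1}\mid\cU_0^{n-1})$ and $h^*(T)=\inf_{\cU}\sup_{\cV} h(T,\cV\mid\cU)$.

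\textbf{Choice of $\Psi$.} Fix $\delta>0$ and an open cover $\cU$ with $\sup_{\cV}h(T,\cV\mid\cU)\leqslant h^*(T)+\delta$. Take a continuous partition of unity $\Psi$ subordinated to $\cU$, so that each $\supp\psi$ lies in some element of $\cU$. By the definition of $\tth^*(T)$ as an infimum, it suffices to show $\tth(\Phi\mid\Psi)\leqslant h^*(T)+\delta+\log(1+\ve)$ for every continuous partition of unity $\Phi$ and every $\ve>0$.

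\textbf{Removing the weights.} Since any $a\in\cD(\Psi_0^{n-1})$ has nonnegative entries summing to $1$,
\begin{align*}
\ttH_n(\Phi\mid\Psi)\leqslant \max_{\psi\in\Psi_0^{n-1}}\ttH(\Phi_0^{n-1}\mid\psi).
\end{align*}
If $\psi=\prod_{i}\psi_i\circ T^i$, then $\supp\psi\subset\bigcap_i T^{-i}\supp\psi_i$. This set is contained in an element $W$ of $\cU_0^{n-1}$.

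\textbf{Covering estimate.} Let $k=\#\Phi$ and build the open cover $\cV=\bigvee_{\varphi\in\Phi}\varphi^{-1}\cI$ exactly as in the proof of Proposition~\ref{prop: thtop leq htop}, with $\cI$ of diameter less than $\ve/2k$. Then $\ttH(\Phi\mid\mathbbm{1}_V)\leqslant 1+\ve$ for every $V\in\cV$. Cover $W$ by $N(\cV_0^{n-1}\mid\cU_0^{n-1})$ elements $V$ of $\cV_0^{n-1}$. This gives
\begin{align*}
\ttH(\Phi_0^{n-1}\mid\psi)\leqslant\sum_{V}\sum_{\varphi\in\Phi_0^{n-1}}\sup_{V}\varphi .
\end{align*}
For each such $V=\bigcap_i T^{-i}V_i$, the sup of a product is at most the product of the sups. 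Hence
\begin{align*}
\sum_{\varphi\in\Phi_0^{n-1}}\sup_V\varphi\leqslant\prod_{i=0}^{n-1}\sum_{\varphi\in\Phi}\sup_{V_i}\varphi\leqslant(1+\ve)^n .
\end{align*}
Combining, $\ttH_n(\Phi\mid\Psi)\leqslant N(\cV_0^{n-1}\mid\cU_0^{n-1})(1+\ve)^n$. Taking $\frac1n\log$ and the $\limsup$ gives
\begin{align*}
\tth(\Phi\mid\Psi)\leqslant h(T,\cV\mid\cU)+\log(1+\ve)\leqslant h^*(T)+\delta+\log(1+\ve).
\end{align*}
Letting $\ve\to0$, taking the sup over $\Phi$, then the inf over $\Psi$ and letting $\delta\to0$ yields $\tth^*(T)\leqslant h^*(T)$.

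\textbf{Main obstacle.} The delicate step is the covering estimate. One must check that the supports of elements of the joined partition $\Psi_0^{n-1}$ really sit inside single elements of $\cU_0^{n-1}$, so that Misiurewicz's conditional count applies. One must also check that the product bound $\sup_V\prod_i\varphi_i\circ T^i\leqslant\prod_i\sup_{V_i}\varphi_i$ is used with $V_i$ the $i$-th component of the cylinder $V$. I would write out this bookkeeping carefully, following the structure of the proof of Proposition~\ref{prop: thtop leq htop}.
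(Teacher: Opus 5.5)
Your proof is correct. Its core is the same covering estimate as the paper's, with your $\cU,\cV$ playing the roles of the paper's $\cV,\cU$:
\begin{itemize}
\item bound the $a$-weighted sum by the maximum over $\psi\in\Psi_0^{n-1}$;
\item place $\supp\psi$ inside a single cylinder of the conditioning cover;
\item cover that cylinder by cylinders of the fine cover attached to $\Phi$;
\item use $\sup_V\prod_i\varphi_i\circ T^i\leqslant\prod_i\sup_{V_i}\varphi_i$ to get the factor $(1+\ve)^n$.
\end{itemize}

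Where you differ is in how $\Psi$ is linked to Misiurewicz's quantity. The paper starts from an arbitrary $\Psi$ and takes the open cover by interiors of supports. It proves $\tth(T|\Psi)\leqslant h(T|\cV)$ for that cover. It then needs Misiurewicz's theorem that $h(T|\cV_k)\to h^*(T)$ along covers of vanishing diameter. You instead start from a nearly optimal cover $\cU$ and take $\Psi$ subordinated to it. So you use only the definition of $h^*(T)$ as an infimum, plus existence of subordinate partitions of unity, which is slightly more elementary.

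What the paper's route buys is a stronger statement: $\limsup_k\tth(T|\Psi_k)\leqslant h^*(T)$ for every sequence $\Psi_k$ with $\diam(\Psi_k)\to0$. This is inequality \eqref{eq: encadrement}, reused in Theorem~\ref{thm: tail var ppl} and Proposition~\ref{prop: vanish approach inf}. Your argument recovers it with one extra remark. Once $\diam(\Psi_k)$ is below a Lebesgue number of your near-optimal $\cU$, every $\supp\psi$ lies in an element of $\cU$. Your estimate then gives $\tth(T|\Psi_k)\leqslant h^*(T)+\delta$ for all large $k$.
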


\begin{proof}
Let $\Psi$ be a continuous partition of unity, and let $\cV = \{ \mathring{\supp}(\psi) \mid \psi \in \Psi \}$ be its associated open cover of $X$. Let also $\Phi$ be a continuous partition of unity. Now, notice that for every $n$,
\begin{align}\label{eq: comparison tail entropy}
\sup_{a \in \cD(\Psi_0^{n-1})} \sum_{\psi \in \Psi_0^{n-1}} a_\psi \sum_{\varphi \in \Phi_0^{n-1}} \sup_{\supp \psi} \varphi \leqslant \max_{\psi \in \Psi_0^{n-1}} \sum_{\varphi \in \Phi_0^{n-1}} \sup_{\supp \psi} \varphi \leqslant \max_{V \in \cV_0^{n-1}} \sum_{\varphi \in \Phi_0^{n-1}} \sup_{V} \varphi .
\end{align}
Denote by $V_n$ an element of $\cV_0^{n-1}$ realizing the maximum in the rightmost term. Let $\delta > 0$ and let $\cU$ be an open cover of $X$ with the property that for all $U \in \cU$ and $ \varphi \in \Phi$ 
\begin{align*}
\sup_{U} \varphi - \inf_{U} \varphi < \frac{\delta}{\# \Phi}.
\end{align*}
Let $\cU_n \subset \cU_{0}^{n-1}$ be an open cover of $V_n$ with minimal cardinality. Therefore,
\begin{align*}
\sup_{a \in \cD(\Psi_0^{n-1})} \sum_{\psi \in \Psi_0^{n-1}} a_\psi \sum_{\varphi \in \Phi_0^{n-1}} \sup_{\supp \psi} \varphi 
&\leqslant \sum_{\varphi \in \Phi_0^{n-1}} \sup_{V_n} \varphi 
\leqslant \sum_{U \in \cU_n} \sum_{\varphi \in \Phi_0^{n-1}} \sup_{V_n \cap U} \varphi \leqslant \#\cU_n \, (1+\delta)^n \\
&\leqslant H(\cU_0^{n-1} | V_n) \, (1+\delta)^n \leqslant H(\cU_0^{n-1} | \cV_0^{n-1}) \, (1+\delta)^n .
\end{align*}
By taking the appropriate limits, we get that
\begin{align*}
\tth(\Phi | \Psi ) \leqslant h(\cU | \cV) + \log(1+\delta) \leqslant h(T|\cV) + \delta. 
\end{align*}
Since this holds for any $\Phi$ and $\delta$, it yields
\begin{align*}
\tth(T|\Psi) \leqslant h(T|\cV).
\end{align*}

Now, consider a sequence $\Psi_k$ such that $\lim_k \diam (\Psi_k) = 0$, and let $\cV_k$ be the associated open covers. Then, we also have that $\lim_k \diam (\cV_k) = 0$. By \cite[Theorem~2.1]{misiurewicz76b}, we then know that $h^*(T) = \lim_k h(T|\cV_k)$. Therefore,
\begin{align}\label{eq: encadrement}
\tth^*(T) \leqslant \limsup_{k \to \infty} \tth(T|\Psi_k) \leqslant h^*(T),
\end{align}
which ends the proof.
\end{proof}

\begin{proposition}\label{prop: tail entropy bounds diff}
For any continuous partition of unity $\Psi$, it holds
\begin{align*}
\sup_{\mu \in \cM(X,T)} \tth_\mu(T) - \tth_\mu(T,\Psi) \leqslant \tth(T|\Psi).
\end{align*}
\end{proposition}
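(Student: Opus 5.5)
The plan is to fix $\mu \in \cM(X,T)$ and bound $\tth_\mu(T,\Phi) - \tth_\mu(T,\Psi)$ by $\tth(\Phi|\Psi)$ for every continuous partition of unity $\Phi$, and then take the supremum in $\Phi$. Since $\tth_\mu(T) = \sup_\Phi \tth_\mu(T,\Phi)$ by Definition~\ref{def: metric entropy}, this gives $\tth_\mu(T) - \tth_\mu(T,\Psi) \leqslant \tth(T|\Psi)$. The subtraction is legitimate because $\tth_\mu(T,\Psi) \leqslant \log \#\Psi < \infty$. Finally we take the supremum over $\mu$.

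\emph{Step 1 (reduction to conditional entropy).} As in the proof of Theorem~\ref{thm: vanish diam metric}, Lemma~\ref{lemma: basic prop static} $i)$ and $iv)$ give
\begin{align*}
\ttH_\mu(\Phi_0^{n-1}) \leqslant \ttH_\mu(\Phi_0^{n-1} \vee \Psi_0^{n-1}) = \ttH_\mu(\Psi_0^{n-1}) + \ttH_\mu(\Phi_0^{n-1} | \Psi_0^{n-1}).
\end{align*}
Dividing by $n$ and letting $n \to \infty$ yields $\tth_\mu(T,\Phi) - \tth_\mu(T,\Psi) \leqslant \limsup_n \frac1n \ttH_\mu(\Phi_0^{n-1}|\Psi_0^{n-1})$.

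\emph{Step 2 (static comparison).} This is the key step. We show that for every $n$,
\begin{align*}
\ttH_\mu(\Phi_0^{n-1} | \Psi_0^{n-1}) \leqslant \log \ttH_n(\Phi|\Psi).
\end{align*}
Write $\Phi' = \Phi_0^{n-1}$ and $\Psi' = \Psi_0^{n-1}$. For each $\psi \in \Psi'$ with $\mu(\psi) > 0$, the probability $\mu_\psi$ is carried by $\supp \psi$. We redo the proof of Proposition~\ref{prop: thmu leq thtop} with $\mu_\psi$ in place of $\mu$. That proof uses only Lemma~\ref{lemma: classical} and the bound $\log \varphi \leqslant \log \sup \varphi$, and not invariance. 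Since $\mu_\psi$-almost every point lies in $\supp \psi$, the supremum may be taken over $\supp \psi$, which gives
\begin{align*}
\ttH_{\mu_\psi}(\Phi') \leqslant \log \sum_{\varphi \in \Phi'} \sup \varphi_{|_{\supp \psi}} = \log \ttH(\Phi'|\psi).
\end{align*}
Terms with $\mu(\psi) = 0$ vanish by convention. Multiplying by $\mu(\psi)$, summing, and using concavity of $\log$ with weights $\mu(\psi)$ (these sum to $1$), we get
\begin{align*}
\ttH_\mu(\Phi'|\Psi') \leqslant \log \sum_{\psi \in \Psi'} \mu(\psi) \ttH(\Phi'|\psi).
\end{align*}
Now put $a_\psi = \mu(\psi)$. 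Then $\inf \psi \leqslant a_\psi \leqslant \sup \psi$ and $\sum_\psi a_\psi = 1$, so $a \in \cD(\Psi')$. Hence the right-hand side is at most $\log \ttH_n(\Phi|\Psi)$.

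\emph{Step 3 (conclusion).} Dividing Step~2 by $n$ and taking $\limsup$ gives $\limsup_n \frac1n \ttH_\mu(\Phi_0^{n-1}|\Psi_0^{n-1}) \leqslant \tth(\Phi|\Psi) \leqslant \tth(T|\Psi)$. Combining with Step~1 and taking the supremum over $\Phi$ and then over $\mu$ finishes the argument.

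The only delicate point I expect is Step~2. There one must check that restricting the supremum to $\supp \psi$ is justified for the non-invariant measure $\mu_\psi$; this holds because Lemma~\ref{lemma: classical} needs only a probability vector $\mu_\psi(\varphi)$. One must also check that the choice $a_\psi = \mu(\psi)$ really lies in $\cD(\Psi_0^{n-1})$; this is exactly why the definition of $\ttH_n$ uses the constraint $\inf\psi \leqslant a_\psi \leqslant \sup\psi$.
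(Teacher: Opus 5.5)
Your proof is correct and follows the paper's argument. You bound $\tth_\mu(T,\Phi)-\tth_\mu(T,\Psi)$ by the conditional entropy, compare $\ttH_\mu(\Phi_0^{n-1}|\Psi_0^{n-1})$ with $\log \ttH_n(\Phi|\Psi)$ via Lemma~\ref{lemma: classical}, and use $a_\psi=\mu(\psi)\in\cD(\Psi_0^{n-1})$. The only cosmetic difference is that you apply Lemma~\ref{lemma: classical} to each $\mu_\psi$ and then use concavity of $\log$, whereas the paper applies it once to the joint weights $\mu(\varphi\psi)$; this yields the same inequality.
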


\begin{proof}
Let $\mu \in \cM(X,T)$, $\ve_0 > 0$ and chose some partition of unity $\Phi$ such that 
\begin{align*}
\tth_\mu(T,\Phi) \geqslant \tth_\mu(T) - \ve_0.
\end{align*}
Now, since for every $n\geqslant 1$, we have that
\begin{align*}
\log \sum_{\psi \in \Psi_0^{n-1}} \mu(\psi) &\sum_{\varphi \in \Phi_0^{n-1}} \sup_{\supp \psi} \varphi 
= \log \sum_{\psi \in \Psi_0^{n-1}} \sum_{\varphi \in \Phi_0^{n-1}} \exp \left( \log \mu(\psi) + \log \sup_{\supp \psi} \varphi \right) \\
&\geqslant \sum_{\psi \in \Psi_0^{n-1}} \sum_{\varphi \in \Phi_0^{n-1}} \mu(\varphi \psi) \left( - \log \mu(\varphi \psi) + \log \mu(\psi) + \log \sup_{\supp \psi} \varphi \right) \\
&\geqslant \sum_{\psi \in \Psi_0^{n-1}} \sum_{\varphi \in \Phi_0^{n-1}} -\mu(\varphi \psi) \log \frac{\mu(\varphi \psi)}{\mu(\psi)} + \mu(\psi \varphi \log \varphi) = \ttH_\mu(\Phi_0^{n-1} | \Psi_0^{n-1} ),
\end{align*}
and for $a_\psi= \mu(\psi)$, $\psi \in \Psi_0^{n-1}$ we have that $a \in \cD(\Psi_0^{n-1})$, it follows by taking the appropriate limits
\begin{align}\label{eq: partial local tail var ppl}
\tth_\mu(\Phi | \Psi) \leqslant \tth(\Phi | \Psi).
\end{align}
Finally,
\begin{align*}
\tth_\mu(T) - \tth_\mu(T,\Psi) &\leqslant \tth_\mu(T,\Phi) - \tth_\mu(T,\Psi) + \ve_0 \leqslant \tth_\mu(\Phi | \Psi) + \ve_0 \leqslant \tth(\Phi | \Psi) + \ve_0 \\
&\leqslant \tth(T| \Psi) + \ve_0,
\end{align*}
for any $\ve_0 > 0$ and any $\mu \in \cM(X,T)$, which ends the proof.
\end{proof}

We can now prove the tail variational principle.

\begin{proof}[Proof of Theorem~\ref{thm: tail var ppl}]
According to Downarowicz and Huczek's \cite[Theorem~3.1]{DH13}, there exists a zero-dimensional principal extension $(X',T')$ of $(X,T)$, that is there is a continuous surjection $\pi : X' \to X$ such that $T \circ \pi = \pi \circ T'$ where $T'$ is a continuous self-map on the compact metric zero-dimensional set $X'$. By zero-dimensional is meant that $X'$ admits a base consisting of sets which are both closed and open (called \emph{clopen}). By principal is meant that for any $\mu \in \cM(X',T')$, $h_\mu(T') = h_{\pi_* \mu}(T)$.

Let $\Phi_k$ be a sequence of continuous partitions of unity on $X$. Therefore $\pi \Phi_k$ is a sequence of \emph{continuous} partitions of unity on $X'$. By definition of the image measure, we get that
\begin{align*}
\tth_\mu(T', \pi \Phi_k) = \tth_{\pi_* \mu}(T,\Phi_k).
\end{align*}

Furthermore, since $X'$ is compact and zero-dimensional, there exists a sequence of partitions $\cP_k$ of $X'$ into clopen sets such that $\cP_{k+1}$ is finer than $\cP_k$ and $\diam(\cP_k)$ converges to zero. Denote $\mathbbm{1}_{\cP_k}$ the associated partitions of unity. Notice that since the partitions are clopen, the partitions of unity $\mathbbm{1}_{\cP_k}$ are continuous.

As in the proof of Proposition~\ref{prop: thmu leq hmu}, for any $k$, there exists $n_k$ such that uniformly in $\mu \in \cM(X',T')$,
\begin{align*}
\tth_\mu(T',\pi \Phi_k) \leqslant \tth_\mu(T',\mathbbm{1}_{\cP_{n_k}}) + 2^{-k}.
\end{align*}
Without loss of generality, we can assume that $(n_k)_k$ is strictly increasing.

Using both Burguet's \cite[Main Theorem]{burguet09} and \cite[Lemma~5(4)]{burguet09}, it follows that $h^*(T) = h^*(T')$ and 
\begin{align*}
h^*(T') = \lim_{k \to \infty} \sup_{\mu \in \cM(X',T')} h_\mu(T') - h_\mu(T',\cP_k).
\end{align*}
Since $h_\mu(T',\cP_k) = \tth_\mu(T',\mathbbm{1}_{\cP_k})$, by combining the above we have that
\begin{align}\label{eq: reverse ineq}
\begin{split}
h^*(T) &= h^*(T') = \lim_{k \to \infty} \sup_{\mu \in \cM(X',T')} h_\mu(T') - \tth_\mu(T',\mathbbm{1}_{\cP_{n_k}}) \\
&\leqslant \liminf_{k \to \infty} \sup_{\mu \in \cM(X',T')} h_\mu(T') - \tth_\mu(T',\pi \Phi_k) + 2^{-k} \\
&\leqslant \liminf_{k \to \infty} \sup_{\mu \in \cM(X',T')} \tth_{\pi_* \mu}(T) - \tth_{\pi_* \mu}(T,\Phi_k) + 2^{-k} \\
&\leqslant \liminf_{k \to \infty} \sup_{\mu \in \cM(X,T)} \tth_{\mu}(T) - \tth_{\mu}(T,\Phi_k) + 2^{-k} \\
&\leqslant \limsup_{k \to \infty} \sup_{\mu \in \cM(X,T)} \tth_{\mu}(T) - \tth_{\mu}(T,\Phi_k) + 2^{-k} 
\leqslant \limsup_{k \to \infty} \tth(T|\Phi_k) + 2^{-k}.
\end{split}
\end{align}
Now, assuming that the $\diam(\Phi_k)$ converges to $0$ we can use \eqref{eq: encadrement}, which finishes the proof.
\end{proof}

From the end of the above proof, we can deduce
\begin{corollary}\label{corollary: equiv top tail entropy}
For any continuous self map $T$ on a compact metric set $X$, it holds 
\begin{align*}
\tth^*(T) = h^*(T).
\end{align*}
\end{corollary}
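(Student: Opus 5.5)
The inequality $\tth^*(T) \leqslant h^*(T)$ is Proposition~\ref{prop: top tail entropy ineq}, so it remains to prove $h^*(T) \leqslant \tth^*(T)$. We read this off the chain \eqref{eq: reverse ineq} in the proof of Theorem~\ref{thm: tail var ppl}. That chain holds for an \emph{arbitrary} sequence $(\Phi_k)_k$ of continuous partitions of unity; the assumption $\diam(\Phi_k) \to 0$ was only used afterwards, to invoke \eqref{eq: encadrement}. Its last line therefore gives, for any sequence,
\begin{align*}
h^*(T) \leqslant \limsup_{k \to \infty} \tth(T|\Phi_k).
\end{align*}

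Fix $\ve > 0$ and choose a continuous partition of unity $\Psi$ with $\tth(T|\Psi) \leqslant \tth^*(T) + \ve$, which is possible by the definition of $\tth^*(T)$ as an infimum. Apply the chain to the constant sequence $\Phi_k = \Psi$ for all $k$. The argument in the proof of Theorem~\ref{thm: tail var ppl} goes through unchanged. The zero-dimensional principal extension and Burguet's results do not depend on $\Phi_k$. The uniform approximation $\tth_\mu(T',\pi\Psi) \leqslant \tth_\mu(T',\mathbbm{1}_{\cP_{n_k}}) + 2^{-k}$ (coming from the proof of Proposition~\ref{prop: thmu leq hmu}) holds for each $k$ with suitably increasing $n_k$. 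This yields $h^*(T) \leqslant \tth(T|\Psi) \leqslant \tth^*(T) + \ve$. Letting $\ve \to 0$ gives the reverse inequality.

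Alternatively, one can bypass \eqref{eq: reverse ineq}: Proposition~\ref{prop: tail entropy bounds diff} gives $\sup_\mu \tth_\mu(T) - \tth_\mu(T,\Psi) \leqslant \tth(T|\Psi)$, and it suffices to know that the left-hand side is at least $h^*(T)$ for every fixed $\Psi$. That fact is exactly what the middle steps of \eqref{eq: reverse ineq} establish with a constant sequence, so both routes reduce to the same point.

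The main thing to check is that nothing in the derivation of \eqref{eq: reverse ineq} secretly used $\diam(\Phi_k) \to 0$ or required the $\Phi_k$ to vary. In particular, the passage from $h^*(T') = \lim_k \sup_\mu (h_\mu(T') - h_\mu(T',\cP_{n_k}))$ to the $\liminf$ involving $\pi\Phi_k$ only uses $h_\mu(T',\cP_{n_k}) \leqslant \tth_\mu(T',\pi\Phi_k) + 2^{-k}$ uniformly in $\mu$. This is valid for a constant sequence. With that verified, the corollary follows.
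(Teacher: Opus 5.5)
Your proposal is correct and follows the paper's proof: the paper likewise notes that the chain \eqref{eq: reverse ineq} never uses $\diam(\Phi_k)\to 0$, applies it to a sequence with $\tth(T|\Phi_k)\to\tth^*(T)$ to get $h^*(T)\leqslant\tth^*(T)$, and combines this with Proposition~\ref{prop: top tail entropy ineq}. Using a constant near-minimizer $\Phi_k=\Psi$ instead of a minimizing sequence changes nothing essential, and it makes explicit that $h^*(T)\leqslant\tth(T|\Psi)$ for every continuous partition of unity $\Psi$.
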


\begin{proof}
In the proof of Theorem~\ref{thm: tail var ppl}, the assumption on the diameter of $\Phi_k$ converging to zero is only used at the very end. Instead, by taking $\Phi_k$ so that $\tth(T | \Phi_k)$ converges to $\tth^*(T)$, it follows from \eqref{eq: reverse ineq} that $h^*(T) \leqslant \tth^*(T)$. Combined this inequality with Proposition~\ref{prop: top tail entropy ineq} finishes the proof.
\end{proof}

\subsection{The case of vanishing diameters}

Similarly to Section~\ref{sect: vanish diam}, we are interested in approaching $\tth(T,\Psi)$ and $\tth^*(T)$ using sequences of continuous partitions of unity with diameters converging to zero.

\begin{proposition}\label{prop: vanish approach inf}
For any sequence $\Phi_k$ of continuous partitions of unity with $\diam(\Phi_k)$ converging to $0$, it holds that
\begin{align*}
\lim_{k \to \infty} \tth(T| \Phi_k) = \tth^*(T).
\end{align*}
\end{proposition}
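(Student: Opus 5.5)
The plan is to squeeze $\tth(T|\Phi_k)$ between $\tth^*(T)$ and $h^*(T)$, then use Corollary~\ref{corollary: equiv top tail entropy}. Both bounds are already essentially in place.

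For the lower bound, the definition of $\tth^*(T)$ as an infimum over all continuous partitions of unity gives $\tth(T|\Phi_k) \geqslant \tth^*(T)$ for every $k$. Hence
\begin{align*}
\liminf_{k \to \infty} \tth(T|\Phi_k) \geqslant \tth^*(T).
\end{align*}

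For the upper bound, I would reuse the argument of Proposition~\ref{prop: top tail entropy ineq}. That proof establishes, for any continuous partition of unity $\Psi$ with associated open cover $\cV = \{ \mathring{\supp}(\psi) \mid \psi \in \Psi \}$, the inequality $\tth(T|\Psi) \leqslant h(T|\cV)$. Apply this to $\Psi = \Phi_k$ and write $\cV_k$ for the resulting covers. Each element of $\cV_k$ sits inside the support of a function of $\Phi_k$, so $\diam(\cV_k) \leqslant \diam(\Phi_k) \to 0$. Misiurewicz's result \cite[Theorem~2.1]{misiurewicz76b} then gives $h(T|\cV_k) \to h^*(T)$. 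This is exactly inequality \eqref{eq: encadrement}, now read along the given sequence rather than a chosen one:
\begin{align*}
\limsup_{k \to \infty} \tth(T|\Phi_k) \leqslant \lim_{k \to \infty} h(T|\cV_k) = h^*(T).
\end{align*}

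To conclude, Corollary~\ref{corollary: equiv top tail entropy} gives $h^*(T) = \tth^*(T)$. So the $\limsup$ and $\liminf$ coincide, the limit exists, and it equals $\tth^*(T)$.

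The main thing to check is that Misiurewicz's theorem applies to covers whose diameters merely tend to zero, and that the covers need not be nested. His Theorem~2.1 is stated for arbitrary sequences of open covers with vanishing diameter, but I would confirm this. If monotonicity were needed, I would fall back on the monotonicity of $h(T|\cdot)$ under refinement together with a Lebesgue number argument: for any fixed cover $\cW$, $\cV_k$ eventually refines $\cW$, so $\limsup_k h(T|\cV_k) \leqslant h(T|\cW)$. Taking the infimum over $\cW$ gives $h^*(T)$ in Misiurewicz's definition.

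I would also check one point in applying the earlier argument to the open cover $\cV$ built from $\Phi_k$. The sets $\mathring{\supp}(\psi)$ may fail to cover $X$ when $\psi^{-1}(0,1]$ is used instead. They do cover $X$, however, since $\{\psi > 0\} \subset \mathring{\supp}\psi$ and $\sum \psi = 1$.
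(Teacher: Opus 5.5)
Your proof is correct and matches the paper's argument: the paper obtains the statement directly from \eqref{eq: encadrement} and Corollary~\ref{corollary: equiv top tail entropy}. Your explicit bound $\tth(T|\Phi_k) \geqslant \tth^*(T)$ from the infimum definition supplies the $\liminf$ side, and hence the existence of the limit, which the paper leaves implicit. Your Lebesgue-number fallback is also sound: only $\limsup_k h(T|\cV_k) \leqslant h^*(T)$ is needed, and that follows from monotonicity of $h(T|\cdot)$ under refinement without Misiurewicz's Theorem~2.1.
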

\begin{proof}
It follows directly from Corollary~\ref{corollary: equiv top tail entropy} and \eqref{eq: encadrement}.
\end{proof}

\begin{proposition}\label{prop: vanish approach sup}
For any sequence $\Phi_k$ of continuous partitions of unity with $\diam(\Phi_k)$ converging to $0$ and any continuous partition of unity $\Psi$, it holds that 
\begin{align*}
\lim_{k \to \infty} \tth(\Phi_k | \Psi) = \tth(T | \Psi).
\end{align*}
\end{proposition}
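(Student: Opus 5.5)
The inequality $\tth(\Phi_k \mid \Psi) \leqslant \tth(T \mid \Psi)$ holds for every $k$ by definition of $\tth(T\mid\Psi)$ as a supremum, so only the lower bound on the $\liminf$ needs proof. My plan is to fix an arbitrary continuous partition of unity $\Phi$ and $\ve>0$. I will then show that for all $k$ large enough,
\begin{align*}
\ttH_n(\Phi \mid \Psi) \leqslant (1+\ve)^n \, \ttH_n(\Phi_k \mid \Psi) \quad \text{for every } n \geqslant 1 .
\end{align*}
Taking $\frac1n \log$ and the $\limsup$ then gives $\tth(\Phi\mid\Psi) \leqslant \tth(\Phi_k\mid\Psi) + \log(1+\ve)$. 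Taking $\liminf_k$ and then the supremum over $\Phi$ and $\ve$ concludes.

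The comparison is made separately for each $\psi \in \Psi_0^{n-1}$. It is uniform in $\psi$, so it survives the weighted sum with weights $a \in \cD(\Psi_0^{n-1})$ and the supremum over $a$. Fix $\psi$ and write $S = \supp\psi$. For $\varphi \in \Phi_0^{n-1}$, insert the partition of unity $(\Phi_k)_0^{n-1}$:
\begin{align*}
\sup_S \varphi = \sup_S \sum_{\chi \in (\Phi_k)_0^{n-1}} \varphi\chi \leqslant \sum_{\chi} \sup_S \chi \cdot \sup_{\supp \chi} \varphi .
\end{align*}
Summing over $\varphi$ gives
\begin{align*}
\ttH(\Phi_0^{n-1}\mid\psi) \leqslant \sum_{\chi \in (\Phi_k)_0^{n-1}} \sup_S \chi \cdot \sum_{\varphi\in\Phi_0^{n-1}} \sup_{\supp\chi}\varphi .
\end{align*}
It remains to bound the inner sum by $(1+\ve)^n$ uniformly in $\chi$.

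This is the same mechanism as in the proof of Proposition~\ref{prop: thtop leq htop}. Write $\chi = \prod_{i} \chi_i\circ T^i$ with $\chi_i\in\Phi_k$. Then $\supp\chi \subset \bigcap_i T^{-i}\supp\chi_i$, and $\varphi = \prod_i \varphi_i\circ T^i$. Hence
\begin{align*}
\sum_{\varphi\in\Phi_0^{n-1}} \sup_{\supp\chi}\varphi \leqslant \prod_{i=0}^{n-1} \sum_{\varphi_i\in\Phi} \sup_{\supp \chi_i} \varphi_i = \prod_{i=0}^{n-1}\ttH(\Phi \mid \chi_i).
\end{align*}
By uniform continuity of the finitely many $\varphi \in \Phi$, there is $\delta>0$ such that any set $A$ with $\diam A<\delta$ satisfies $\sup_A\varphi - \inf_A\varphi < \ve/\#\Phi$ for all $\varphi\in\Phi$. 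Pick any $x \in A$; then $\sum_\varphi \sup_A \varphi \leqslant \sum_\varphi \varphi(x) + \ve = 1+\ve$. Once $\diam(\Phi_k)<\delta$, each factor above is therefore at most $1+\ve$, which gives $\ttH(\Phi_0^{n-1}\mid\psi) \leqslant (1+\ve)^n\,\ttH((\Phi_k)_0^{n-1}\mid\psi)$, as required.

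The main point needing care is the step $\sup_S(\varphi\chi) \leqslant \sup_S\chi\cdot\sup_{\supp\chi}\varphi$ together with the inclusion $\supp\chi\subset\bigcap_i T^{-i}\supp\chi_i$. Both hold because $\varphi\chi$ vanishes off $\{\chi>0\}$, the supports are closures of these sets, and $\{\chi>0\} = \bigcap_i T^{-i}\{\chi_i>0\}$. A further technical point is that the constant $\delta$ depends only on $\Phi$ and $\ve$, not on $n$ or $\Psi$, which is exactly what makes the exponential factor $(1+\ve)^n$ harmless after normalising by $\frac1n\log$.
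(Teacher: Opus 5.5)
Your proof is correct, and it takes a different and more direct route than the paper's.

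The paper first reduces to a positive $\Phi$, using the lemma that $\tth(T|\Psi)$ can be computed over positive partitions of unity. It then invokes the combinatorial Lemma~\ref{lemma: approx small diam} to build, by regrouping $k\cdot\Phi_m$, a partition of unity $\Phi_{k,m}$ that is uniformly $2/k$-close to $\Phi$. Positivity turns this additive closeness into the pointwise domination $\varphi\leqslant(1-\ve_0)^{-n}\sigma_n(\varphi)$ on $\Phi_0^{n-1}$. The proof ends with $\tth(\Phi_{k,m}|\Psi)\leqslant\tth(k\cdot\Phi_m|\Psi)=\tth(\Phi_m|\Psi)$: regrouping can only decrease sums of suprema, and duplicating copies leaves them unchanged.

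You instead insert the partition of unity $(\Phi_k)_0^{n-1}$ inside each $\sup_{\supp\psi}$. You then factor the cost through the one-step quantities $\ttH(\Phi|\chi_i)$ of Definition~\ref{def: local cond top entropy V1}. These are at most $1+\ve$ by the same small-oscillation estimate used in Propositions~\ref{prop: thtop leq htop} and~\ref{prop: top tail entropy ineq}.

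You only need to control $\sup\varphi$ on sets of small diameter, not ratios $\varphi/\sigma(\varphi)$. So you need neither the positivity reduction nor the approximation lemma. You also get the clean comparison $\ttH_n(\Phi|\Psi)\leqslant(1+\ve)^n\,\ttH_n(\Phi_k|\Psi)$ for every $n$, as soon as $\diam(\Phi_k)<\delta(\Phi,\ve)$.

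What the paper's route buys in exchange is a structural statement about partitions of unity themselves. Once $\diam(\Phi_m)$ is small, every continuous $\Phi$ is uniformly approximated by regroupings of $k\cdot\Phi_m$, independently of the particular functional $\ttH(\cdot|\psi)$. For the proposition at hand, however, your argument is shorter and self-contained.
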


Similarly to Theorem~\ref{thm: vanish diam metric}, we will make use of the following seemingly unrelated lemma.
\begin{lemma} For any continuous partition of unity $\Psi$, it holds
\begin{align*}
\tth(T|\Psi) = \sup \{ \tth(\Phi | \Psi) \mid \Phi \text{ positive and continuous partition of unity} \}.
\end{align*} 
\end{lemma}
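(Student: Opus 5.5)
The inequality $\geqslant$ is immediate, since positive continuous partitions of unity are in particular continuous partitions of unity and $\tth(T|\Psi)$ is a supremum over all of them. For the reverse inequality I would use the same perturbation as in the positive-partition lemma of Section~\ref{sect: vanish diam}, but a purely pointwise comparison suffices here, with no measure-theoretic estimate needed.

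Fix a continuous partition of unity $\Phi = \{\varphi_1,\ldots,\varphi_k\}$ and $\delta>0$. Define
\begin{align*}
\Phi_\delta \coloneqq \Big\{ \varphi^\delta \coloneqq \tfrac{\varphi+\delta}{1+k\delta} \;\Big|\; \varphi\in\Phi \Big\}.
\end{align*}
This is a positive continuous partition of unity. The key pointwise observation is $\varphi \leqslant (1+k\delta)\,\varphi^\delta$ for every $\varphi\in\Phi$.

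Index the elements of $\Phi_0^{n-1}$ by words $(i_0,\ldots,i_{n-1})$, so that each element is $\prod_j \varphi_{i_j}\circ T^j$. Matching it with the corresponding element $\prod_j \varphi^\delta_{i_j}\circ T^j$ of $(\Phi_\delta)_0^{n-1}$ gives, pointwise,
\begin{align*}
\prod_{j=0}^{n-1}\varphi_{i_j}\circ T^j \leqslant (1+k\delta)^n \prod_{j=0}^{n-1}\varphi^\delta_{i_j}\circ T^j .
\end{align*}
Taking the supremum over $\supp\psi$ for each $\psi\in\Psi_0^{n-1}$ and summing over words yields
\begin{align*}
\ttH(\Phi_0^{n-1}\mid\psi)\leqslant (1+k\delta)^n\,\ttH((\Phi_\delta)_0^{n-1}\mid\psi).
\end{align*}
Multiplying by $a_\psi\geqslant 0$ and taking the supremum over $a\in\cD(\Psi_0^{n-1})$ gives
\begin{align*}
\ttH_n(\Phi\mid\Psi)\leqslant (1+k\delta)^n\,\ttH_n(\Phi_\delta\mid\Psi).
\end{align*}
Note that $\cD(\Psi_0^{n-1})$ does not depend on $\Phi$, so the same set of weights appears on both sides.

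Taking $\frac1n\log$ and the $\limsup$ gives
\begin{align*}
\tth(\Phi|\Psi)\leqslant \tth(\Phi_\delta|\Psi)+\log(1+k\delta)\leqslant \sup\{\tth(\Phi'|\Psi) \mid \Phi' \text{ positive continuous}\} + \log(1+k\delta).
\end{align*}
Let $\delta\to0$, with $k=\#\Phi$ fixed, and then take the supremum over $\Phi$ to conclude. The only point requiring care is the bookkeeping of the join: elements of $\Phi_0^{n-1}$ are indexed by words, possibly with repetitions as functions, and the comparison must be made word by word. Beyond that there is no real obstacle.
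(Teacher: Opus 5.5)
Your proof is correct and is essentially the paper's argument. The paper uses the same affine perturbation, written as $\varphi_{i,\delta} = (\varphi_i + \delta/N)/(1+\delta)$ instead of your $(\varphi+\delta)/(1+k\delta)$, derives the same word-by-word bound $\varphi \leqslant (1+\delta)^n \sigma_n(\varphi)$ on the join, and passes to the limit in the same way; your remarks that the weights in $\cD(\Psi_0^{n-1})$ do not depend on $\Phi$ and that the join must be indexed by words spell out steps the paper leaves implicit.
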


\begin{proof}
It is obvious from the definition that $\tth(T|\Psi) \geqslant \sup \{ \tth(\Phi | \Psi) \mid \Phi > 0 \}$. We now prove the reverse inequality.

Let $\ve_0 > 0$ and chose some continuous partition of unity $\Phi = \{ \varphi_1, \ldots , \varphi_N \}$ such that 
\begin{align*}
\tth(\Phi | \Psi ) \geqslant \tth(T|\Psi) - \ve_0.
\end{align*}
For any $\delta > 0$, let $\Phi_\delta = \{ \varphi_{i,\delta} = \frac{\varphi_i + \delta/N}{1+\delta} \mid 1 \leqslant i \leqslant N \}$. Notice that $\Phi_\delta$ is a positive and continuous partition of unity. Denote $\sigma : \Phi \to \Phi_\delta$ the bijective map such that $\sigma(\varphi_i) = \varphi_{i,\delta}$. This map can be extended into the bijective map $\sigma_n : \Phi_0^{n-1} \to (\Phi_\delta)_0^{n-1}$ in a natural way.

Therefore, for every $\varphi = \prod_{j=0}^{n-1} \varphi_{i_j} \circ T^j \in \Phi_0^{n-1}$, we have that
\begin{align*}
\frac{\varphi}{\sigma_n(\varphi)} = \prod_{j=0} (1+\delta)\frac{\varphi_{i_j}}{\varphi_{i_j} + \delta} \leqslant (1+\delta)^n,
\end{align*}
that is $\varphi \leqslant (1+\delta)^n \sigma_n(\varphi)$. From this it follows by taking the appropriate limits that
\begin{align*}
\tth(\Phi | \Psi) \leqslant \tth(\Phi_\delta | \Psi) + \log(1+\delta) \leqslant \tth(\Phi_\delta | \Psi) + \delta
\end{align*}
Since $\ve$ and $\delta$ are arbitrary, the claim follows.
\end{proof}

In order to relate the quantities $\tth(\Phi | \Psi)$, where $\Phi > 0$ or $\diam(\Phi)$ small enough, we rely on an approximation result. To state it, we need the following operation on partitions of unity. 
\begin{definition}
For $\Phi$ a partition of unity and $k \geqslant 1$, we define $k \cdot \Psi$ as the partition of unity made of $k$ copies of $\{ \psi /k \mid \psi \in \Psi \}$.
\end{definition}

\begin{lemma}\label{lemma: approx small diam}
Given a continuous partition of unity $\Phi = \{ \varphi_i \mid 1 \leqslant i \leqslant N \}$, for every $k \geqslant 1$ there exists $\delta > 0$ such that for every partition of unity $\Psi$ with $\diam ( \Psi ) < \delta$, there exists a partition $k \cdot \Psi = \bigsqcup_{1 \leqslant i \leqslant N} A_i$, such that for every $i$ 
\begin{align*}
\left| \varphi_i - \sum_{\psi \in A_i} \psi \right| \leqslant \frac{2}{k}.
\end{align*}
\end{lemma}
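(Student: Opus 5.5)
Fix $k \geqslant 1$. By uniform continuity of the finitely many $\varphi_i$ on the compact set $X$, choose $\delta > 0$ such that $d(x,y) < \delta$ implies $|\varphi_i(x) - \varphi_i(y)| < 1/k$ for every $1 \leqslant i \leqslant N$. Let $\Psi$ be any partition of unity with $\diam(\Psi) < \delta$.

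The idea is to distribute the $k$ copies of each $\psi/k$ among the indices $i$, roughly in proportion to the values of $\varphi_i$ on $\supp \psi$. If we give index $i$ exactly $n_{\psi,i}$ copies of $\psi/k$, then
\begin{align*}
\sum_{\psi' \in A_i} \psi' = \sum_{\psi \in \Psi} \frac{n_{\psi,i}}{k} \psi .
\end{align*}
So we need integers $n_{\psi,i} \geqslant 0$ with $\sum_i n_{\psi,i} = k$ for each $\psi$, and with $n_{\psi,i}/k$ uniformly close to $\varphi_i$ on $\supp \psi$.

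To build them, pick a point $x_\psi \in \supp\psi$ for each $\psi$ (if $\psi \equiv 0$, distribute its copies arbitrarily). The numbers $c_i = \varphi_i(x_\psi)$ are nonnegative and sum to $1$. A standard rounding step gives the integers:
\begin{itemize}
\item take floors $\lfloor k c_i \rfloor$, whose sum is between $k-N+1$ and $k$;
\item hand the missing units to the indices with the largest fractional parts, at most one unit each.
\end{itemize}
The result satisfies $|n_{\psi,i} - k c_i| < 1$ and $\sum_i n_{\psi,i} = k$. Define $A_i$ as the multiset consisting of $n_{\psi,i}$ copies of $\psi/k$ for each $\psi$. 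Since $\sum_i n_{\psi,i} = k$ for every $\psi$, this is a genuine partition of the $k$ copies making up $k \cdot \Psi$.

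For the estimate, fix $x \in X$ and use $\sum_\psi \psi(x) = 1$:
\begin{align*}
\Bigl| \varphi_i(x) - \sum_{\psi} \frac{n_{\psi,i}}{k}\psi(x) \Bigr| \leqslant \sum_{\psi} \psi(x) \Bigl( |\varphi_i(x) - \varphi_i(x_\psi)| + \Bigl|\varphi_i(x_\psi) - \frac{n_{\psi,i}}{k}\Bigr| \Bigr).
\end{align*}
Only terms with $\psi(x) > 0$ contribute. For those, $x$ and $x_\psi$ both lie in $\supp\psi$, so $d(x, x_\psi) \leqslant \diam(\Psi) < \delta$, and the first difference is below $1/k$. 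The second difference is below $1/k$ by the rounding. The total is therefore at most $2/k$.

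The main point requiring care is the rounding. It must keep each $|n_{\psi,i} - k c_i|$ below $1$ while the $n_{\psi,i}$ sum to exactly $k$, and this is exactly what the largest-remainder rounding provides. Everything else is uniform continuity.
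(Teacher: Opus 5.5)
Your proof is correct, and it takes a genuinely different route from the paper's. You build the construction around each $\psi \in \Psi$. Sampling $\Phi$ at a point $x_\psi \in \supp \psi$ gives a probability vector, and rounding $k$ times it to integers summing to exactly $k$ tells you how to split the $k$ copies of $\psi/k$ among the indices $i$. So the fact that the $A_i$ partition $k \cdot \Psi$ is built in, and the error bound is a one-line averaging estimate using $\sum_{\psi} \psi \equiv 1$. You leave one detail implicit: every index receiving an extra unit has positive fractional part $f_i$. This holds because if $m \geqslant 1$ units are missing then $m = \sum_i f_i$ with each $f_i < 1$, so more than $m$ indices have $f_i > 0$. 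In any case, $|n_{\psi,i} - k c_i| \leqslant 1$ would already suffice, since your continuity term is strict.

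The paper instead builds the construction around each $\varphi \in \Phi$. It bounds $\varphi$ from above by the staircase $\sum_{l<k} \frac{1}{k} \mathbbm{1}_{\{\varphi > l/k\}}$. It then replaces each indicator by the sum of those $\psi$ whose support meets $\{\varphi > l/k\}$, using one copy of $\psi/k$ per level, and controls the error through the disjointness of the $\delta$-collars of the superlevel sets. Such one-sided approximations of different $\varphi_i$ need not fit together into a partition. So the paper applies the construction to the cumulative sums $\varphi_1 + \cdots + \varphi_i$ and uses monotonicity to obtain nested subfamilies of $k \cdot \Psi$; the last is all of $k\cdot\Psi$, since the last cumulative sum is $1$. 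It then takes successive differences.

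Your approach avoids both the collar argument and this cumulative-sum and nesting step, which the paper only asserts ``without loss of generality''. The cost is a small rounding lemma and an arbitrary choice of sample points. The paper's construction is choice-free and yields the extra one-sided bound $0 \leqslant \psi'_i - \varphi'_i \leqslant 2/k$ on cumulative sums. However, only the two-sided bound $2/k$, which both arguments give, is used in the proof of Proposition~\ref{prop: vanish approach sup}.
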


\begin{proof}
Let $\Phi$ and $\Psi$ two continuous partitions of unity, with $\diam \Psi < \delta$, where $\delta > 0$ is to be determined later.

We start by approximating one function of $\Phi$. Fix some $\varphi \in \Phi$. Denote $\varphi_{[k]} = \sum_{l=0}^{k-1} \frac{1}{k} \mathbbm{1}_{(\varphi > l/k)}$. It follows that
\begin{align*}
\varphi \leqslant \varphi_{[k]} \leqslant \varphi + \frac{1}{k}
\end{align*}
For each $0 \leqslant l < k$, define $\Lambda_{l,k} = \{ \psi \in \Psi \mid \supp(\psi) \cap (\varphi > l/k) \neq \emptyset \}$. Since $\diam \Psi < \delta$, it follows that
\begin{align*}
\mathbbm{1}_{(\varphi > l/k)} \leqslant \sum_{\psi \in \Lambda_{l,k}} \psi \leqslant \mathbbm{1}_{\cN_\delta (\varphi > l/k)}.
\end{align*}
Define $\varphi_{[k,\delta]} = \sum_{l=0}^{n-1} \frac{1}{k} \mathbbm{1}_{\cN_\delta (\varphi > l/k)}$. Therefore $\varphi_{[k,\delta]} \geqslant \varphi_{[k]} \geqslant \varphi$. Furthermore, taking $\delta$ associated by the uniform continuity of $\varphi$ and $1/3k$, we get that for each $x \in \cN_\delta(\varphi > l/k) \smallsetminus (\varphi > l/k)$, $\varphi(x) \in \left[ \frac{l - 1/3}{k}, \frac{l + 1/3}{k} \right]$. In particular, the sets $\cN_\delta(\varphi > l/k) \smallsetminus (\varphi > l/k)$ are disjoint. Thus
\begin{align*}
\varphi_{[k,\delta]} - \varphi 
&\leqslant \varphi_{[k,\delta]} - \varphi_{[k]} + \frac{1}{k}
\leqslant \frac{1}{k} +  \sum_{l=0}^{k-1} \frac{1}{k} \mathbbm{1}_{\cN_\delta(\varphi > l/k) \smallsetminus (\varphi > l/k)} \leqslant \frac{2}{k}
\end{align*} 

Thus, letting $\psi_\varphi = \sum_{l=0}^{k-1} \frac{1}{k} \sum_{\psi \in \Lambda_{l,k}} \psi$, we get that $\psi_\varphi$ is obtained by summing elements of $k\cdot \Psi$ and that $0 \leqslant \psi_\varphi - \varphi \leqslant 2/k$.

 We now approximate the whole $\Phi = \{ \varphi_1, \ldots \varphi_N \}$. For this denote $\Phi' = \{ \varphi'_1, \ldots \varphi'_N \}$, where $\varphi'_1 = \varphi_1$ and $\varphi'_{i} = \varphi'_{i-1} + \varphi_i$ for $i>1$. In particular $\varphi'_N \equiv 1$. Each element of $\Phi'$ is uniformly continuous on $X$. Take $\delta$ associated to all those function and $1 / 3k$. For each $i$, there is by the above construction a function $\psi'_i$ obtained by summing together the elements of $A_i \subset k \cdot \Psi$ and such that $0 \leqslant \psi'_i - \varphi'_i \leqslant 2/k$. Since $1 = \varphi'_N \leqslant \psi'_N \leqslant 1$, we get that $A_n = k \cdot \Psi$. 
 
Furthermore, since the family of functions $\Phi'$ is increasing, we can assume without loss of generality that $A_i \subset A_{i+1}$ for every $i$. Now, define $\psi_1 = \psi'_1$ and $\psi_i = \psi'_i - \psi'_{i-1}$ for every $i > 1$, as well as $\Psi_{k,\delta} = \{ \psi_1, \ldots , \psi_N \}$. Therefore, we get that for every $i$, $|\psi_i - \varphi_i| \leqslant 2/k$, and that $\Psi_{k,\delta}$ is obtained by partitioning $k \cdot \Psi$ into subfamilies and summing their elements together. Hence the claim.
\end{proof}

\begin{proof}[Proof of Proposition~\ref{prop: vanish approach sup}]
First, assume that $\tth(T|\Psi) > 0$ (otherwise the result is straightforward). Let $\ve_0 > 0$ small enough and chose $\Phi = \{ \varphi_i \mid 1 \leqslant i \leqslant N \}$ such that $\tth(\Phi | \Psi) \geqslant \tth(T | \Psi) - \ve_0 > 0$. Without loss of generality, we can assume that $\Phi$ is positive. 

Fix $k$ large enough so that $2/k < \ve_0 \min \{ \inf \varphi \mid \varphi \in \Phi \}$ and let $\delta >0$ from Lemma~\ref{lemma: approx small diam}. Fix some large enough $m$ so that $\diam (\Phi_m) < \delta$. Therefore, there exists $\Phi_{k,m}$ approximating $\Phi$ and obtained from $k \cdot \Phi_m$. Denote by $\sigma : \Phi \to \Phi_{k,m}$ the bijection sending a function to its approximation. We want to compare $\tth(\Phi | \Psi)$ to $\tth(\Phi_{k,m} | \Psi)$.

We can naturally extend $\sigma$ into $\sigma_n : \Phi_0^{n-1} \to (\Phi_{k,m})_0^{n-1}$. Therefore, for every $\varphi = \prod_{j=0}^{n-1} \varphi_{i_j} \circ T^j \in \Phi_0^{n-1}$, we have that
\begin{align*}
\frac{\sigma_n(\varphi)}{\varphi} = \prod_{j=0}^{n-1} \left( 1 + \frac{\sigma_n(\varphi_{i_j}) - \varphi_{i_j}}{\varphi_{i_j}} \right) \circ T^j \geqslant (1-\ve_0)^n,
\end{align*}
that is, $\varphi \leqslant (1-\ve_0)^{-n} \sigma_n(\varphi)$. From this, it follows by taking the appropriate limits that
\begin{align*}
\tth(\Phi | \Psi) \leqslant \tth(\Phi_{k,m} | \Psi) - \log(1-\ve_0) \leqslant \tth(\Phi_{k,m} | \Psi) + 2 \ve_0,
\end{align*}
where the last inequality holds for $\ve_0 < 1/2$.

We now compare $\tth(\Phi_{k,m}|\Psi)$ to $\tth(\Phi_{m}|\Psi)$. First, notice that $(k \cdot \Phi_m)_{0}^{n-1} = k^n \cdot (\Phi_m)_{0}^{n-1}$. Therefore, by grouping together the identical terms in the summation, we get that for every $\psi \in \Psi_0^{n-1}$,
\begin{align*}
\sum_{\varphi \in (k \cdot \Phi_m)_0^{n-1}} \sup_{\supp \psi} \varphi = \sum_{\varphi \in (\Phi_m)_0^{n-1}} \sup_{\supp \psi} \varphi.
\end{align*}
It easily follows by taking the appropriate limits that
\begin{align*}
\tth(k\cdot \Phi_m | \Psi) = \tth( \Phi_m | \Psi).
\end{align*}
Furthermore, since $(\Phi_{k,m})_0^{n-1}$ is obtained from partitioning $(k \cdot \Phi_m)_0^{n-1}$, it follows that for any $\psi \in \Psi_0^{n-1}$,
\begin{align*}
\sum_{\varphi \in (\Phi_{k,m})_0^{n-1}} \sup_{\supp \psi} \varphi \leqslant \sum_{\varphi \in (k \cdot \Phi_m)_0^{n-1}} \sup_{\supp \psi} \varphi.
\end{align*}
Once again, taking the appropriate limits yields
\begin{align*}
\tth(\Phi_{k,m} | \Psi) \leqslant \tth(k \cdot \Phi_m | \Psi).
\end{align*}
Combining the above, we finally get that for any $\ve_0$ small enough there exists $m_0$ such that for every $m \geqslant m_0$,
\begin{align*}
\tth( \Phi_m | \Psi) \geqslant \tth(\Phi_{k,m} | \Psi) \geqslant \tth(\Phi | \Psi) -2 \ve_0 \geqslant  \tth(T | \Psi) - 3\ve_0,
\end{align*}
which ends the proof.
\end{proof}

As a direct consequence of Propositions~\ref{prop: vanish approach inf} and \ref{prop: vanish approach sup}, we get the following corollary.

\begin{corollary}\label{corollary: top tail entropy as double limit V1}
For any sequence $\Phi_n$ of continuous partitions of unity such that $\diam(\Phi_n)$ converges to $0$, it holds
\begin{align*}
h^*(T) = \lim_{n \to \infty} \lim_{k \to \infty} \tth(T, \Phi_k | \Phi_n ).
\end{align*}
\end{corollary}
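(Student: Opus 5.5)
The identity should follow by composing the two limit statements just proved. Here $\tth(T,\Phi_k \mid \Phi_n)$ is read as $\tth(\Phi_k \mid \Phi_n)$ from Definition~\ref{def: top tail entropy V1}.

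\textbf{Inner limit.} Fix $n$ and apply Proposition~\ref{prop: vanish approach sup} with $\Psi = \Phi_n$ and the sequence $(\Phi_k)_k$, whose diameters vanish by hypothesis. This gives
\begin{align*}
\lim_{k \to \infty} \tth(\Phi_k \mid \Phi_n) = \tth(T \mid \Phi_n).
\end{align*}
In particular the inner limit exists for every $n$, and the limit is a $\limsup$ as in the definition.

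\textbf{Outer limit.} Apply Proposition~\ref{prop: vanish approach inf} to the same sequence $(\Phi_n)_n$. Since $\diam(\Phi_n) \to 0$, it gives
\begin{align*}
\lim_{n \to \infty} \tth(T \mid \Phi_n) = \tth^*(T).
\end{align*}

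\textbf{Identification.} By Corollary~\ref{corollary: equiv top tail entropy}, $\tth^*(T) = h^*(T)$. Chaining the three equalities yields the claimed double-limit formula.

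\textbf{Main obstacle.} There is essentially none, because the substantive work is already done in Propositions~\ref{prop: vanish approach inf} and~\ref{prop: vanish approach sup}. The only point needing care is the bookkeeping: each inner limit genuinely exists as a limit, not merely a $\limsup$, and the possibly infinite values are handled consistently. If $h^*(T) = \infty$, the statements above still hold in $[0,\infty]$, since the cited propositions are proved for arbitrary values.
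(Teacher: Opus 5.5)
Your proposal is correct and is exactly the paper's argument: the paper states the corollary as a direct consequence of Proposition~\ref{prop: vanish approach sup} (inner limit, with $\Psi = \Phi_n$) and Proposition~\ref{prop: vanish approach inf} (outer limit), whose proof already uses the identification $\tth^*(T) = h^*(T)$ from Corollary~\ref{corollary: equiv top tail entropy}. One minor quibble: the paper's proof of Proposition~\ref{prop: vanish approach sup} tacitly assumes $\tth(T \mid \Psi) < \infty$ (it writes $\tth(T\mid\Psi) - \ve_0$), so your remark about infinite values is an easy adaptation rather than something the paper already proves.
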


\subsection{An alternative definition}\label{subsect: another top tail entropy}

We can recover $h^*(T)$ from a seemingly simpler conditional topological entropy using Definition~\ref{def: local cond top entropy V1}.
\begin{definition}
For any continuous partitions of unity $\Phi$ and $\Psi$, define
\begin{align*}
\tth_+(T,\Phi | \Psi) &\coloneqq \lim_{n \to \infty} \frac{1}{n} \log \ttH(\Phi_0^{n-1} | \Psi_0^{n-1}), \\
\tth_+^*(T) &\coloneqq \inf_{\Psi} \sup_{\Phi} \tth_+(T,\Phi | \Psi),
\end{align*} 
where the $\inf$ and $\sup$ are taken over continuous partitions of unity.
\end{definition}
Notice that in the definition of $\tth_+(T,\Phi | \Psi)$, the dependence on $\Psi$ is only through the supports of the $\psi \in\Psi_0^{n-1}$, not on their values. We first check that the limit in the definition exists.

\begin{proposition}
The sequence $n \mapsto \log \ttH(\Phi_0^{n-1} | \Psi_0^{n-1})$ is sub-additive. In particular, the sequence $\frac{1}{n} \log \ttH(\Phi_0^{n-1} | \Psi_0^{n-1})$ converges.
\end{proposition}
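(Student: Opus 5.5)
The plan is to prove sub-additivity directly from the definition $\ttH(\Phi|\Psi) = \max_{\psi\in\Psi} \sum_{\varphi\in\Phi}\sup_{\supp\psi}\varphi$ and then invoke Fekete's lemma. Fix $n,m\geqslant 1$ and write
\begin{align*}
\Phi_0^{n+m-1} = \Phi_0^{n-1}\vee T^n\Phi_0^{m-1}, \qquad \Psi_0^{n+m-1}=\Psi_0^{n-1}\vee T^n\Psi_0^{m-1}.
\end{align*}
Take an element $\psi = \psi_1\cdot(\psi_2\circ T^n)$ of $\Psi_0^{n+m-1}$, with $\psi_1\in\Psi_0^{n-1}$ and $\psi_2\in\Psi_0^{m-1}$. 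The key elementary observation is that $\supp\psi \subset \supp\psi_1\cap T^{-n}\supp\psi_2$, since the product vanishes wherever either factor does and supports are closed.

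For a term $\varphi=\varphi_1\cdot(\varphi_2\circ T^n)$ of $\Phi_0^{n+m-1}$, the nonnegativity of the functions gives
\begin{align*}
\sup_{\supp\psi}\varphi \leqslant \Big(\sup_{\supp\psi_1}\varphi_1\Big)\Big(\sup_{T^{-n}\supp\psi_2}\varphi_2\circ T^n\Big)\leqslant \Big(\sup_{\supp\psi_1}\varphi_1\Big)\Big(\sup_{\supp\psi_2}\varphi_2\Big).
\end{align*}
Summing over $\varphi_1\in\Phi_0^{n-1}$ and $\varphi_2\in\Phi_0^{m-1}$ (the sum over $\Phi_0^{n+m-1}$ is indexed by such pairs, counted with multiplicity, which is harmless), we get
\begin{align*}
\ttH(\Phi_0^{n+m-1}|\psi)\leqslant \ttH(\Phi_0^{n-1}|\psi_1)\,\ttH(\Phi_0^{m-1}|\psi_2)\leqslant \ttH(\Phi_0^{n-1}|\Psi_0^{n-1})\,\ttH(\Phi_0^{m-1}|\Psi_0^{m-1}).
\end{align*}
Taking the maximum over $\psi$ and then logarithms gives sub-additivity. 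Empty supports only occur for $\psi\equiv0$, in which case the sup is $-\infty$ or conventionally $0$; these terms do not affect the maximum and can be discarded.

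For convergence, note that $\ttH(\Phi_0^{n-1}|\Psi_0^{n-1})\geqslant 1$. Indeed, pick any nonzero $\psi$ and a point $x\in\supp\psi$; then $\sum_\varphi \varphi(x)=1$. Hence the sequence is nonnegative, and Fekete's lemma yields convergence of $\frac1n\log\ttH(\Phi_0^{n-1}|\Psi_0^{n-1})$ to its infimum.

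The only delicate step is the support inclusion under $T^{-n}$, namely that $\sup_{T^{-n}A} f\circ T^n\leqslant \sup_A f$; this holds trivially because $T^n(T^{-n}A)\subset A$.
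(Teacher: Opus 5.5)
Your proof is correct and follows the paper's argument: you factor $\varphi$ and $\psi$ into their first-$n$ and last-$m$ parts, use $\supp\psi \subset \supp\psi_1 \cap T^{-n}\supp\psi_2$ to bound $\sup_{\supp\psi}\varphi$ by $\sup_{\supp\psi_1}\varphi_1 \cdot \sup_{\supp\psi_2}\varphi_2$, then sum and take the maximum over $\psi$. Two small points you make explicit that the paper leaves implicit are welcome refinements: only the inequality $\sup_{T^{-n}A} f\circ T^n \leqslant \sup_A f$ is needed, where the paper writes an equality; and $\ttH(\Phi_0^{n-1}|\Psi_0^{n-1}) \geqslant 1$ guarantees that Fekete's lemma gives a finite limit.
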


\begin{proof}
Writing any $\varphi \in \Phi_0^{n+m-1}$ in the form $\varphi = \varphi_2 \circ T^n \, \varphi_1$ with $\varphi_1 \in \Phi_0^{n-1}$ and $\varphi_2 \in \Phi_0^{m-1}$, and similarly for $\psi \in \Psi_0^{n+m-1}$, it follows that
\begin{align*}
\sup \varphi |_{\supp \psi} &\leqslant \sup \varphi_2 \circ T^n |_{\supp \psi} \sup \varphi_1 |_{\supp \psi} \\
&\leqslant \sup \varphi_2 \circ T^n |_{\supp \psi_2 \circ T^n} \sup \varphi_1 |_{\supp \psi} = \sup \varphi_2 |_{\supp \psi_2} \sup \varphi_1 |_{\supp \psi_1}.
\end{align*}
Therefore
\begin{align*}
\sum_{\varphi \in \Phi_0^{n+m-1}} \sup \varphi |_{\supp \psi} &\leqslant \sum_{\varphi_1 \in \Phi_0^{n-1}} \sup \varphi_1 |_{\supp \psi_1} \sum_{\varphi_2 \in \Phi_0^{m-1}} \sup \varphi_2 |_{\supp \psi_2} \\
&\leqslant \ttH( \Phi_0^{n-1} | \psi_1) \ttH( \Phi_0^{m-1} | \psi_2) \leqslant \ttH( \Phi_0^{n-1} | \Psi_0^{n-1}) \ttH( \Phi_0^{m-1} | \Psi_0^{m-1}).
\end{align*}
Since this holds for every $\psi \in \Psi_0^{n+m-1}$, the sub-additivity follows.
\end{proof}

\begin{proposition}
$\tth_+^*(T) = h^*(T)$.
\end{proposition}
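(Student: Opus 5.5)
We prove the two inequalities $\tth_+^*(T) \leqslant h^*(T)$ and $\tth_+^*(T) \geqslant \tth^*(T)$, and then use Corollary~\ref{corollary: equiv top tail entropy}, which gives $\tth^*(T) = h^*(T)$.

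\emph{Lower bound.} The plan is to compare $\ttH_n(\Phi \mid \Psi)$ with $\ttH(\Phi_0^{n-1} \mid \Psi_0^{n-1})$ termwise.
For any $a \in \cD(\Psi_0^{n-1})$ the weights are nonnegative and sum to $1$. Hence
\begin{align*}
\sum_{\psi \in \Psi_0^{n-1}} a_\psi \ttH(\Phi_0^{n-1} \mid \psi) \leqslant \max_{\psi \in \Psi_0^{n-1}} \ttH(\Phi_0^{n-1} \mid \psi) = \ttH(\Phi_0^{n-1} \mid \Psi_0^{n-1}).
\end{align*}
Here both notions use the same quantity $\sum_{\varphi} \sup_{\supp \psi} \varphi$ for each $\psi$. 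So $\ttH_n(\Phi\mid\Psi) \leqslant \ttH(\Phi_0^{n-1} \mid \Psi_0^{n-1})$. Taking $\frac1n \log$ and $\limsup$ gives $\tth(\Phi\mid\Psi) \leqslant \tth_+(T,\Phi\mid\Psi)$, where the right-hand limit exists by the sub-additivity proposition just proved. Taking $\sup_\Phi$ and then $\inf_\Psi$ yields $\tth^*(T) \leqslant \tth_+^*(T)$, hence $h^*(T) \leqslant \tth_+^*(T)$.

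\emph{Upper bound.} Here we reuse the proof of Proposition~\ref{prop: top tail entropy ineq}. In \eqref{eq: comparison tail entropy} the middle quantity is exactly $\ttH(\Phi_0^{n-1}\mid\Psi_0^{n-1})$. That proof already bounds it by $\max_{V \in \cV_0^{n-1}} \sum_{\varphi} \sup_V \varphi$, where $\cV$ is the open cover by interiors of supports.

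\emph{Main obstacle.} The supports are closed, so $\supp \psi$ may be larger than the closure of any single element of $\cV_0^{n-1}$. Also, $\supp(\psi_0 \cdot \psi_1\circ T)$ is contained in $\supp\psi_0 \cap T^{-1}\supp\psi_1$, not necessarily in an open set of the join. Two fixes are available:
\begin{itemize}
\item Note that $\sup_{\supp \psi}\varphi = \sup_{\{\psi>0\}} \varphi$ by continuity of $\varphi$. Moreover $\{\psi>0\}$ lies in the corresponding element of the join of the open cover $\{\{\psi_i > 0\}\}$.
\item Alternatively, work with the closed cover by supports and replace it by a slightly larger open cover of comparable diameter.
\end{itemize}

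With the first fix, the same covering argument as in Proposition~\ref{prop: top tail entropy ineq} applies verbatim. We cover $V_n$ by a minimal subfamily of $\cU_0^{n-1}$, where $\cU$ is chosen so that the oscillation of each $\varphi\in\Phi$ on each element of $\cU$ is less than $\delta/\#\Phi$. This gives
\begin{align*}
\ttH(\Phi_0^{n-1}\mid\Psi_0^{n-1}) \leqslant H(\cU_0^{n-1}\mid\cV_0^{n-1})(1+\delta)^n.
\end{align*}
Therefore $\sup_\Phi \tth_+(T,\Phi\mid\Psi) \leqslant h(T\mid\cV)$.

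Finally, choose $\Psi_k$ with $\diam(\Psi_k)\to0$. The associated open covers then have vanishing diameter, so Misiurewicz's theorem \cite[Theorem~2.1]{misiurewicz76b} gives $h(T\mid\cV_k)\to h^*(T)$. Since $\tth_+^*(T) \leqslant \sup_\Phi \tth_+(T,\Phi\mid\Psi_k) \leqslant h(T\mid\cV_k)$ for every $k$, we get $\tth_+^*(T)\leqslant h^*(T)$, which concludes the proof.
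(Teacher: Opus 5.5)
Your proof is correct and is essentially the paper's: the paper also obtains $\tth^*(T) \leqslant \tth_+^*(T) \leqslant h^*(T)$ from the two inequalities in \eqref{eq: comparison tail entropy}. The second inequality is carried through the covering argument of Proposition~\ref{prop: top tail entropy ineq} and Misiurewicz's theorem, and the proof closes with Corollary~\ref{corollary: equiv top tail entropy}. Your remark that $\sup_{\supp \psi}\varphi = \sup_{\{\psi>0\}}\varphi \leqslant \sup_V \varphi$, for the corresponding element $V$ of the joined open cover, makes explicit a step the paper leaves implicit; note that $\supp\psi \subseteq \overline{V}$ always holds, so continuity of $\varphi$ is the only point needed there.
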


\begin{proof}
It is enough to prove that $\tth^*(T) \leqslant \tth_+^*(T) \leqslant h^*(T)$. This is a direct consequence of \eqref{eq: comparison tail entropy}.
\end{proof}

\begin{corollary}
For any sequence $\Phi_n$ of continuous partitions of unity such that $\diam(\Phi_n)$ convergences to $0$, it holds
\begin{align*}
h^*(T) = \lim_{n \to \infty} \lim_{k \to \infty} \tth_+(T,\Phi_k | \Phi_n).
\end{align*}
\end{corollary}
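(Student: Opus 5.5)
The corollary follows by sandwiching $\tth_+$ between $\tth$ and the classical conditional entropy of covers. This mirrors how Corollary~\ref{corollary: top tail entropy as double limit V1} was deduced from Propositions~\ref{prop: vanish approach inf} and~\ref{prop: vanish approach sup}.

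\textbf{Step 1 (two-sided bounds).} For continuous partitions of unity $\Phi$ and $\Psi$, inequality \eqref{eq: comparison tail entropy} gives $\ttH_n(\Phi|\Psi) \leqslant \ttH(\Phi_0^{n-1}|\Psi_0^{n-1})$. Taking limits,
\begin{align*}
\tth(\Phi|\Psi) \leqslant \tth_+(T,\Phi|\Psi).
\end{align*}
The proof of Proposition~\ref{prop: top tail entropy ineq} actually bounds the middle term of \eqref{eq: comparison tail entropy} by $H(\cU_0^{n-1}|\cV_0^{n-1})(1+\delta)^n$, where $\cV$ is the open cover by interiors of supports of $\Psi$. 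Hence
\begin{align*}
\tth_+(T,\Phi|\Psi) \leqslant h(T|\cV) + \log(1+\delta)
\end{align*}
for every $\delta>0$. A technical point: the bound uses the closed supports rather than their interiors. I expect this to cause no harm, either by running the argument with the closed cover of supports, or by using that $\sup_{\supp\psi}\varphi = \sup_{\mathring{\supp}\psi}\varphi$ whenever $\supp\psi$ is the closure of $\{\psi>0\}$, which is the case for continuous $\psi$. Therefore $\tth_+(T,\Phi|\Psi) \leqslant h(T|\cV)$ for every $\Phi$.

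\textbf{Step 2 (inner limit in $k$).} Fix $n$. Applying Step 1 with $\Phi=\Phi_k$ and $\Psi=\Phi_n$ gives
\begin{align*}
\tth(\Phi_k|\Phi_n) \leqslant \tth_+(T,\Phi_k|\Phi_n) \leqslant \sup_\Phi \tth_+(T,\Phi|\Phi_n) \leqslant h(T|\cV_n).
\end{align*}
By Proposition~\ref{prop: vanish approach sup}, $\tth(\Phi_k|\Phi_n) \to \tth(T|\Phi_n)$ as $k\to\infty$. So the $\liminf$ and $\limsup$ in $k$ of $\tth_+(T,\Phi_k|\Phi_n)$ both lie in $[\tth(T|\Phi_n), h(T|\cV_n)]$.

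\textbf{Step 3 (outer limit in $n$).} Since $\diam(\Phi_n)\to0$, also $\diam(\cV_n)\to 0$. By Proposition~\ref{prop: vanish approach inf} and Corollary~\ref{corollary: equiv top tail entropy}, $\tth(T|\Phi_n)\to h^*(T)$. By Misiurewicz's \cite[Theorem~2.1]{misiurewicz76b}, $h(T|\cV_n)\to h^*(T)$. The sandwich then gives the double limit.

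\textbf{Main obstacle.} The statement writes $\lim_k$, but the inner limit need not exist; the sandwich only controls its $\liminf$ and $\limsup$. I would either interpret the inner limit as $\liminf$ or $\limsup$ (both work by the above), or try to prove existence by adapting the approximation argument of Proposition~\ref{prop: vanish approach sup} to $\tth_+$. That adaptation would show $\tth_+(T,\Phi_k|\Phi_n) \to \sup_\Phi \tth_+(T,\Phi|\Phi_n)$. It should go through: $\ttH(\cdot|\psi)$ is a sum of suprema, so grouping $k\cdot\Phi_m$ and the comparison $\varphi\leqslant(1-\ve_0)^{-n}\sigma_n(\varphi)$ for positive $\Phi$ behave exactly as before. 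This adaptation is the step I expect to require the most care.
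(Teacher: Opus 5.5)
Your proposal is correct and is essentially the paper's argument. The paper's one-line proof combines the same sandwich $\tth(\Phi_k|\Phi_n)\leqslant\tth_+(T,\Phi_k|\Phi_n)\leqslant h(T|\cV_n)$, which comes from \eqref{eq: comparison tail entropy} and the bound behind \eqref{eq: encadrement}, with Corollary~\ref{corollary: top tail entropy as double limit V1}. The paper silently assumes the inner limit in $k$ exists, and your proposed fix closes this: the positivity reduction and the grouping of $(k\cdot\Phi_m)_0^{n-1}$ in Proposition~\ref{prop: vanish approach sup} only use that each term is a sum of suprema over $\supp\psi$, which is preserved under $\max_\psi$, so $\lim_k\tth_+(T,\Phi_k|\Phi_n)=\sup_\Phi\tth_+(T,\Phi|\Phi_n)$ exists.
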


\begin{proof}
This is a direct consequence of \eqref{eq: encadrement} and Corollary~\ref{corollary: top tail entropy as double limit V1}.
\end{proof}

\section{Topological tail pressure and variational principle}\label{sect: top tail pressure}

In this section, we explain how to extend the results from Section~\ref{sect: top tail and tail var pple} to the case of the topological tail pressure. We will rely on the work of Li, Chen and Cheng~\cite{LCC12}, where they give several equivalent definitions of the topological tail pressure $P^*(T,g)$ of a potential $g$ in terms of separated sets, spanning sets and sequences of partitions. They also proved a tail variational principle for pressure.

We give a new definition of the topological tail pressure of a continuous potential in terms of continuous partitions of unity, and we extend the tail variational principle to sequences $\tth_\mu(T,\Psi_k)$ with $\diam(\Psi_k)$ converging to zero. More precisely,

\begin{definition}[Topological tail pressure]
Let $\Phi$ and $\Psi$ be two continuous partitions of unity. 
For any $n \geqslant 1$, define
\begin{align*}
\ttP_{n}(T,g,\Phi \mid \Psi) &\coloneqq \sup_{a \in \cD(\Psi_0^{n-1})} \sum_{\psi \in \Psi_0^{n-1}} a_\psi \sum_{\varphi \in \Phi_0^{n-1}} \sup_{\supp \psi} \varphi \, e^{S_n g},
\end{align*}
where $\cD(\Psi)$ is defined as in Definition~\ref{def: top tail entropy V1}.\\
The local topological pressure of $(T,g,\Phi$) given $\Psi$ is
\begin{align*}
\ttP(T,g,\Phi \mid \Psi) &\coloneqq \limsup_{n \to \infty} \frac{1}{n} \log \ttP_{n}(T,g,\Phi \mid \Psi).
\end{align*}
The topological pressure of $(T,g)$ given $\Psi$ is
\begin{align*}
\ttP(T,g \mid \Psi) &\coloneqq \sup \{ \ttP(T,g,\Phi \mid \Psi) \mid \Phi \text{ continuous partition of unity} \}.
\end{align*}
Finally, the topological tail pressure of $(T,g)$ is
\begin{align*}
\ttP^*(T,g) &\coloneqq \inf \{ \ttP(T,g \mid \Psi) \mid \Psi \text{ continuous partition of unity} \}.
\end{align*}
\end{definition}

\begin{theorem}\label{thm: tail var pple for pressure}
For any topological dynamical system $(X,T)$ and continuous potential $g : X \to \bR$, it holds that $\ttP^*(T,g) = P^*(T,g)$. \\
Furthermore, for any sequence $\Psi_k$ of continuous partitions of unity with $\diam(\Psi_k)$ converging to zero, it holds
\begin{align*}
\lim_{k \to \infty} \sup_{\mu \in \cM(X,T)} h_\mu(T) - \tth_\mu(T,\Psi_k) + \int_X g \intd \mu = P^*(T,g).
\end{align*}
\end{theorem}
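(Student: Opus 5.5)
The plan is to mimic the entropy case of Section~\ref{sect: top tail and tail var pple}, carrying the potential $g$ along, and to use the Li--Chen--Cheng results~\cite{LCC12} in place of Misiurewicz's and Burguet's theorems. There are three steps: an upper bound $\ttP^*(T,g) \leqslant P^*(T,g)$ through open covers; a pressure version of Proposition~\ref{prop: tail entropy bounds diff}; and a lower bound through the zero-dimensional principal extension of Downarowicz--Huczek.

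\textbf{Upper bound.} I follow the proof of Proposition~\ref{prop: top tail entropy ineq}. Fix $\Psi$ with associated open cover $\cV$, a continuous partition of unity $\Phi$, and a cover $\cU$ on which each $\varphi\in\Phi$ oscillates by less than $\delta/\#\Phi$. The same chain of inequalities as in \eqref{eq: comparison tail entropy} gives
\begin{align*}
\ttP_n(T,g,\Phi\mid\Psi) \leqslant (1+\delta)^n \max_{V\in\cV_0^{n-1}} \inf_{\cU'} \sum_{U\in\cU'} \sup_{U\cap V} e^{S_n g},
\end{align*}
where $\cU'$ ranges over subfamilies of $\cU_0^{n-1}$ covering $V$; this is the conditional spanning quantity used in~\cite{LCC12} to define the conditional pressure $P(T,g,\cU\mid\cV)$. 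Taking $\limsup_n$, then the supremum over $\Phi$ and $\delta \to 0$, and finally a sequence $\Psi_k$ with vanishing diameters, the equivalence of definitions in~\cite{LCC12} should give $\limsup_k \ttP(T,g\mid\Psi_k)\leqslant P^*(T,g)$. In particular $\ttP^*(T,g)\leqslant P^*(T,g)$.

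\textbf{Pressure version of Proposition~\ref{prop: tail entropy bounds diff}.} Repeat that proof, adding $\mu(S_n g)$ in the application of Lemma~\ref{lemma: classical}; the exponent becomes $\log\mu(\psi)+\log\sup_{\supp\psi}\varphi e^{S_n g}$, bounded below pointwise as before. This yields
\begin{align*}
\tth_\mu(T)-\tth_\mu(T,\Psi)+\mu(g)\leqslant \ttP(T,g\mid\Psi)
\end{align*}
for every $\mu \in \cM(X,T)$.

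\textbf{Lower bound.} I rerun the proof of Theorem~\ref{thm: tail var ppl} on the principal zero-dimensional extension $\pi : X' \to X$, with potential $g\circ\pi$. Since $\pi$ is principal, $h_{\pi_*\mu}(T) = h_\mu(T')$ and $\mu(g\circ\pi) = \pi_*\mu(g)$. The key input is the tail variational principle of~\cite{LCC12} for clopen partitions $\cP_k$ with vanishing diameters,
\begin{align*}
\lim_{k \to \infty} \sup_{\mu} \bigl(h_\mu(T') - h_\mu(T',\cP_k) + \mu(g\circ\pi)\bigr) = P^*(T',g\circ\pi),
\end{align*}
together with the invariance $P^*(T',g\circ\pi)=P^*(T,g)$ under principal extensions. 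The chain \eqref{eq: reverse ineq}, with $\mu(g)$ added to each term, then gives $P^*(T,g)\leqslant\liminf_k\sup_\mu(\cdots)\leqslant\limsup_k\ttP(T,g\mid\Phi_k)$ for any sequence $\Phi_k$. Choosing $\Phi_k$ that nearly realise the infimum yields $P^*\leqslant\ttP^*$. Choosing $\Phi_k$ with vanishing diameters and combining with the first two steps gives the displayed limit. Note that $h_\mu(T)$ in the statement may be replaced by $\tth_\mu(T)$ thanks to Corollary~\ref{corol: eq of defs metr and top}.

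\textbf{Main obstacle.} The hard step is the invariance $P^*(T',g\circ\pi)=P^*(T,g)$ under principal extensions, which plays the role of Burguet's result in the entropy case. I would first try to derive it from the tail variational principle of~\cite{LCC12}: if that principle expresses $P^*(T,g)$ as $\lim_k \sup_\mu \bigl(\tilde h(\mu)+\mu(g)\bigr)$, where $\tilde h$ is a quantity determined by $h^{\mathrm{sex}}$-type data, then the invariance follows since $g\circ\pi$ pulls back the integral term. If that route fails, I would instead show $P^*(T,g)\leqslant\ttP^*(T,g)$ directly, using the fact that the conditional pressure differs from the conditional entropy by at most the oscillation of $S_ng$ on small sets, which should be $o(n)$ when the sets are Bowen-type.
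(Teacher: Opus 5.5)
Your three-step plan is the paper's own: the pressure version of Proposition~\ref{prop: tail entropy bounds diff} (your computation adding $\mu(S_ng)$ is exactly what is needed), the lower bound obtained by rerunning \eqref{eq: reverse ineq} on a zero-dimensional principal extension with $\mu(g)$ carried along, and an upper bound $\limsup_k\ttP(T,g\mid\Psi_k)\leqslant P^*(T,g)$ via open covers. The invariance $P^*(T',g\circ\pi)=P^*(T,g)$, which you single out as the main obstacle, is not one. The paper reads it off directly from the characterization of $P^*$ in \cite[Lemma~4.4]{LCC12}, which is essentially your first route. Your fallback would not help: for $g=0$ it already contains a direct proof of $h^*(T)\leqslant\tth^*(T)$, which even the paper only obtains through the extension and Burguet's theorem, and controlling the oscillation of $S_ng$ does not address that part.

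The step that does not go through as written is the upper bound. \cite{LCC12} contains no open-cover conditional pressure $P(T,g,\cU\mid\cV)$. Its equivalent definitions of $P^*$ use separated sets, spanning sets and partitions, and the paper explicitly notes that an open-cover version is not provided there. So there is no ``equivalence of definitions'' to invoke for your quantity $\max_V\inf_{\cU'}\sum_{U\in\cU'}\sup_{U\cap V}e^{S_ng}$, and you must supply the comparison yourself, as the paper does.

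Write $\gamma$ for your oscillation parameter and $V_n$ for the maximizing element of $\cV_0^{n-1}$. Take $\ve\geqslant2\diam(\cV)$, so that $V_n\subset B(x,\ve,n)$ for every $x\in V_n$. Let $\delta$ be a Lebesgue number of $\cU$ and $\tau_\cU=\sup\{|g(x)-g(y)|\mid d(x,y)\leqslant\diam(\cU)\}$. Each Bowen ball $B(y,\delta/2,n)$ lies in some element of $\cU_0^{n-1}$, on which $S_ng\leqslant S_ng(y)+n\tau_\cU$. A maximal $(n,\delta/2)$-separated subset of $V_n$ is $(n,\delta/2)$-spanning for $V_n$ and sits inside $B(x,\ve,n)$. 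Hence
\[
\ttP_n(T,g,\Phi\mid\Psi)\leqslant(1+\gamma)^n e^{n\tau_\cU}P_n(T,g,\delta/2,\ve),
\]
where $P_n(T,g,\delta,\ve)$ is the Bowen-ball separated-set quantity of \cite{LCC12}. Since $P_n(T,g,\delta,\ve)$ is non-increasing in $\delta$, the bound $\lim_{\delta\to0}\limsup_n\frac1n\log P_n(T,g,\delta,\ve)$ no longer depends on $\cU$, hence not on $\Phi$. Meanwhile, refining $\cU$ makes $\gamma$ and $\tau_\cU$ arbitrarily small. Letting $\diam(\Psi_k)\to0$ and invoking \cite[Lemma~3.3]{LCC12} then yields \eqref{eq: encadrement tail pressure}. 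With this bridge inserted, the rest of your argument coincides with the paper's.
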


The proof follows the same steps as the ones of Theorem~\ref{thm: tail var ppl} and Corollary~\ref{corollary: equiv top tail entropy}. We summarise the intermediate results in the following

\begin{proposition}
Let $(X,T)$ be a topological dynamical system, and $g$ a continuous potential on $X$.

\noindent
\textit{i)} If $\pi : (X',T') \to (X,T)$ is a zero-dimensional principal extension, then \[ P^*(T',g\circ \pi) = P^*(T,g). \]

\noindent
\textit{ii)} For any continuous partition of unity $\Psi$,
\begin{align*}
\sup_{\mu \in \cM(X,T)} h_\mu(T) - \tth_\mu(T,\Psi) + \int g \intd \mu \leqslant \ttP(T,g \mid \Psi).
\end{align*}

\noindent
\textit{iii)} It holds that $\ttP^*(T,g) \leqslant P^*(T,g)$.
\end{proposition}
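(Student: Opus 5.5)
Item \textit{i)} is not proved directly; we derive it from results already in the literature, in the same way that $h^*(T)=h^*(T')$ was obtained from Burguet's work in the proof of Theorem~\ref{thm: tail var ppl}. Li, Chen and Cheng~\cite{LCC12} prove a tail variational principle for pressure:
\begin{align*}
P^*(T,g)=\lim_k \sup_\mu \bigl(h_\mu(T)-h_\mu(T,\cP_k)+\mu(g)\bigr)
\end{align*}
along suitable sequences, together with an expression of $P^*$ through the entropy structure. Since $\pi$ is principal, $h_\mu(T')=h_{\pi_*\mu}(T)$ and $\int g\circ\pi \intd\mu=\int g \intd\pi_*\mu$, and principal extensions preserve entropy structures up to uniform equivalence (Downarowicz). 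The plan is to express $P^*(T,g)$ as
\begin{align*}
\lim_k \sup_\mu \bigl(\tilde u_k(\mu)+\mu(g)\bigr),
\end{align*}
where $\tilde u_k=h-h_k$ is built from an entropy structure $(h_k)$. Such an expression is invariant under uniform equivalence, because the defect functions differ by arbitrarily small amounts after passing to a later index. Transporting it through $\pi$ then gives $P^*(T',g\circ\pi)=P^*(T,g)$. If this invariance is not directly available in~\cite{LCC12}, we would instead use Ledrappier's theorem: $P^*(T,g)$ equals the supremum of $\mu(g)+h^*$-type tail contributions over measures, a formula involving only $h$, the tail defect, and $g$. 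All three are preserved by principal extensions.

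For \textit{ii)}, we adapt Proposition~\ref{prop: tail entropy bounds diff}. Fix $\mu$ and $\ve_0$, and choose $\Phi$ with $\tth_\mu(T,\Phi)\geqslant h_\mu(T)-\ve_0$. Apply Lemma~\ref{lemma: classical} to the weights $p_{\varphi\psi}=\mu(\varphi\psi)$ and the exponents $a_{\varphi\psi}=\log\mu(\psi)+\log\sup_{\supp\psi}\varphi e^{S_ng}$. This gives
\begin{align*}
\log\sum_{\psi}\mu(\psi)\sum_\varphi \sup_{\supp\psi}\varphi e^{S_ng}\geqslant \ttH_\mu(\Phi_0^{n-1}|\Psi_0^{n-1})+\mu(S_ng),
\end{align*}
using $\sup(\varphi e^{S_ng})\geqslant \varphi e^{S_ng}$ pointwise, exactly as in the lemma before Theorem~\ref{thm: tPtop = Ptop}. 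Since $(\mu(\psi))_\psi\in\cD(\Psi_0^{n-1})$, the left side is at most $\log\ttP_n(T,g,\Phi|\Psi)$. Dividing by $n$ and letting $n\to\infty$, with $\mu(S_ng)=n\mu(g)$, yields
\begin{align*}
\tth_\mu(\Phi|\Psi)+\mu(g)\leqslant \ttP(T,g|\Psi).
\end{align*}
We conclude using $h_\mu(T)-\tth_\mu(T,\Psi)\leqslant \tth_\mu(\Phi|\Psi)+\ve_0$, as before.

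For \textit{iii)}, we follow Proposition~\ref{prop: top tail entropy ineq}. Let $\cV$ be the open cover of interiors of supports. The analogue of \eqref{eq: comparison tail entropy} bounds $\ttP_n$ by
\begin{align*}
\max_{V\in\cV_0^{n-1}}\sum_\varphi\sup_V\varphi e^{S_ng}.
\end{align*}
Take a cover $\cU$ on which each $\varphi$ oscillates by less than $\delta/\#\Phi$, and cover $V_n$ by a family $\cU_n\subset\cU_0^{n-1}$ that minimizes $\sum_{U}\sup_U e^{S_ng}$. This bounds the quantity by $(1+\delta)^n\sum_{U\in\cU_n}\sup_{U}e^{S_ng}$, which is the local conditional pressure quantity of~\cite{LCC12} for $\cU$ given $\cV$. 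Taking limits gives $\ttP(T,g|\Psi)\leqslant P(T,g,\cU|\cV)+\delta$. Letting $\diam\Psi_k\to0$ and using the definition of $P^*$ through covers with vanishing diameter from~\cite{LCC12}, we obtain $\ttP^*\leqslant\limsup_k\ttP(T,g|\Psi_k)\leqslant P^*(T,g)$. The main obstacle is \textit{i)}: we have to check that the cited characterisation of $P^*$ really passes to principal extensions. The other two items are routine adaptations of the entropy case.
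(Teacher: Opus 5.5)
\textbf{The gap is in item \textit{iii)}, which is not a routine adaptation of the entropy case.}

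Items \textit{i)} and \textit{ii)} are fine. Item \textit{ii)} is in substance the paper's argument. For item \textit{i)} the paper simply quotes \cite[Lemma~4.4]{LCC12}. Your detour is a legitimate alternative, provided that variational principle is available for arbitrary systems. It runs through the tail variational principle of \cite{LCC12}, written as $\lim_k\sup_\mu (h_\mu(T)-h_k(\mu)+\mu(g))$ for an entropy structure $(h_k)$, its invariance under uniform equivalence, surjectivity of $\pi_*$ and principality. Drop the fallback to ``Ledrappier's theorem'', which is not a statement about tail pressure.

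In item \textit{iii)}, your last step claims two things: that $\sum_{U\in\cU_n}\sup_U e^{S_ng}$ is a local conditional pressure of \cite{LCC12} for $\cU$ given $\cV$, and that \cite{LCC12} characterises $P^*(T,g)$ through open covers of vanishing diameter. Neither is available there. In \cite{LCC12}, $P^*(T,g)$ is defined through separated sets, spanning sets and partitions. The paper stresses that the open-cover version is not provided and has to be compared with the separated-set one by hand. In the entropy case you could lean on Misiurewicz's $h^*(T)=\lim_k h(T|\cV_k)$, but for pressure there is nothing analogous to quote. So your chain ends at a quantity you never relate to $P^*(T,g)$.

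The missing argument (with the oscillation parameter renamed $\gamma$, as in the paper) goes as follows.
\begin{itemize}
\item Choose $\ve\geqslant 2\diam(\cV)$, so that $V_n\subset B(x,\ve,n)$ for any $x\in V_n$.
\item Let $\delta$ be a Lebesgue number of $\cU$, so that each Bowen ball $B(y,\delta/2,n)$ lies in a member of $\cU_0^{n-1}$.
\item Use a maximal $(n,\delta/2)$-separated subset of $V_n$ as a spanning set of $V_n$.
\item Replace $\sup_U e^{S_ng}$ by $e^{S_ng(y)}$ at the cost of a factor $e^{n\tau_\cU}$, where $\tau_\cU$ is the oscillation of $g$ at scale $\diam(\cU)$.
\end{itemize}
This gives $\ttP_n(T,g,\Phi\mid\Psi)\leqslant(1+\gamma)^n e^{n\tau_\cU}P_n(T,g,\delta/2,\ve)$, where $P_n$ is the separated-set quantity of \cite{LCC12} over Bowen balls. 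Only then does \cite[Lemma~3.3]{LCC12} bound $\lim_{\ve\to0}\lim_{\delta\to0}\limsup_n\frac1n\log P_n(T,g,\delta,\ve)$ by $P^*(T,g)$.

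This also repairs a second slip. Your bound on $\ttP(T,g\mid\Psi)$, a supremum over $\Phi$, has on its right-hand side a cover $\cU$ that was chosen from $\Phi$. Unlike $h(\cU\mid\cV)\leqslant h(T\mid\cV)$, the cover quantity built with $\sup_U e^{S_ng}$ has no monotonicity that absorbs this dependence. The paper gets uniformity in $\Phi$ by shrinking $\diam(\cU)$: then $\tau_\cU\to0$ by continuity of $g$, and the final bound, which already involves $\lim_{\delta\to0}$, no longer depends on $\Phi$.
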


\begin{proof}
Point \textit{i)} follows directly from the characterization of $P^*(T,g)$ given by \cite[Lemma~4.4]{LCC12}. For Point~\textit{ii)}, we proceed exactly as in the proof of Proposition~\ref{prop: tail entropy bounds diff}. For Point~\textit{iii)}, we use (as an intermediate step) a definition of $P^*(T,g)$ involving open covers, which is not provided in \cite{LCC12}, and relate it to a definition involving separated sets. We start similarly as in the proof of Proposition~\ref{prop: top tail entropy ineq}. 

Let $\phi$ and $\Psi$ be continuous partitions of unity. Let $\cV = \{ \mathring{\supp}(\psi) \mid \psi \in \Psi \}$ be an open cover of $X$. Therefore,
\begin{align*}
\ttP_n(T,g,\Phi \mid \Psi) \leqslant \max_{\psi \in \Psi_0^{n-1}} \sum_{\varphi \in \Phi_0^{n-1}} \sup_{\supp \psi} \varphi \, e^{S_n g} \leqslant \max_{V \in \cV_0^{n-1}} \sum_{\varphi \in \Phi_0^{n-1}} \sup_{V} \varphi \, e^{S_n g} 
\end{align*}
Denote by $V_n$ the element of $\cV_0^{n-1}$ maximizing the rightmost term. Let $\gamma > 0$ and let $\cU$ be an open cover of $X$ with the property: for all $\varphi \in \Phi$ and $U \in \cU$,
\begin{align*}
\sup_{U} \varphi - \inf_{U} \varphi < \frac{\gamma}{\# \Phi}.
\end{align*}
Furthermore, let $\cU_n \subset \cU_0^{n-1}$ be an open cover of $V_n$ minimizing the quantity $\sum_{U \in \cU_n} \sup_{U} e^{S_n g}$. Therefore,
\begin{align*}
\ttP_n(T,g,\Phi \mid \Psi) &\leqslant \sum_{U \in \cU_n} \sum_{\varphi \in \Phi_0^{n-1}} \sup_{ V_n \cap U} \varphi \, e^{S_n g} \leqslant \sum_{U \in \cU_n} \sup_{U} e^{S_n g} \!\!\!\! \sum_{\varphi \in \Phi_0^{n-1}} \sup_{U} \varphi \leqslant (1+\gamma)^n \sum_{U \in \cU_n} \sup_{U} e^{S_n g}.
\end{align*}
Now, let $\ve \geqslant 2 \diam(\cV)$ and let $\delta$ be a Lebesgue number for $\cU$. Denote $\tau_\cU$ the quantity
\begin{align*}
\tau_{\cU} = \sup \{ |g(x) - g(y)| \mid d(x,y) \leqslant \diam(\cU) \}.
\end{align*}
Therefore, for any $x \in V_n$, we have that $V_n \subset B(x,\ve,n)$ (the $n$-th Bowen ball centred at $x$ with radius $\ve$). Furthermore, if $F$ is an $(n,\delta/2)$-spanning set of $V_n$, then for each $x \in F$, $B(x,\delta/2,n)$ is a subset of a member of $\cU_n$, it follows that 
\begin{align*}
\ttP_n(T,g,\Phi \mid \Psi) &\leqslant (1+ \gamma)^n e^{n \tau_{\cU}} \inf \{ \sum_{x \in F} e^{S_n g(x)} \mid F \, (n, \frac{\delta}{2})\text{-spanning set of }V_n \} \\
&\leqslant (1+ \gamma)^n e^{n \tau_{\cU}} P_n(T,g,\delta/2,\ve),
\end{align*}
where as defined in \cite{LCC12}
\begin{align*}
P_n(T,g,\delta,\ve) \coloneqq  \sup_{x \in X } \sup \left\lbrace \sum_{x \in E} e^{S_n g(y)} \mid E \, \left( n, \delta \right) \text{-separated subset of }B(x,\ve,n) \right\rbrace,
\end{align*}
where the last inequality follows from classical arguments. Notice that we can decrease the diameter of $\cU$ (and thus the value of $\delta$) in the above, yielding a bound uniform in $\Phi$. That is, taking first the $\log$ and the $\limsup$ in $n$,
\begin{align*}
\ttP^*(T,g) \leqslant \ttP(T,g \mid \Psi) = \sup_{\Phi} \ttP(T,g,\Phi \mid \Psi) \leqslant \log(1 + \gamma) + \lim_{\delta \to 0} \limsup_{n \to \infty} \frac{1}{n} \log P_n(T,g,\delta/2,\ve).
\end{align*}
The term in $\tau_\cU$ vanishes because of the continuity of $g$. Now, replacing $\Psi$ by a sequence $\Psi_k$ with $\diam(\Psi_k) = \ve_k/2$ converging to zero, we get that 
\begin{align}\label{eq: encadrement tail pressure}
\begin{split}
\ttP^*(T,g) \leqslant \liminf_{k \to \infty} \ttP(T,g \mid \Psi_k) &\leqslant \log(1+ \gamma) + \lim_{\ve \to 0} \lim_{\delta \to 0} \limsup_{n \to \infty} \frac{1}{n} \log P_n(T,g,\delta,\ve) \\
&\leqslant \log(1+\gamma) + P^*(T,g),
\end{split}
\end{align}
where the last inequality follows from \cite[Lemma~3.3]{LCC12}. Since this holds for any $\gamma > 0$, Point~\textit{iii)} follows as claimed.
\end{proof}

We can now prove the theorem of this section.
\begin{proof}[Proof of Theorem~\ref{thm: tail var pple for pressure}]
The proof fo the variational principle is exactly the same as the one of Theorem~\ref{thm: tail var ppl}, replacing the use in the end of \eqref{eq: encadrement} by \eqref{eq: encadrement tail pressure}. The proof of the equivalence of the definitions is exactly as the one of Corollary~\ref{corollary: equiv top tail entropy}, using \eqref{eq: encadrement tail pressure} instead of \eqref{eq: encadrement}.
\end{proof}

\section{Entropy structures and weak entropy structures}\label{sect: entropy structure}

In this section, we use the notion of \emph{entropy structures} developed by Downarowicz~\cite{downarowicz05}. It is a ``master invariant" in the sense that it determines almost all previously known entropy invariants such as topological entropy, metric entropy function, tail entropy, the symbolic extension entropy and the symbolic extension entropy map -- see for example Downarowicz's book for a complete exposure. More precisely, we prove that if $\Phi_k$ is an increasing sequence of continuous partition of unity with $\diam(\Phi_k)$ converging to $0$, then it determines a sequence of maps uniformly equivalent to the entropy structure (Proposition~\ref{prop: increasing entropy structure with pu}). In particular, we recover the tail variational principle in this case (Corollary~\ref{corollary: tail var pple}).

Furthermore, by modifying slightly the equivalence relation, that result extends to sequences $\Phi_k$ not necessarily increasing (Theorem~\ref{thm: weak entropy structure}). Elements in the now larger equivalence class shares the same superenvelops and transfinite sequence of functions (Proposition~\ref{prop: same superenvelops and sequence}). In particular, they still determine the same other entropy invariants.

We start by recalling the definitions introduced in \cite{downarowicz05}.

\begin{definition}
A non-decreasing sequence $\cH$ of functions $h_k : \cM(X,T) \to \bR$ converging pointwise to the metric entropy map is called a candidate (to $T$).
\end{definition}
An equivalence relation on candidates can be defined as follows. 
\begin{definition}
If $\cH = (h_k)_k$ and $\cH' = (h'_\ell)_\ell$ are candidates such that for any $\gamma > 0$ and $k \geqslant 0$ there is an $\ell \geqslant 0$ such that $h_k \leqslant h'_\ell + \gamma$, then we say that $\cH'$ uniformly dominates $\cH$, and we write $\cH' \unidom \cH$. \\
If $\cH' \unidom \cH$ and $\cH \unidom \cH'$, we say that $\cH$ and $\cH'$ are uniformly equivalent. 
\end{definition}
The notion of candidate is natural because the entropy map is often obtain as the pointwise limit of non-decreasing sequences of map, which are therefore candidates. In \cite{downarowicz05}, Downarowicz proved that several classical candidates are uniformly equivalent among each other (in the case $T$ is a homeomorphism).

The entropy structure is a distinguished equivalence class of candidates. Any candidate in the entropy structure equivalence class is also called an entropy structure.
\begin{definition}\label{def: entropy structure}
Given a continuous self-map $T : X \to X$, denote $T':X' \to X'$ any zero-dimensional principal extension of $T$, and $\cA_k$ a sequence of refining partitions of $X'$ into clopen sets. The reference candidate $\cH^{ref}$ is defined on $\cM(X',T')$ as the sequence of maps $ (\nu \mapsto h_\nu(T',\cA_k))_{k \in \bN}$. \\
A candidate to $T$ is called an entropy structure if its lift to $\cM(X',T')$ is uniformly equivalent to $\cH^{ref}$.
\end{definition}
One can prove that this definition depends neither on the choice of the zero-dimensional extension, nor on the choice of the refining sequence of clopen partitions of $X'$. Notice that two candidates are uniformly equivalent if and only if their lifts are. In particular, any two entropy structures are uniformly equivalent, and any candidate uniformly equivalent to an entropy structure is itself an entropy structure. 

The candidates Downarowicz proved equivalent are entropy structures.

We start with the following result.

\begin{lemma}\label{lemma: eq class refining pu}
For any sequences $\Psi_k$ and $\Psi'_k$ of continuous partitions of unity over $X$, $\Phi_n = \bigvee_{k=0}^n \Psi_k$ and $\Phi'_n = \bigvee_{k=0}^n \Psi'_k$, if
\begin{align*}
\lim_{n \to \infty} \tth_\mu(T,\Phi_n) = \lim_{n \to \infty} \tth_\mu(T,\Phi'_n) = h_{\mu}(T),
\end{align*}
for every $\mu \in \cM(X,T)$, then the candidates $\cH = (\mu \mapsto \tth_\mu(T,\Phi_n))_n$ and $\cH' = (\mu \mapsto \tth_\mu(T,\Phi_n))_n$ are uniformly equivalent. \\
We denote by $\cH^{p.u.}$ their common equivalence class.
\end{lemma}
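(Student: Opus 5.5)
We will show that $\cH = (\mu \mapsto \tth_\mu(T,\Phi_n))_n$ and $\cH' = (\mu \mapsto \tth_\mu(T,\Phi'_n))_n$ are candidates. We then prove that each uniformly dominates the other, using a Dini-type argument based on the upper semi-continuity of conditional local entropy (Proposition~\ref{prop: upper semi-continuity}). Throughout, $\cH'$ is read with $\Phi'_n$ in place of the repeated $\Phi_n$ in the statement. We also assume $h_\mu(T)<\infty$ for all $\mu$, for instance $\htop(T)<\infty$; otherwise the differences of entropies below are not meaningful.

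\emph{Step 1: candidates.} Since $\Phi_{n+1} = \Phi_n \vee \Psi_{n+1}$, we have $(\Phi_{n+1})_0^{m-1} = (\Phi_n)_0^{m-1} \vee (\Psi_{n+1})_0^{m-1}$. By Lemma~\ref{lemma: basic prop static} i) and iv),
\begin{align*}
\ttH_\mu((\Phi_{n+1})_0^{m-1}) = \ttH_\mu((\Phi_n)_0^{m-1}) + \ttH_\mu((\Psi_{n+1})_0^{m-1} | (\Phi_n)_0^{m-1}) \geqslant \ttH_\mu((\Phi_n)_0^{m-1}).
\end{align*}
Dividing by $m$ and letting $m \to \infty$, the sequence $n \mapsto \tth_\mu(T,\Phi_n)$ is non-decreasing. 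By hypothesis it converges pointwise to $h_\mu(T)$, so $\cH$ is a candidate, and likewise $\cH'$.

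\emph{Step 2: reduction to a conditional term.} Fix $k$ and $\gamma>0$; we must find $\ell$ with $\tth_\mu(T,\Phi_k) \leqslant \tth_\mu(T,\Phi'_\ell)+\gamma$ for all $\mu$. Set $f_\ell(\mu) \coloneqq \tth_\mu(T,\Phi_k \mid \Phi'_\ell)$. As in the proof of Theorem~\ref{thm: vanish diam metric}, Lemma~\ref{lemma: basic prop static} i) at each level $n$ gives
\begin{align*}
\tth_\mu(T,\Phi_k) \leqslant \tth_\mu(T,\Phi_k\vee\Phi'_\ell) = \tth_\mu(T,\Phi'_\ell) + f_\ell(\mu).
\end{align*}
The three limits exist by sub-additivity, so this identity passes to the limit in $n$. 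It therefore suffices to show that $\sup_\mu f_\ell(\mu) \to 0$ as $\ell \to \infty$.

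\emph{Step 3: Dini argument.} This is the heart of the proof, and I expect it to be the main obstacle. The difficulty is uniformity in $\mu$; pointwise convergence alone is easy. We use three properties of $f_\ell$.
\begin{enumerate}[(a)]
\item Each $f_\ell$ is upper semi-continuous on the compact set $\cM(X,T)$, by Proposition~\ref{prop: upper semi-continuity}.
\item The sequence $(f_\ell)_\ell$ is non-increasing in $\ell$. Indeed $(\Phi'_{\ell+1})_0^{n-1} = (\Phi'_\ell)_0^{n-1} \vee (\Psi'_{\ell+1})_0^{n-1}$, so Lemma~\ref{lemma: basic prop static} iii) applies at every level $n$.
\item $f_\ell \to 0$ pointwise. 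Since
\begin{align*}
f_\ell(\mu) = \tth_\mu(T,\Phi_k\vee\Phi'_\ell) - \tth_\mu(T,\Phi'_\ell) \leqslant h_\mu(T) - \tth_\mu(T,\Phi'_\ell)
\end{align*}
by Proposition~\ref{prop: thmu leq hmu}, the hypothesis $\tth_\mu(T,\Phi'_\ell)\to h_\mu(T)<\infty$ forces $f_\ell(\mu)\to 0$, using $f_\ell \geqslant 0$.
\end{enumerate}
Dini's theorem for non-increasing sequences of upper semi-continuous functions on a compact space then gives uniform convergence. Concretely, the sets $\{f_\ell \geqslant \gamma\}$ are closed and decreasing with empty intersection, so one of them is empty. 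Hence there is $\ell$ with $f_\ell < \gamma$ on $\cM(X,T)$, and therefore $\cH' \unidom \cH$.

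Exchanging the roles of $(\Psi_k)$ and $(\Psi'_k)$ gives $\cH \unidom \cH'$, so $\cH$ and $\cH'$ are uniformly equivalent.
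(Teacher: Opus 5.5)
Your proof is correct and follows the paper's own argument: monotonicity of $\tth_\mu(T,\Phi_n)$ from Lemma~\ref{lemma: basic prop static}, reduction to the conditional term $\tth_\mu(T,\Phi_k \mid \Phi'_\ell)$, and a Dini argument on the compact set $\cM(X,T)$ using upper semi-continuity (Proposition~\ref{prop: upper semi-continuity}). You also correctly note that this conditional sequence is non-increasing in $\ell$ (the paper writes ``non-decreasing''), and you make explicit the finiteness of $h_\mu(T)$ that the paper's step $h_\mu(T)-h_\mu(T)=0$ uses implicitly.
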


\begin{proof}
In order to simplify notations, denote $h_n(\mu) =  \tth_\mu(T,\Phi_n)$ and similarly $h'_n(\mu) = \tth_\mu(T,\Phi'_n)$. First, notice that
\begin{align*}
h_{n+1}(\mu) = \tth_\mu(T,\Psi_{n+1} \vee \Phi_n) = \tth_\mu(T,\Phi_n) + \tth_\mu(T,\Psi_{n+1} | \Phi_n) \geqslant h_n(\mu).
\end{align*}
Thus $\cH$ is a candidate, as well as $\cH'$. The proof now is exactly as the one of \cite[Lemma~7.1.2]{downarowicz05}. For clarity, we repeat the argument. Note that, because of Theorem~\ref{thm: vanish diam metric},
\begin{align*}
\lim_{n \to \infty} \tth_\mu(T,\Phi'_k | \Phi_n) = \lim_{n \to \infty} \tth_\mu(T,\Phi'_k \vee \Phi_n) - \tth_\mu(T,\Phi_n) = h_\mu(T) - h_\mu(T) = 0.
\end{align*}
Hence $(\mu \mapsto\tth_\mu(T,\Phi'_k | \Phi_n))_n$ is a non-decreasing sequence of upper semi-continuous functions, defined over a compact set, converging point-wise to the continuous function constant equals to zero. Therefore the convergence is uniform. In particular, for any $k$ and $\ve > 0$, there exists $n$ such that uniformly in $\mu$,
\begin{align*}
h'_k(\mu) - h_n(\mu) \leqslant \tth_\mu(T,\Phi'_k \vee \Phi_n) - \tth_\mu(T,\Phi_n) \leqslant \ve.
\end{align*}
This is exactly $\cH \unidom \cH'$. By symmetry, exchanging the roles of $\cH$ and $\cH'$ yields $\cH' \unidom \cH$, which concludes the proof.
\end{proof}

\begin{proposition}\label{prop: increasing entropy structure with pu}
Any candidate in the class $\cH^{p.u.}$ is an entropy structure.
\end{proposition}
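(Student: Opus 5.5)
Since $\cH^{p.u.}$ is a single uniform equivalence class (Lemma~\ref{lemma: eq class refining pu}) and uniform equivalence is preserved under lifting, it suffices to exhibit \emph{one} candidate in $\cH^{p.u.}$ whose lift to a zero-dimensional principal extension is uniformly equivalent to $\cH^{ref}$. The plan is to build this candidate directly from a principal extension.

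\textbf{Setup.} Fix, as in the proof of Theorem~\ref{thm: tail var ppl}, a zero-dimensional principal extension $\pi : (X',T') \to (X,T)$ (Downarowicz--Huczek) and a refining sequence of clopen partitions $\cA_k$ of $X'$ with vanishing diameters. Fix also a sequence $\Psi_k$ of continuous partitions of unity on $X$ with $\diam(\Psi_k) \to 0$, and set $\Phi_n = \bigvee_{k \leqslant n} \Psi_k$. Then $\diam(\Phi_n) \to 0$, so by Theorem~\ref{thm: vanish diam metric} the sequence $h_n(\mu) = \tth_\mu(T,\Phi_n)$ converges to $h_\mu(T)$. It is non-decreasing, as noted in Lemma~\ref{lemma: eq class refining pu}, so it lies in $\cH^{p.u.}$. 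Its lift is
\begin{align*}
\nu \mapsto \tth_{\pi_*\nu}(T,\Phi_n) = \tth_\nu(T',\pi\Phi_n),
\end{align*}
where $\pi\Phi_n = \{\varphi\circ\pi\}$ is a continuous partition of unity on $X'$.

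\textbf{First domination: $\cH^{ref} \unidom$ lift.} As in the proof of Proposition~\ref{prop: thmu leq hmu} (see the remark after it), for each $n$ and $\gamma>0$ there is a partition $\cA$ of $X'$ with $\ttH_\nu(\pi\Phi_n | \mathbbm{1}_{\cA}) < \gamma$ uniformly in $\nu$. This gives $\tth_\nu(T',\pi\Phi_n) \leqslant h_\nu(T',\cA) + \gamma$. The partition $\cA$ is defined through level sets of the functions in $\pi\Phi_n$, so it is not clopen. To fix this, replace $\mathbbm{1}_\cA$ by $\mathbbm{1}_{\cA_k}$ for $k$ large. Since the functions $\varphi\circ\pi$ are uniformly continuous and $\diam(\cA_k)\to 0$, their oscillation on each atom of $\cA_k$ is eventually below the threshold used in that proof. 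The same computation then gives $\ttH_\nu(\pi\Phi_n|\mathbbm{1}_{\cA_k}) < \gamma$ uniformly in $\nu$. Hence $\tth_\nu(T',\pi\Phi_n) \leqslant h_\nu(T',\cA_k)+\gamma$, which is the required domination.

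\textbf{Second domination: lift $\unidom \cH^{ref}$.} We need, for each $k$ and $\gamma$, some $n$ with $h_\nu(T',\cA_k) \leqslant \tth_\nu(T',\pi\Phi_n) + \gamma$ uniformly in $\nu$. We have $h_\nu(T',\cA_k) - \tth_\nu(T',\pi\Phi_n) \leqslant \tth_\nu(T', \mathbbm{1}_{\cA_k} | \pi\Phi_n)$. For fixed $k$, the sequence $n \mapsto \tth_\nu(T',\mathbbm{1}_{\cA_k}|\pi\Phi_n)$ is non-increasing (Lemma~\ref{lemma: basic prop static} iii), and each term is upper semi-continuous in $\nu$ (Proposition~\ref{prop: upper semi-continuity}, since $\mathbbm{1}_{\cA_k}$ is continuous on $X'$). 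If the pointwise limit is $0$, Dini's argument (as in Lemma~\ref{lemma: eq class refining pu}) makes the convergence uniform. It then remains to show the pointwise limit vanishes:
\begin{align*}
\lim_n \tth_\nu(T',\mathbbm{1}_{\cA_k}|\pi\Phi_n) = \lim_n \bigl[\tth_\nu(T',\mathbbm{1}_{\cA_k}\vee\pi\Phi_n) - \tth_\nu(T',\pi\Phi_n)\bigr].
\end{align*}
The subtracted term tends to $h_{\pi_*\nu}(T) = h_\nu(T')$ by Theorem~\ref{thm: vanish diam metric} and principality. The first term is at most $h_\nu(T')$ by Proposition~\ref{prop: thmu leq hmu}, and at least $\tth_\nu(T',\pi\Phi_n)$. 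Hence the limit is $0$.

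\textbf{Main obstacle.} The delicate step is this last pointwise vanishing. It rests entirely on the extension being principal, so that the lifted partitions $\pi\Phi_n$ recover all of $h_\nu(T')$ even though their diameters on $X'$ need not vanish. I would state that step carefully, together with the clopen replacement in the first domination. The conclusion then follows from Definition~\ref{def: entropy structure}.
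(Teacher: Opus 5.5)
Your proof is correct and is essentially the paper's argument: you lift to a zero-dimensional principal extension, use principality to get $\tth_\nu(T',\pi\Phi_n)\to h_\nu(T')$ even though $\diam(\pi\Phi_n)$ need not vanish on $X'$, and conclude with the Dini argument behind Lemma~\ref{lemma: eq class refining pu}. The one difference is that you get $\cH^{ref}\unidom\pi\cH$ directly from the uniform oscillation bound of Proposition~\ref{prop: thmu leq hmu}, applied to the clopen partitions $\cA_k$; the paper instead applies Lemma~\ref{lemma: eq class refining pu} on $X'$ to the pair $(\pi\Phi_n)_n$, $(\mathbbm{1}_{\cA_k})_k$ to obtain both dominations at once, and does so for an arbitrary $\Phi_n=\bigvee_{k\leqslant n}\Psi_k$ rather than one with vanishing diameters.
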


\begin{proof}
Let $(X',T')$ be a zero-dimensional principal extension of $(X,T)$, and $\pi : X' \to X$ the (continuous) topological factor map. Let $\cA_k$ be a refining sequence of partitions into clopen sets of $X'$. Denote $\cH^{ref} = (\nu \mapsto h_\nu(T',\cA_k))$ the reference entropy structure. Notice that since the atoms of each $\cA_k$ are clopen, the partitions of unity $\mathbbm{1}_{\cA_k}$ are continuous. We can see from Lemma~\ref{lemma: eq class refining pu}, applied to $(X',T')$ that a different choice of sequence $\cA_k$ leads to a candidate uniformly equivalent to $\cH^{ref}$. 

Consider now a sequence of continuous partitions of unity over $X$ of the form $\Phi_n = \bigvee_{k=0}^{n} \Psi_k$. Denote $\cH = (\mu \mapsto \tth_\mu(T,\Phi_n))$. Therefore, $\pi\Phi_n$ is a sequence of continuous partition of unity over $X'$. Furthermore, for any $\nu \in \cM(X',T')$,
\begin{align*}
\lim_{n \to \infty} \tth_\nu(T',\pi \Phi_n) = \lim_{n \to \infty} \tth_{\pi_* \nu}(T,\Phi_n) = h_{\pi_* \nu}(T) = h_\nu(T'),
\end{align*}
because the extension is principal. Therefore, the lift $\pi \cH$ of $\cH$ is a candidate for $(X',T')$ and $\pi\Phi_n = \bigvee_{k=0}^{n} \pi \Psi_k$. Since $h_\nu(T',\cA_k) = \tth_\nu(T', \mathbbm{1}_{\cA_k})$, by applying Lemma~\ref{lemma: eq class refining pu}, we get that $\pi \cH$ and $\cH^{ref}$ are uniformly equivalent. That is, $\cH$ is an entropy structure.
\end{proof}

Before considering continuous partitions of unity $\Phi_k$ with the only condition that $\diam(\Phi_k)$ converges to zero, we need to introduce another candidate -- the definition of candidate is slightly extended to allow for families indexed by $\delta > 0$, the convergence to the entropy map being understood as $\delta \to 0$.

\begin{definition}
For any $\delta > 0$, define
\begin{align*}
\tth_\mu(T,\delta) = \inf \{ \tth_\mu(T,\Phi) \mid \diam(\Phi) \leqslant \delta \},
\end{align*}
where the infimum ranges over continuous partition of unity.
\end{definition}

\begin{proposition}
The candidate $\cH^{loc} = (\mu \mapsto \tth_\mu(T,\delta))_{\delta > 0}$ is an entropy structure.
\end{proposition}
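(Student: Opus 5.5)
The plan is to show that $\cH^{loc}$ is a candidate and then compare it, in both directions, with a candidate from the class $\cH^{p.u.}$. By Proposition~\ref{prop: increasing entropy structure with pu}, members of $\cH^{p.u.}$ are entropy structures, so uniform equivalence with one of them suffices.

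\textbf{Candidate property.} The map $\delta \mapsto \tth_\mu(T,\delta)$ is non-increasing in $\delta$, since the infimum is taken over a larger set when $\delta$ grows. It is therefore non-decreasing as $\delta \to 0$. By Proposition~\ref{prop: thmu leq hmu} it is bounded above by $h_\mu(T)$. For the lower bound, fix any sequence $\Phi_\delta$ with $\diam(\Phi_\delta)\leqslant\delta$. That alone does not control the infimum, so I instead use the estimate from the proof of Theorem~\ref{thm: vanish diam metric}: for a fixed positive $\Phi$ and any $\Psi$ with $\diam(\Psi)$ small enough (depending only on $\Phi$ and $\ve$), the bound $\ttH_\mu(\Phi|\Psi)\leqslant\ve$ holds uniformly in $\Psi$. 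This gives $\tth_\mu(T,\Psi)\geqslant\tth_\mu(T,\Phi)-\ve$. Taking the infimum over $\Psi$ yields $\tth_\mu(T,\delta)\geqslant \tth_\mu(T,\Phi)-\ve$ for small $\delta$. Taking the supremum over positive $\Phi$, via the lemma on positive partitions of unity, gives pointwise convergence to $h_\mu(T)$.

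\textbf{Comparison with $\cH^{p.u.}$.} Fix $\Psi_k$ with $\diam(\Psi_k)\to0$ and set $\Phi_n=\bigvee_{k\leqslant n}\Psi_k$. Then $\diam(\Phi_n)\leqslant\diam(\Psi_n)\to0$, so by Theorem~\ref{thm: vanish diam metric} the sequence $\cH=(\tth_\mu(T,\Phi_n))_n$ lies in $\cH^{p.u.}$.

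One direction is immediate. For any $\delta$, choose $n$ with $\diam(\Phi_n)\leqslant\delta$; then $\tth_\mu(T,\delta)\leqslant\tth_\mu(T,\Phi_n)$ for all $\mu$. Hence $\cH\unidom\cH^{loc}$.

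For the other direction, fix $n$ and $\gamma>0$. I need some $\delta$ such that $\tth_\mu(T,\Phi_n)\leqslant\tth_\mu(T,\Psi)+\gamma$ for every $\mu$ and every $\Psi$ with $\diam(\Psi)\leqslant\delta$. Taking the infimum over $\Psi$ then gives $\cH^{loc}\unidom\cH$.

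\textbf{Main obstacle.} The difficulty is that $\Phi_n$ need not be positive, so the earlier uniform-in-$\mu$ bound does not apply directly. I would first replace $\Phi_n$ by the positive perturbation $\Phi_{n,\eta}=\{(\varphi+\eta/N)/(1+\eta)\}$. For it, the estimate in the proof of Theorem~\ref{thm: vanish diam metric} gives $\ttH_\mu(\Phi_{n,\eta}|\Psi)\leqslant\ve$ uniformly in both $\mu$ and $\Psi$ once $\diam\Psi$ is small. This is because the uniform convergence of $\sum_\psi\psi\log\mu_\psi(\varphi)$ to $\log\varphi$ used only that $\diam\Psi$ is small.

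It then remains to bound $\tth_\mu(T,\Phi_n)-\tth_\mu(T,\Phi_{n,\eta})$ uniformly in $\mu$. I would use $\tth_\mu(T,\Phi_n\,|\,\Phi_{n,\eta})\leqslant\ttH_\mu(\Phi_n|\Phi_{n,\eta})$ and show that the latter tends to $0$ uniformly as $\eta\to0$.

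If this direct estimate proves delicate, the fallback is a Dini-type argument. The functions $\mu\mapsto\tth_\mu(T,\Phi_n|\Phi_{n,\eta})$ are upper semi-continuous by Proposition~\ref{prop: upper semi-continuity}. Applying Dini along a suitable monotone modification, as in Lemma~\ref{lemma: eq class refining pu}, would then give uniformity. I also need pointwise convergence to $0$ here, which I would prove by the same continuity argument used in Proposition~\ref{prop: thmu leq hmu}.
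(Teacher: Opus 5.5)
Your proof of the candidate property and of the direction $\cH\unidom\cH^{loc}$ is fine. The step you single out as the main obstacle does not go through by either route you propose.

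\textbf{The direct estimate fails.} The claim that $\ttH_\mu(\Phi_n|\Phi_{n,\eta})\to 0$ as $\eta\to0$ is false in general. As $\eta\to 0$ this quantity tends to $\ttH_\mu(\Phi_n|\Phi_n)$. As the paper remarks right after Lemma~\ref{lemma: basic prop static}, $\ttH_\mu(\Phi|\Phi)\neq 0$ unless each $\varphi\in\Phi$ is a multiple of an indicator function. Conditioning on a partition of unity that is \emph{close} to $\Phi$ does not remove entropy; only conditioning on one with \emph{small supports} does.

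\textbf{The Dini fallback fails for the same reason:} the pointwise limit is not $0$. Take the full $2$-shift with the Bernoulli$(\frac12,\frac12)$ measure and $\Phi=\{\varphi,1-\varphi\}$ with $\varphi=0.3\,\mathbbm{1}_{[0]}+0.7\,\mathbbm{1}_{[1]}$. All functions of $\Phi\vee\Phi_\eta$ depend only on $x_0$, so by independence the static entropies are additive along orbits. Hence $\tth_\mu(T,\Phi|\Phi_\eta)=\ttH_\mu(\Phi|\Phi_\eta)\to\ttH_\mu(\Phi|\Phi)\approx 0.069>0$. The uniform closeness of $\tth_\mu(T,\Phi_n)$ and $\tth_\mu(T,\Phi_{n,\eta})$ may well hold, but it would need a different argument from the bound by $\tth_\mu(T,\Phi_n|\Phi_{n,\eta})$.

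\textbf{The missing observation is that positivity is not needed at all.} The positivity hypothesis in Theorem~\ref{thm: vanish diam metric} is an artifact of the log-ratio estimate used there. Take any continuous $\Phi$ and $\psi\in\Psi$ with $\mu(\psi)>0$. The measure $\mu_\psi$ is carried by $\supp\psi$, so $\mu_\psi(\varphi\log\varphi)\le\mu_\psi(\varphi)\log\sup_{\supp\psi}\varphi$. Lemma~\ref{lemma: classical}, used as in Proposition~\ref{prop: thmu leq thtop}, then gives $\ttH_{\mu_\psi}(\Phi)\le\log\sum_{\varphi\in\Phi}\sup_{\supp\psi}\varphi\le\log\bigl(1+\#\Phi\,\omega(\diam\Psi)\bigr)$, where $\omega$ is a common modulus of continuity of the $\varphi\in\Phi$. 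Averaging over $\psi$ yields $\ttH_\mu(\Phi|\Psi)\le\log\ttH(\Phi|\Psi)\le\log\bigl(1+\#\Phi\,\omega(\diam\Psi)\bigr)$. This bound is uniform in $\mu$ and in all $\Psi$ of a given diameter. The Jensen-gap estimate from the proof of Proposition~\ref{prop: thmu leq hmu}, with $\Psi$ in place of $\mathbbm{1}_\cA$, works equally well.

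Applied to $\Phi=\Phi_n$, this gives $\tth_\mu(T,\Phi_n)\le\tth_\mu(T,\Psi)+\gamma$ for all $\mu$ and all $\Psi$ with $\diam\Psi\le\delta$, hence $h_n\le h_\delta+\gamma$ directly. Applied to arbitrary $\Phi$, it also gives the candidate property without the lemma on positive partitions. This is exactly the paper's proof. Its inequality $\ttH_\mu(\Phi_n|\Phi')\le\ttH(\Phi_n|\Phi')$ should be read with a logarithm on the right, since $\ttH(\cdot|\cdot)\ge1$.
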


\begin{proof}
In order to simplify notation, denote $h_\delta(\mu) = \tth_\mu(T,\delta)$. We start by proving that $\cH^{loc}$ is indeed a candidate. It is clear from the definition that the family $h_\delta$ is monotone. We first check that $h_\delta$ converges point-wise to the metric entropy as $\delta$ goes to zero. Let $\Phi$ and $\Phi'$ be continuous partitions of unity. Therefore
\begin{align*}
\tth_\mu(T,\Phi) - \tth_\mu(T,\Phi') \leqslant \tth_\mu(T,\Phi \vee \Phi') - \tth_\mu(T,\Phi') = \tth_\mu(T, \Phi | \Phi') \leqslant \ttH_\mu(\Phi | \Phi') \leqslant \ttH(\Phi | \Phi').
\end{align*}
For any $\ve > 0$, there is a $\delta > 0$ depending only on $\ve$ and $\Phi$ such that if $\diam(\Phi') \leqslant \delta$, then $\ttH(\Phi | \Phi') < \ve$. Therefore 
\begin{align*}
\tth_\mu(T) \geqslant \limsup_{\delta \to 0} h_\delta(\mu) \geqslant \liminf_{\delta \to 0} h_\delta(\mu) \geqslant \tth_\mu(T,\Phi) - \ve.
\end{align*}
Taking the supremum over all $\Phi$ in the above, and the limit $\ve \to 0$ yield $\tth_\mu(T) = \lim_{\delta \to 0} h_\delta(\mu)$.

In order to prove the proposition, it is sufficient to prove that $\cH^{loc}$ is uniformly equivalent to a candidate from $\cH^{p.u.}$.

Let $\Phi_n = \bigvee_{k=0}^n \Psi_k$ where $\Psi_k$ are continuous partitions of unity with $\diam(\Psi_k)$ converging to zero. Denote $h_n(\mu) = \tth_\mu(T,\Phi_n)$ and $\cH = (h_n)_n$. Therefore, for any $\delta > 0$, there is a $k$ such that $\diam(\Psi_k) < \delta$. In particular, for every $n \geqslant k$, $\diam(\Phi_n) < \delta$ and therefore $h_\delta \leqslant h_n$. In other words, $\cH \unidom \cH ^{loc}$. 

Let $\Phi'$ be a continuous partition of unity. Therefore
\begin{align*}
h_n(\mu) - \tth_\mu(T,\Phi') \leqslant \tth_\mu(T,\Phi_n \vee \Phi') - \tth_\mu(T,\Phi') = \tth_\mu(T, \Phi_n | \Phi') \leqslant \ttH_\mu(\Phi_n | \Phi') \leqslant \ttH(\Phi_n | \Phi') .
\end{align*}
Now, for all $n$ and $\ve > 0$, there is a $\delta > 0$ such that if $\diam(\Phi') < \delta$, then $\ttH(\Phi_n | \Phi') < \ve$. Therefore $h_n \leqslant h_\delta + \ve$, or in other words $\cH^{loc} \unidom \cH$.
\end{proof}

\begin{corollary}\label{corollary: tail var pple}
For any sequence $\Psi_k$ of continuous partitions of unity with $\diam(\Psi_k)$ converging to $0$, it holds
\begin{align*}
h^*(T) = \lim_{k \to \infty} \sup_{\mu \in \cM(X,T)} h_\mu(T) - \tth_\mu(T,\Psi_k)
\end{align*}
\end{corollary}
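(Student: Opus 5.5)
Since Corollary~\ref{corollary: tail var pple} already appears as Theorem~\ref{thm: tail var ppl}, the aim here is to re-derive it from the entropy-structure machinery rather than from Burguet's results. The plan is to compare $\cH_\Psi = (\mu \mapsto \tth_\mu(T,\Psi_k))_k$ with the candidate $\cH^{loc}$, which we have just shown to be an entropy structure. For entropy structures, Downarowicz's theory gives the tail variational principle $h^*(T) = \lim_{\delta \to 0} \sup_\mu (h_\mu(T) - h_\delta(\mu))$. It then remains to squeeze $\sup_\mu (h_\mu(T) - \tth_\mu(T,\Psi_k))$ between quantities that converge to this limit.

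For the lower bound, set $\delta_k = \diam(\Psi_k)$. By definition of $\tth_\mu(T,\delta)$, we have $\tth_\mu(T,\delta_k) \leqslant \tth_\mu(T,\Psi_k)$ for every $\mu$. Hence
\begin{align*}
\sup_\mu \bigl(h_\mu(T) - \tth_\mu(T,\Psi_k)\bigr) \leqslant \sup_\mu \bigl(h_\mu(T) - \tth_\mu(T,\delta_k)\bigr),
\end{align*}
and the right-hand side tends to $h^*(T)$ as $k \to \infty$. This gives the $\limsup$ inequality.

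For the reverse inequality, fix a candidate $\cH = (h_n)$ in $\cH^{p.u.}$, say $h_n(\mu) = \tth_\mu(T,\Phi_n)$ with $\Phi_n = \bigvee_{j \leqslant n} \Psi'_j$ for some auxiliary sequence of vanishing diameters. I then argue as in the proof that $\cH^{loc}$ is an entropy structure. For fixed $n$ and $\ve > 0$, there is $\delta > 0$ such that $\diam(\Psi_k) < \delta$ implies $\ttH(\Phi_n \mid \Psi_k) < \ve$. Consequently
\begin{align*}
h_n(\mu) - \tth_\mu(T,\Psi_k) \leqslant \ttH_\mu(\Phi_n \mid \Psi_k) \leqslant \ve
\end{align*}
uniformly in $\mu$, and so
\begin{align*}
\sup_\mu \bigl(h_\mu(T) - \tth_\mu(T,\Psi_k)\bigr) \geqslant \sup_\mu \bigl(h_\mu(T) - h_n(\mu)\bigr) - \ve
\end{align*}
for all $k$ large. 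Taking $\liminf_k$, then $n \to \infty$ and $\ve \to 0$, and using that $\cH$ is an entropy structure (Proposition~\ref{prop: increasing entropy structure with pu}), so that $\lim_n \sup_\mu (h_\mu - h_n) = h^*(T)$, gives the $\liminf$ inequality.

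The main obstacle is justifying that entropy structures satisfy the tail variational principle $\lim_k \sup (h - h_k) = h^*(T)$. I would cite it from Downarowicz~\cite{downarowicz05} through the reference candidate on a zero-dimensional principal extension combined with Burguet's \cite{burguet09}, noting that the quantity $\lim_k \sup(h - h_k)$ is invariant under uniform equivalence and under lifting by principal extensions. A second point needing care is the uniform bound $\ttH(\Phi_n \mid \Psi_k) \to 0$ as $\diam(\Psi_k) \to 0$. This follows from uniform continuity of the finitely many functions of $\Phi_n$ together with the estimate $\ttH_\mu \leqslant \ttH$ already used in the preceding proof.
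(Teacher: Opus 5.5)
\textbf{There is a genuine gap: your second comparison proves the same inequality as the first, not the reverse one.}

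Your first comparison is fine. The bound $\tth_\mu(T,\delta_k)\leqslant\tth_\mu(T,\Psi_k)$ gives $\limsup_k\sup_\mu\bigl(h_\mu(T)-\tth_\mu(T,\Psi_k)\bigr)\leqslant h^*(T)$.

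In the second step the inequality points the wrong way. From $h_n(\mu)-\tth_\mu(T,\Psi_k)\leqslant\ve$ you get $h_\mu(T)-\tth_\mu(T,\Psi_k)\leqslant h_\mu(T)-h_n(\mu)+\ve$. Hence $\sup_\mu\bigl(h_\mu(T)-\tth_\mu(T,\Psi_k)\bigr)\leqslant\sup_\mu\bigl(h_\mu(T)-h_n(\mu)\bigr)+\ve$. This is again the $\limsup$ bound you already had, not the claimed $\geqslant$.

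Both of your comparisons bound $\tth_\mu(T,\Psi_k)$ from \emph{below}. The $\liminf$ inequality needs an \emph{upper} bound on $\tth_\mu(T,\Psi_k)$, uniform in $\mu$, by a term of an entropy structure whose index tends to infinity with $k$. No bound $\tth_\mu(T,\Psi_k)\leqslant h_n(\mu)+\ve$ with $n$ fixed and $k\to\infty$ can hold, since $\tth_\mu(T,\Psi_k)\to h_\mu(T)$. So conditioning the fixed $\Phi_n$ on the ever finer $\Psi_k$ cannot give the missing direction.

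The paper closes this by taking the auxiliary sequence to be $\Psi$ itself. With $\Phi_k=\bigvee_{j\leqslant k}\Psi_j=\Psi_k\vee\Phi_{k-1}$, Lemma~\ref{lemma: basic prop static}~i) and iv) give $\tth_\mu(T,\Psi_k)\leqslant\tth_\mu(T,\Phi_k)$. You then have the squeeze $\tth_\mu(T,\delta_k)\leqslant\tth_\mu(T,\Psi_k)\leqslant\tth_\mu(T,\Phi_k)$ between two entropy structures. In particular $\sup_\mu(h_\mu(T)-\tth_\mu(T,\Psi_k))\geqslant\sup_\mu(h_\mu(T)-\tth_\mu(T,\Phi_k))\to h^*(T)$.

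Alternatively, you can keep an arbitrary auxiliary sequence $\Psi'_j$ but swap the roles in your conditional estimate:
\begin{itemize}
\item For fixed $k$, $\tth_\mu(T,\Psi_k)-h_n(\mu)\leqslant\ttH_\mu(\Psi_k\mid\Phi_n)\leqslant\log\ttH(\Psi_k\mid\Phi_n)$, and this tends to $0$ as $n\to\infty$ because $\diam(\Phi_n)\to0$.
\item Choose $n(k)\geqslant k$ with this quantity at most $1/k$.
\item Since $\sup_\mu(h_\mu(T)-h_n(\mu))$ is non-increasing in $n$ with limit $h^*(T)$, you get $\sup_\mu(h_\mu(T)-\tth_\mu(T,\Psi_k))\geqslant h^*(T)-1/k$.
\end{itemize}

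A minor point: $\ttH(\Phi\mid\Psi)\geqslant1$ always, so your uniform estimate should read $\log\ttH(\Phi_n\mid\Psi_k)<\ve$, using $\ttH_\mu(\Phi\mid\Psi)\leqslant\log\ttH(\Phi\mid\Psi)$. The paper makes the same slip.
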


\begin{proof}
Since $\cH^{p.u.}$ and $\cH^{loc}$ are entropy structures, it follows from \cite{downarowicz05} (in the case $T$ is invertible, and from \cite{burguet09} otherwise) that
\begin{align*}
h^*(T) 
= \lim_{\delta \to 0} \sup_{\mu \in \cM(X,T)} h_\mu(T) - \tth_\mu(T,\delta)
= \lim_{n \to \infty} \sup_{\mu \in \cM(X,T)} h_\mu(T) - \tth_\mu(T,\Phi_n),
\end{align*}
where $\Phi_n = \bigvee_{k=0}^n \Psi_k$. Now, taking $\delta_k = \diam(\Psi_k)$, it follows that for every $k \geqslant 0$,
\begin{align}\label{eq: comparison entropy structures}
\tth_{\mu}(T,\delta_k) \leqslant \tth_\mu(T,\Psi_k) \leqslant \tth_\mu(T,\Phi_k),
\end{align}
from which the claim follows easily.
\end{proof}

\subsection{Transfinite sequences and superenvelops}

To any candidate $\cH$, one can associate \emph{superenvelops} and a transfinite sequence of functions, indexed by ordinal and defined by transfinite induction. Before recalling the definitions, we introduce some notations. 

\begin{definition}
If $f : K \to \bR$ is a function, its upper semi-continuous envelop is defined as\footnote{The traditional notation is with a tilde. We change it here to avoid confusion with the other notation $\tth$.}
\begin{align*}
\overline{f}(x) \coloneqq \max\{ f(x) , \limsup_{y \to x} f(y) \}.
\end{align*}
The defect of upper semi-continuity of $f$ is defined as
\begin{align*}
\overset{...}{f} = \overline{f} - f.
\end{align*}
\end{definition}

\begin{definition}
Given a candidate $\cH = (h_k)$ and denoting its tails $\theta_k(\mu) = h_\mu(T) - h_k(\mu)$, we can define the transfinite sequence $u_\alpha^{\cH}$ of functions over $\cM(X,T)$ by 
\begin{align*}
u_0^{\cH} &\equiv 0, \\
u_\alpha^\cH &= \lim_{k \to \infty} \overline{ \sup_{\beta < \alpha} u_\beta^\cH + \theta_k }.
\end{align*}
A superenvelop of $\cH$ is any function $E : \cM(X,T) \to \bR$ such that $E(\mu) \geqslant h_\mu(T)$ and 
\begin{align*}
\lim_{k \to \infty} \overset{.............}{E-h_k} = 0.
\end{align*}
\end{definition}

Downarowicz proved that uniformly equivalent candidates shares the same transfinite sequence functions and the same superenvelops. In \cite{BD04}, Boyle and Downarowicz proved that to any candidate there is a countable ordinal $\alpha$ such that $u^{\cH}_{\beta} \equiv u^{\cH}_{\alpha}$ for every $\beta \geqslant \alpha$. The least ordinal $\alpha$ with this property is denoted $\alpha_0(\cH)$ and is called the order of accumulation of $\cH$. The importance of these notions comes from the case $\cH$ is an entropy structure: the pointwise infimum $E \cH$ over all superenvelops is a superenvelop which coincides with $\mu \mapsto h_\mu(T) + u^{\cH}_{\alpha_0(\cH)}(\mu)$, which itself coincides with the symbolic extension entropy.

\begin{proposition}\label{prop: same superenvelops and sequence}
If $\Psi_k$ is a sequence of continuous partitions of unity such that $\diam(\Psi_k)$ converges to $0$, then $\cH = (\mu \mapsto \tth_\mu(T,\Psi_k))_k$ has the same transfinite sequence of functions and superenvelops any entropy structure.
\end{proposition}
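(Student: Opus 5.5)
The strategy is to sandwich $\cH = (\mu \mapsto \tth_\mu(T,\Psi_k))_k$ between two entropy structures already available, and then check that the transfinite sequence and the superenvelops only depend on the tails $\theta_k$ up to uniformly small errors. Set $\delta_k = \diam(\Psi_k)$ and $\Phi_k = \bigvee_{j=0}^k \Psi_j$. Inequality \eqref{eq: comparison entropy structures} gives
\begin{align*}
\tth_\mu(T,\delta_k) \leqslant \tth_\mu(T,\Psi_k) \leqslant \tth_\mu(T,\Phi_k).
\end{align*}
Here $(\mu\mapsto\tth_\mu(T,\Phi_k))_k \in \cH^{p.u.}$ and $\cH^{loc}$ are both entropy structures. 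Since $\delta_k \to 0$, the sequence $(\tth_\mu(T,\delta_k))_k$ is cofinal in $\cH^{loc}$, so it is uniformly equivalent to it. Because of the monotonicity issue, it is cleanest to replace it by a non-decreasing reindexing, for instance $\delta'_k = \max_{j \geqslant k} \delta_j$, which tends to $0$ and is non-increasing. Call the lower structure $\cH^- = (h^-_k)$ and the upper one $\cH^+ = (h^+_k)$, with tails $\theta^-_k \geqslant \theta_k \geqslant \theta^+_k$, where $\theta_k$ is the tail of $\cH$. These inequalities hold after the reindexing as well, since $\tth_\mu(T,\delta'_k) \leqslant \tth_\mu(T,\delta_k)$.

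\textbf{Key step: uniform comparability of the tails.} Since $\cH^-$ and $\cH^+$ are uniformly equivalent, for every $k$ and every $\gamma>0$ there is an $\ell$ with $\theta^-_\ell \leqslant \theta^+_k + \gamma$. Conversely, trivially $\theta^+_\ell \leqslant \theta^-_\ell$. Combining these with the sandwich, for every $k,\gamma$ there is an $\ell$ such that
\begin{align*}
\theta_\ell \leqslant \theta^+_k + \gamma \quad\text{and}\quad \theta^+_\ell \leqslant \theta_\ell \leqslant \theta^-_\ell.
\end{align*}
In words, the tails of $\cH$ are, up to arbitrarily small uniform errors, cofinally squeezed between the tails of $\cH^+$. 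This is all the definitions use, even though $\theta_k$ need not be non-increasing.

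\textbf{Transfinite sequence.} I will prove $u^{\cH}_\alpha = u^{\cH^+}_\alpha = u^{\cH^-}_\alpha$ by transfinite induction, the outer equalities at $\alpha$ being Downarowicz's theorem for uniformly equivalent candidates. Assume equality for all $\beta<\alpha$ and put $s = \sup_{\beta<\alpha} u_\beta$. The operator $f \mapsto \overline{f}$ is monotone and satisfies $\overline{f+\gamma} = \overline f + \gamma$. Hence $\overline{s+\theta^+_k} \leqslant \overline{s+\theta_k} \leqslant \overline{s+\theta^-_k}$. Since $\theta_k$ is not monotone, the limit in $k$ defining $u^\cH_\alpha$ has to be interpreted, or shown to exist. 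I will show that both $\liminf_k$ and $\limsup_k$ of $\overline{s+\theta_k}$ equal the common value $u^{\cH^\pm}_\alpha$. Both are bounded below by $\lim_k \overline{s+\theta^+_k}$ and above by $\lim_k\overline{s+\theta^-_k}$, which agree.

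\textbf{Superenvelops.} If $E$ is a superenvelop of entropy structures, then $E-h_k$ lies between $E-h^-_k$ and $E-h^+_k$. I expect this to be the main obstacle, since the defect $\overset{...}{f}$ is not monotone in $f$. I will use $\overline{f}-f$ with $f = E-h^+_k + (\theta_k - \theta^+_k)$ and the bound $0 \leqslant \theta_k-\theta^+_k \leqslant \theta^-_k - \theta^+_k$. The goal is the estimate
\begin{align*}
\overset{...}{f+g} \leqslant \overset{...}{f} + \overline{g}
\end{align*}
for $g \geqslant 0$. I will then show that $\overline{\theta^-_k-\theta^+_k}$ can be made uniformly small along a cofinal choice of indices, using that uniformly equivalent candidates have $\sup(h^+_{\ell}-h^-_{\ell'})$ small for suitable $\ell'$. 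Here I will try first to pass to subsequences, pairing each $k$ with $\ell(k)$ so that $\theta^-_{\ell(k)} \leqslant \theta^+_k+\gamma_k$, and then use the monotonicity of $\theta^\pm$ to transfer the estimate to the whole sequence. The converse direction, that superenvelops of $\cH$ are superenvelops of entropy structures, is symmetric.
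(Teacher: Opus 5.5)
Your treatment of the transfinite sequence is the same sandwich argument as the paper's: you bound $\overline{s+\theta_k}$ between the envelopes built from the tails of $\cH^{loc}$ and $\cH^{p.u.}$. Your reindexing $\delta'_k=\max_{j\geqslant k}\delta_j$ is a harmless improvement. The superenvelope part, however, has a genuine gap.

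You write $E-h_k=(E-h^+_k)+g_k$ with $g_k=h^+_k-h_k\geqslant 0$ and use $\overset{...}{(f+g)}\leqslant\overset{...}{f}+\overline{g}$. This requires $\overline{g_k}(\mu)\to 0$, i.e.\ $\limsup_{\nu\to\mu}(h^+_k-h_k)(\nu)$ small at the \emph{same} index $k$. Uniform equivalence gives nothing of the sort. It only yields $h^+_k\leqslant h^-_{\ell}+\gamma$ for some $\ell=\ell(k,\gamma)$, typically much larger than $k$. The only bound you have on $h^+_k-h^-_k$ is $\theta^-_k$, whose envelope tends to $u_1$, not to $0$.

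Your proposed transfer via monotonicity goes the wrong way: $\theta^-_k\geqslant\theta^-_{\ell(k)}$ gives a lower bound, not an upper bound, on $\theta^-_k-\theta^+_k$. Passing to subsequences of $\cH$ is not available either, since the superenvelope condition concerns the limit along the whole non-monotone sequence $\cH$.

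The idea the paper uses is that the correction term has zero defect. Since $\Phi_k=\Psi_k\vee\Phi_{k-1}$, one has $h^+_k-h_k=\tth_\mu(T,\Phi_{k-1}\mid\Psi_k)$. This is upper semi-continuous by Proposition~\ref{prop: upper semi-continuity}, so $\overset{...}{(E-h_k)}\leqslant\overset{...}{(E-h^+_k)}$ outright. You discarded this when you bounded $\overset{...}{g}$ crudely by $\overline{g}$.

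Alternatively, you can keep the pure sandwich but choose the side that makes the correction non-positive. For $m\leqslant k$, write $E-h_k=(E-h^-_m)+(h^-_m-h_k)$. Here $h^-_m-h_k\leqslant 0$, so its defect at $\mu$ is at most $h_k(\mu)-h^-_m(\mu)\leqslant\theta^-_m(\mu)$. Hence $\limsup_k\overset{...}{(E-h_k)}(\mu)\leqslant\overset{...}{(E-h^-_m)}(\mu)+\theta^-_m(\mu)$, which tends to $0$ as $m\to\infty$.

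For the converse, $E-h^+_k=(E-h_k)+(h_k-h^+_k)$ has a non-positive correction whose defect at $\mu$ is at most $\theta_k(\mu)\to 0$. This direction is therefore not ``symmetric'' to the first one in your framework: it works precisely because the correction term has the favourable sign.
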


\begin{proof}
This is a direct consequence of \eqref{eq: comparison entropy structures}. More precisely, for $\alpha = 0$, we have $u_0 = u_0^{ref} \equiv 0$, where $u_\alpha^{ref}$ denotes the transfinite sequence of function associated to any entropy structure, and $u_\alpha$ the one associated to $\cH$. If $\alpha$ is an ordinal such that for every $\beta \leqslant \alpha$, $u_\beta = u_\beta^{ref}$, then set $\Phi_n = \bigvee_{k=0}^n \Psi_k$ and $\delta_k = \diam(\Psi_k)$, as well as the tails
\begin{align*}
\theta_k(\mu) &= h_\mu(T) - \tth_\mu(T,\Psi_k), \\
\theta_k^{p.u.}(\mu) &= h_\mu(T) - \tth_\mu(T,\Phi_k), \\
\theta_k^{loc}(\mu) &= h_\mu(T) - \tth_\mu(T,\delta_k).
\end{align*}
Therefore, since $\cH^{p.u.}$ and $\cH^{loc}$ are entropy structures, we get from \eqref{eq: comparison entropy structures} that
\begin{align*}
\limsup_{k \to \infty} \sup_{\beta < \alpha +1} \overline{u_\beta + \theta_k} \leqslant \lim_{k \to \infty} \sup_{\beta < \alpha +1} \overline{u^{ref}_\beta + \theta_k^{loc}} = u_{\alpha +1}^{ref},
\end{align*}
and
\begin{align*}
\liminf_{k \to \infty} \sup_{\beta < \alpha +1} \overline{u_\beta + \theta_k} \geqslant \lim_{k \to \infty} \sup_{\beta < \alpha +1} \overline{u^{ref}_\beta + \theta_k^{p.u.}} = u_{\alpha +1}^{ref}.
\end{align*}
In particular, $u_{\alpha+1}$ is well defined and coincides with $u_{\alpha +1}^{ref}$.

Let $E$ be a superenvelop of $\cH^{p.u.}$. Hence
\begin{align*}
0 \leqslant \lim_{k \to \infty} (\overset{...............}{E-h_k})(\mu) \leqslant \lim_{k \to \infty} (\overset{.................}{E-h_k^{p.u.}})(\mu) + \lim_{k \to \infty} (\overset{....................}{h_k^{p.u.}-h_k})(\mu) = 0,
\end{align*}
where we used that $(h_k^{p.u.}-h_k)(\mu) = \tth_\mu(T,\Phi_{k-1}|\Psi_k)$ is upper semi-continuous with respect to $\mu$.
Therefore $E$ is a superenvelop of $\cH$. Conversely, consider $E$ a superenvelop of $\cH$. Let $\mu \in \cM(X,T)$, $\ve > 0$ and $k$ large enough so that both $(\overset{...............}{E-h_k})(\mu) < \ve$ and $h_\mu(T) - h_k(\mu) < \ve$. There exists $n_k$ large enough so that both $h_k - h_{n_k}^{p.u.} < \ve$ and $h_\mu(T) - h_{n_k}^{p.u.}(\mu) < \ve$. Using that $h_k^{p.u.}$ is non-decreasing, we get
\begin{align*}
0 \leqslant \lim_{k \to \infty} (\overset{.................}{E-h_k^{p.u.}})(\mu) &\leqslant (\overset{.................}{E-h_{n_k}^{p.u.}})(\mu) \\
&\leqslant (\overset{...............}{E-h_k})(\mu) + (\overline{ h_k - h_{n_k}^{p.u.}})(\mu) - (h_k - h_{n_k}^{p.u.})(\mu) < 4\ve. 
\end{align*}
Therefore $E$ is also a superenvelop  of $\cH^{p.u.}$.
\end{proof}

\subsection{Larger equivalence classes}

For a general sequence of continuous partitions of unity $\Phi_k$ with $\diam(\Phi_k)$ converging to zero, the sequence of functions $\cH = (\mu \mapsto \tth_\mu(T,\Phi_k))_{k \in \bN}$ is not a candidate since we cannot ensure it is non-decreasing. By weakening the notions of candidate and of uniform equivalence, we can enlarge the notion of entropy structure without losing the uniformity of transfinite sequence of functions and superenvelops on a class. The sequence $\cH$ then belongs to the enlarged class of entropy structure. More precisely, we start by defining a suitable notion that replace the non-decreasing condition over candidates.

\begin{definition}
A sequence $(v_k)_k$ is said to be almost-increasing if for any $\gamma > 0$ and $k \geqslant 0$ there exists $\ell_{\gamma,k}$ such that for every $\ell \geqslant \ell_{\gamma,k}$, it holds $v_k \leqslant v_\ell + \gamma$. A sequence $(v_k)_k$ is said to be almost decreasing if $(-u_k)_k$ is almost increasing.
\end{definition}
This notion has the convenient following property.
\begin{lemma}
Any non-negative almost-decreasing sequence converges.
\end{lemma}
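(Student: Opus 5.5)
We aim to show that for a non-negative almost-decreasing sequence $(v_k)_k$ of real numbers, one has $\limsup_k v_k \leqslant \liminf_k v_k$. Convergence then follows, and the limit is finite because the sequence is bounded below by $0$. Unwinding the definition, almost-decreasing means: for every $\gamma > 0$ and $k \geqslant 0$ there exists $\ell_{\gamma,k}$ such that $v_\ell \leqslant v_k + \gamma$ for all $\ell \geqslant \ell_{\gamma,k}$.

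First, $L \coloneqq \liminf_k v_k$ is finite. Non-negativity gives $L \geqslant 0$. Taking $k=0$ and $\gamma=1$ gives $v_\ell \leqslant v_0 + 1$ for all $\ell \geqslant \ell_{1,0}$, so $L \leqslant v_0 + 1 < \infty$. The same bound shows that $\limsup_k v_k$ is finite as well.

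Next, fix $\gamma > 0$. By definition of the $\liminf$, there is an index $k$ with $v_k \leqslant L + \gamma$. In fact there are infinitely many such indices, but a single one suffices. Applying almost-decreasingness at this $k$ yields $v_\ell \leqslant v_k + \gamma \leqslant L + 2\gamma$ for all $\ell \geqslant \ell_{\gamma,k}$. Hence $\limsup_\ell v_\ell \leqslant L + 2\gamma$. Since $\gamma > 0$ is arbitrary, $\limsup_k v_k \leqslant \liminf_k v_k$, so the sequence converges.

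There is no real obstacle here. The only point requiring care is that the lower bound (non-negativity) is what guarantees the $\liminf$ is finite rather than $-\infty$. Without it, a sequence like $v_k = -k$ would be almost-decreasing but would not converge.
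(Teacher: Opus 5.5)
Your proof is correct and follows the paper's argument: both bound $\limsup_n v_n \leqslant v_k + \gamma$ for each fixed $k$ and then pass to $\liminf_k v_k$. The only difference is that you choose a single index $k$ with $v_k \leqslant \liminf v + \gamma$ and check finiteness explicitly, while the paper takes the $\liminf$ over $k$ directly.
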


\begin{proof}
Let $v_k$ be an almost-decreasing sequence. Fix some $\ve > 0$ and $k$. There is an $n_{k,\ve}$ such that for every $n \geqslant n_{k,\ve}$, $v_n \leqslant v_k + \ve$. In particular, $ 0 \leqslant \limsup_{n \to \infty} v_n \leqslant v_k + \ve$. Since this holds for any $k$, we deduce that $\limsup_{n \to \infty} v_n \leqslant \liminf_{k \to \infty} v_k + \ve$, which yields the claim.
\end{proof}

We can now defined an enlarge notion of candidates and uniform equivalence between them.

\begin{definition}\label{def: weak}
We call weak-candidate any almost-increasing sequence $\cH$ of maps $h_k : \cM(X,T) \to \bR$ converging pointwise to the entropy map of $T$. \\
Given two weak-candidates $\cH = (h_k)_k$ and $\cH'= (h'_\ell)_\ell$, we say that $\cH'$ weakly-uniformly dominates $\cH$ if for any $\gamma > 0$ and any $k \geqslant 0$ there exists $\ell \geqslant 0$ such that $h_k \leqslant h'_\ell + \gamma$. We keep the notation $\cH' \unidom \cH$. \\
If $\cH' \unidom \cH$ and $\cH \unidom \cH'$, we say that $\cH$ and $\cH'$ are weakly-uniformly equivalent.
\end{definition}

\begin{theorem}\label{thm: same repair and superenvelops}
If $\cH$ and $\cH'$ are weakly-uniformly equivalent weak-candidates, then they have the same transfinite sequence of functions and the same superenvelops.
\end{theorem}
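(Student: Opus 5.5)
The plan is to work with the tails $\theta_k = h - h_k$ and $\theta'_\ell = h - h'_\ell$, which are non-negative up to vanishing errors and tend to $0$ pointwise. Weak-uniform equivalence says: for every $\gamma>0$ and every $k$ there is an $\ell$ with $\theta'_\ell \leqslant \theta_k + \gamma$, and symmetrically. Almost-increasingness of $\cH'$ upgrades this to all $\ell' \geqslant \ell_{\gamma,\ell}$: we get $\theta'_{\ell'} \leqslant \theta'_\ell + \gamma \leqslant \theta_k + 2\gamma$ for all large $\ell'$. This ``eventually for all indices'' form is what replaces the monotonicity used in Downarowicz's original argument.

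\textbf{Step 1: the limits defining $u_\alpha$ exist.} For a weak candidate the limit $\lim_k \overline{\sup_{\beta<\alpha} u_\beta + \theta_k}$ is not automatically defined, since $\theta_k$ is only almost-decreasing. Fix $f = \sup_{\beta<\alpha} u_\beta$. The sequence $g_k = \overline{f+\theta_k}$ is then almost-decreasing in $k$. Indeed, $\theta_\ell \leqslant \theta_k + \gamma$ for $\ell \geqslant \ell_{\gamma,k}$, and the envelope operator is monotone and commutes with adding constants, so $g_\ell \leqslant g_k + \gamma$. The envelope of $f+\theta_k \geqslant -\gamma$ is bounded below, so by the lemma just proved (non-negative almost-decreasing sequences converge, applied after shifting), $g_k$ converges pointwise. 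I would handle $+\infty$ values as in Downarowicz. The same applies to $\cH'$.

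\textbf{Step 2: equality of the transfinite sequences.} I argue by transfinite induction. Assume $u_\beta = u'_\beta$ for all $\beta<\alpha$, and set $f$ as above. For each $k$ and $\gamma$, Step 1's upgrade gives $\overline{f+\theta'_{\ell'}} \leqslant \overline{f+\theta_k} + 2\gamma$ for all large $\ell'$. Letting $\ell'\to\infty$ and then $k\to\infty$ yields $u'_\alpha \leqslant u_\alpha + 2\gamma$. By symmetry, and since $\gamma$ is arbitrary, $u_\alpha = u'_\alpha$. Limit and successor ordinals are treated the same way because the definition uses $\sup_{\beta<\alpha}$ uniformly.

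\textbf{Step 3: equality of the superenvelops.} Let $E$ be a superenvelop of $\cH$. Write $E - h'_{\ell'} = (E - h_k) + (\theta'_{\ell'} - \theta_k)$. The defect of upper semicontinuity is subadditive: $\overset{...}{(a+b)} \leqslant \overset{...}{a} + \overset{...}{b}$, using $\overline{a+b}\leqslant \overline a + \overline b$. The obstacle is bounding the defect of $\theta'_{\ell'} - \theta_k = h_k - h'_{\ell'}$. All we know is $-2\gamma \leqslant$-type one-sided bounds, while the defect could be large if the difference oscillates. I would exploit both directions. First, $h_k - h'_{\ell'} \leqslant 2\gamma$ for large $\ell'$, so its envelope is $\leqslant 2\gamma$. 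Second, pointwise $h_k(\mu) - h'_{\ell'}(\mu) \to h_k(\mu) - h(\mu) = -\theta_k(\mu)$ as $\ell'\to\infty$. Hence the defect at $\mu$ is at most $2\gamma + \theta_k(\mu) + o(1)$.

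Then I let $\ell'\to\infty$ along the almost-decreasing sequence of defects; Step 1's argument shows these defects converge, or I can work with $\limsup$. Next I let $k\to\infty$, using $\overset{...}{(E-h_k)}\to 0$ and $\theta_k(\mu)\to 0$. This gives $\lim_{\ell'} \overset{...}{(E-h'_{\ell'})}(\mu) \leqslant 2\gamma$, so $E$ is a superenvelop of $\cH'$. Symmetry finishes the argument. I expect this defect estimate, which needs both the uniform one-sided bound and pointwise convergence, to be the delicate point, along with checking that ``$\lim$'' in the superenvelop definition exists for weak candidates.
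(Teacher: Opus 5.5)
Your proposal is correct and follows essentially the same route as the paper. The shared steps are: convergence of $\overline{\sup_{\beta<\alpha}u_\beta+\theta_k}$ via the almost-decreasing lemma; transfinite induction using weak-uniform domination upgraded by almost-increasingness; and the splitting $E-h'_\ell=(E-h_k)+(h_k-h'_\ell)$ with subadditivity of the defect, where $\overline{h_k-h'_\ell}$ is bounded by the uniform estimate and $h'_\ell(\mu)-h_k(\mu)$ is handled pointwise. Your use of a $\limsup$ in the last step, together with non-negativity of the defects, is if anything slightly cleaner than the paper's assertion that the defects form an almost-decreasing sequence.
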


\begin{proof}
First, notice that the transfinite sequence of functions is well defined. Indeed, if $h_k$ is an almost-increasing sequence, then for any ordinal $\alpha$ such that $u_\beta$ is well defined for each $\beta < \alpha$, we get that for every $\mu \in \cM(X,T)$, the sequence $\overline{\sup_{\beta < \alpha} u_\beta + \theta_k}(\mu)$ is almost-decreasing and non-negative, hence converges. Thus $u_\alpha(\mu)$ is well defined.

Now, for $\cH = (h_k)$ and $\cH' = (h'_k)$, we get that for any fixed $\ve > 0$, and $k$ there is an $\ell' \geqslant k$ such that for every $\ell \geqslant \ell'$, $h_k \leqslant h'_\ell + \ve$. Therefore, for $\alpha = 0$ we have that $u_0 = u'_0$. Furthermore, if $u_\beta = u'_\beta$ for every $\beta < \alpha$, we get that
\begin{align*}
\overline{\sup_{\beta < \alpha} u_\beta + \theta_k } \geqslant \lim_{\ell \to \infty} \overline{\sup_{\beta < \alpha} u'_\beta + \theta'_\ell } - \ve.
\end{align*}
Letting $k$ tends to infinity and $\ve$ to zero, yield $u_\alpha \geqslant u'_\alpha$. By symmetry, exchanging the roles of $\cH$ and $\cH'$ gives the desired equality. 

Let $E$ be a superenvelop of $\cH$. Therefore, for $\ve > 0$ and $\mu \in \cM(X,T)$, we get
\begin{align*}
(\overset{..............}{E - h'_\ell})(\mu) \leqslant (\overset{..............}{E - h_k})(\mu) + (\overline{h_k - h'_\ell})(\mu) - (h_k - h'_\ell)(\mu).
\end{align*}
Chose $k$ large enough so that $(\overset{..............}{E - h_k})(\mu) < \ve$, and $h_\mu(T) - h_k(\mu) < \ve$. For any $\ell$ large enough so that $\ell > k$, $h_\mu(T) - h'_\ell(\mu) < \ve$ and $h_k - h'_\ell < \ve$, we get that
\begin{align*}
(\overset{..............}{E - h'_\ell})(\mu) < 4\ve.
\end{align*}
Since $(\overset{..............}{E - h'_\ell})(\mu)$ is a non-negative almost-decreasing sequence, it converges, and the limit has to be zero. Thus $E$ is also a superenvelop of $\cH'$.
\end{proof}

As a consequence, we have that,

\begin{corollary}\label{thm: tail var pple general version}
If $\cH = (h_k)_k$ is a weak-candidate weakly-uniformly equivalent to an entropy structure, then it satisfies the tail variational principle:
\begin{align*}
\lim_{k \to \infty} \sup_{\mu \in \cM(X,T)} h_\mu(T) - h_k(\mu) = h^*(T).
\end{align*}
\end{corollary}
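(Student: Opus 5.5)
Let $\cH^{ref}$ be any entropy structure, for instance a candidate in $\cH^{p.u.}$ (Proposition~\ref{prop: increasing entropy structure with pu}). Write $h_k^{ref}$ for its functions and $\theta_k = h_\mu(T) - h_k(\mu)$, $\theta^{ref}_k = h_\mu(T) - h_k^{ref}(\mu)$ for the tails. The plan is to show that $\Theta_k := \sup_\mu \theta_k(\mu)$ and $\Theta^{ref}_k := \sup_\mu \theta^{ref}_k(\mu)$ have the same limit as $k\to\infty$. For the genuine entropy structure this limit equals $h^*(T)$, by \cite{downarowicz05} in the invertible case and \cite{burguet09} in general; the paper already invoked this in Corollary~\ref{corollary: tail var pple}.

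\textbf{Step 1: $\Theta_k$ converges.} Since $\cH$ is a weak-candidate, $(h_k)$ is almost-increasing: for every $\gamma>0$ and $k$, and all $\ell$ large, $h_k \leqslant h_\ell+\gamma$ pointwise. Hence $\theta_\ell \leqslant \theta_k+\gamma$, and taking suprema gives $\Theta_\ell \leqslant \Theta_k + \gamma$. So $(\Theta_k)$ is almost-decreasing. It is non-negative once $\theta_k \geqslant 0$; if that fails, $\Theta_k$ is still bounded below because the $\theta_k$ tend to $0$ pointwise. By the lemma on almost-decreasing sequences, $\lim_k \Theta_k$ exists. If $\htop(T)=\infty$, I would treat the suprema as extended reals.

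\textbf{Step 2: comparing the two limits.} Fix $\gamma>0$ and $k$. Since $\cH^{ref} \unidom \cH$, there is $\ell$ with $h_k \leqslant h^{ref}_\ell+\gamma$. Then $\theta^{ref}_\ell \leqslant \theta_k + \gamma$, so $\Theta^{ref}_\ell \leqslant \Theta_k+\gamma$. Because $\Theta^{ref}$ is non-increasing (the reference is a genuine candidate), $\lim \Theta^{ref} \leqslant \Theta^{ref}_\ell$, hence $h^*(T) \leqslant \Theta_k+\gamma$ for every $k$. Letting $k\to\infty$ and then $\gamma\to0$ gives $h^*(T)\leqslant \lim\Theta_k$.

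For the reverse direction, use $\cH \unidom \cH^{ref}$. For each $\ell$ there is $k$ with $h^{ref}_\ell \leqslant h_k + \gamma$, so $\Theta_k \leqslant \Theta^{ref}_\ell+\gamma$. Taking $\ell\to\infty$ gives $\Theta_k \leqslant h^*(T)+2\gamma$ for some $k$, but only for one $k$ depending on $\ell$, and this single bound does not by itself bound the limit. This is where Step 1 is needed: since $\lim\Theta_k$ exists (equivalently, $(\Theta_k)$ is almost-decreasing), a bound at one index transfers to all later indices up to $\gamma$. So $\lim\Theta_k \leqslant h^*(T)+3\gamma$.

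\textbf{Main obstacle.} The delicate point is that the domination relation only produces \emph{some} index $\ell$ (or $k$) rather than all large indices. Replacing monotonicity by almost-monotonicity, through the convergence established in Step 1, is exactly what allows a one-index inequality to become a statement about the limit. The remaining step, identifying $\lim\Theta^{ref}$ with $h^*(T)$, is quoted from the cited literature.
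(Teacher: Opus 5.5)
Your argument is correct, but it takes a different route from the paper's. The paper works with the first transfinite function. It quotes Burguet's identity $\sup_\mu u_1(\mu) = h^*(T)$ and uses Theorem~\ref{thm: same repair and superenvelops} to know that $\cH$ has the same $u_1 = \lim_k \overline{\theta_k}$ as an entropy structure. It then exchanges $\lim_k$ and $\sup_\mu$ through its auxiliary lemma and proposition on almost-decreasing sequences of upper semi-continuous functions on the compact set $\cM(X,T)$, and finishes with $\sup \overline{\theta_k} = \sup \theta_k$.

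You instead compare the numbers $\Theta_k = \sup_\mu \theta_k(\mu)$ directly with those of a genuine entropy structure:
\begin{itemize}
\item weak domination by $\cH^{ref}$, plus monotonicity of $\Theta^{ref}$, gives $h^*(T) \leqslant \liminf_k \Theta_k$;
\item the reverse domination, plus the almost-decreasing property of $(\Theta_k)$, gives $\limsup_k \Theta_k \leqslant h^*(T)$.
\end{itemize}
The almost-decreasing property needs almost-increasingness of $(h_k)$ read uniformly in $\mu$, as the paper also does.

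Your route is purely order-theoretic. It needs no upper semi-continuous envelopes, no limit/supremum exchange lemma, and no Theorem~\ref{thm: same repair and superenvelops}. All the analysis sits in the tail variational principle for one genuine entropy structure. That is available from \cite{downarowicz05,burguet09}, or even from the paper's own Theorem~\ref{thm: tail var ppl} applied to $\Phi_n = \bigvee_{k \leqslant n}\Psi_k$. In effect you show that $\lim_k \sup_\mu \theta_k$ is an invariant of weak-uniform equivalence. The paper's route identifies the whole function $u_1$, not just its supremum, and fits its structural framework of transfinite sequences.

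Two small fixes:
\begin{itemize}
\item Take $\cH^{ref}$ to be the entropy structure given in the hypothesis. Otherwise note that all entropy structures are uniformly equivalent and that weak domination is transitive.
\item Your last step uses the almost-decreasing property of $(\Theta_k)$, not mere convergence. These are not equivalent: $v_k = -1/k$ converges but is not almost-decreasing. Once almost-decreasingness is in hand, the existence of $\lim_k \Theta_k$ follows from your two inequalities, so the convergence claim in Step~1 is superfluous.
\end{itemize}
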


In order to prove this corollary, we need a technical result allowing to exchange a limit and a supremum. We proceed in two steps.

\begin{lemma}
If $f_k$ is an almost-decreasing sequence of non-negative upper semi-continuous functions on a compact set $K$ converging pointwise to some function $f$, and $g$ is a continuous function such that $g > f$, then for any $\ve >0$ there is a $k_\ve$ such that for any $k \geqslant k_\ve$, $g + \ve > f_k$.
\end{lemma}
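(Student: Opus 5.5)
This is a Dini-type argument, with the almost-decreasing property standing in for monotonicity. Fix $\ve > 0$ and, for each $k$, consider the set
\begin{align*}
F_k \coloneqq \{ x \in K \mid f_k(x) \geqslant g(x) + \ve \}.
\end{align*}
Since $f_k$ is upper semi-continuous and $g$ is continuous, $f_k - g$ is upper semi-continuous, so each $F_k$ is closed in the compact set $K$, hence compact. We want some $k_\ve$ with $F_k = \emptyset$ for all $k \geqslant k_\ve$. Monotonicity fails, so the sets $F_k$ are not nested. We will therefore nest them by hand, using a slightly smaller threshold.

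\textbf{Step 1 (pointwise emptiness).} For each $x$, we have $f_k(x) \to f(x) < g(x)$, so there is $k_x$ with $f_k(x) < g(x) + \ve/2$ for all $k \geqslant k_x$.

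\textbf{Step 2 (spreading to a neighbourhood).} By upper semi-continuity of $f_{k_x} - g$, there is an open neighbourhood $U_x$ of $x$ on which $f_{k_x} < g + \ve/2$. By the almost-decreasing property applied to the index $k_x$ and $\gamma = \ve/2$, there is $\ell_x$ such that
\begin{align*}
f_\ell \leqslant f_{k_x} + \ve/2 \quad \text{everywhere on } K, \text{ for every } \ell \geqslant \ell_x.
\end{align*}
Here I interpret almost-decreasing for functions uniformly, i.e.\ as a pointwise inequality between functions, exactly as in Definition~\ref{def: weak} and in the way the paper compares $h_k \leqslant h'_\ell + \gamma$. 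Combining the two bounds, on $U_x$ we get $f_\ell < g + \ve$ for all $\ell \geqslant \ell_x$.

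\textbf{Step 3 (compactness).} Extract a finite subcover $U_{x_1}, \dots, U_{x_m}$ of $K$ and set $k_\ve \coloneqq \max_i \ell_{x_i}$. Then for any $k \geqslant k_\ve$ and any $y \in K$, choose $i$ with $y \in U_{x_i}$; Step 2 gives $f_k(y) < g(y) + \ve$.

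The main obstacle is Step 2. We need the almost-decreasing bound to be uniform in the point, i.e.\ an inequality of functions with $\ell_{\gamma,k}$ independent of $x$. If it only held pointwise, the compactness argument would break down, so I would state explicitly that the definition is read for functions as a global inequality. Non-negativity of the $f_k$ is not needed beyond ensuring the limit exists.
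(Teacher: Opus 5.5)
Your proof is correct, but it uses compactness differently from the paper.

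The paper keeps the classical Dini structure of nested closed sets:
\begin{itemize}
\item it extracts a subsequence with $f'_{k+1} \leqslant f'_k + 2^{-k}\ve$;
\item it sets $K_k = \{ f'_k - g \geqslant \sum_{i<k} 2^{-i}\ve \}$, whose slowly growing thresholds make the $K_k$ nested compact sets with empty intersection, so some $K_{k_\ve}$ is empty;
\item it then invokes the almost-decreasing property again to pass from $f'_{k_\ve}$ back to the whole tail of $(f_k)$, ending with the bound $g + 3\ve$.
\end{itemize}

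You work with an open cover instead. You take one good index $k_x$ per point, and use upper semi-continuity of $f_{k_x} - g$ to get a neighbourhood. The almost-decreasing property then pushes the bound to all $\ell \geqslant \ell_x$ at once, and a finite subcover finishes. This removes the subsequence extraction and the threshold bookkeeping, and yields $g + \ve$ directly. The paper's nesting inequality in fact contains a slip at that point: $2^{k}$ should be $2^{-k}$. The paper's route mainly buys visible closeness to the monotone nested-sets proof of Dini's theorem.

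Both arguments need the same uniform reading of almost-decreasing: an inequality between functions, with $\ell_{\gamma,k}$ independent of the point. The paper uses it both in the extraction and in its last line. You are right to flag this, because it is genuinely needed. On $K=[0,1]$, take continuous tents of height $1$ supported on $[1/(k+1),1/k]$, with $f \equiv 0$ and $g \equiv 1/2$. These are pointwise almost-decreasing but contradict the statement. Neither proof uses non-negativity of the $f_k$.

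One cosmetic point. Your opening sentence announcing that you will nest the sets $F_k$ by hand describes the paper's strategy, not yours. The $F_k$ play no role in Steps 1--3, so that sentence and their definition can be dropped.
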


\begin{proof}
Let $\ve > 0$. From the sequence $(f_k)$ we can extract a subsequence $(f'_k)$ such that $f'_{k+1} \leqslant f'_k + 2^{-k} \ve$ for every $k \geqslant 0$. It remains that $f'_k$ converge pointwise to $f$. Define the sequence of sets
\begin{align*}
K_k \coloneqq \{ x \in K \mid (f'_k - g)(x) \geqslant \sum_{i=0}^{k-1} 2^{-i}\ve \}
\end{align*}
This is a decreasing sequence of compact sets. Indeed, since $f'_k-g$ is upper semi-continuous, the set $K_k$ is a closed subset of the compact set $K$, hence compact. Furthermore, if $x \in K_{k+1}$, then
\begin{align*}
(f'_k-g)(x) \geqslant (f'_{k+1}-g)(x) - 2^{k}\ve \geqslant \sum_{i=0}^{k} 2^{-i}\ve - 2^{k}\ve = \sum_{i=0}^{k-1} 2^{-i}\ve.
\end{align*}
Thus $x \in K_k$, and therefore $K_{k+1} \subset K_k$.

Now, for any $x \in K$, since $\lim_{k \to \infty} f'_k(x) = f(x) < g(x)$, we get that $x \notin K_k$ for any large enough $k$. Therefore $\bigcap_{k \geqslant 0} K_k = \emptyset$. Hence, there is a $k_\ve$ such that $K_k = \emptyset$ for any $k \geqslant k_\ve$. Since $(f_k)$ is almost decreasing, there is an $N$ such that for any $k \geqslant N$,
\begin{align*}
f_k \leqslant f'_{k_\ve} + \ve < g + 3 \ve.
\end{align*}
\end{proof}

\begin{proposition}
If $f_k$ is an almost-decreasing sequence of upper semi-continuous functions on a compact set $K$, then
\begin{align*}
\lim_{k \to \infty} \sup_{x \in K} f_k(x) = \sup_{x \in K} \lim_{k \to \infty} f_k(x).
\end{align*}
\end{proposition}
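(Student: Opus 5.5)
We prove the two inequalities separately. Write $f \coloneqq \lim_{k} f_k$ for the pointwise limit. The previous lemma only gives convergence for non-negative sequences, but the argument is the same without a sign condition: for fixed $x$, $\limsup_n f_n(x) \leqslant f_k(x) + \ve$ for all $k$. Hence $\limsup_n f_n(x) \leqslant \liminf_k f_k(x)$, so $f(x)$ exists in $[-\infty,\infty)$.

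\emph{The inequality ``$\geqslant$''.} Fix $x \in K$ and $\ve>0$. For every $k$ there is $\ell_{\ve,k}$ with $f_\ell \leqslant f_k + \ve$ for $\ell \geqslant \ell_{\ve,k}$. Symmetrically, applying almost-decrease to $\sup_K f_k$ (for each $k$ and $\ell \geqslant \ell_{\ve,k}$ we have $\sup_K f_\ell \leqslant \sup_K f_k + \ve$), the real sequence $s_k \coloneqq \sup_K f_k$ is almost-decreasing. By the same argument as above, $\lim_k s_k$ exists in $[-\infty,\infty)$. Since $s_k \geqslant f_k(x)$ for every $k$, letting $k \to \infty$ gives $\lim_k s_k \geqslant f(x)$. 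Taking the supremum over $x$ yields this direction.

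\emph{The inequality ``$\leqslant$''.} This is the substantive part. If $\sup_K f = +\infty$ there is nothing to prove. Otherwise fix $c > \sup_K f$, a real constant; we may take $c>\sup_K f$ strictly. Apply the preceding lemma with $g \equiv c$, which is continuous and satisfies $g > f$. That lemma was stated for non-negative $f_k$, but nonnegativity plays no role in its proof: it uses only the upper semi-continuity of $f'_k - g$ (so the sets $K_k$ are compact), the nesting obtained from the extracted subsequence with summable increments, and the emptiness of the intersection coming from pointwise convergence. We would remark this, or reduce to the non-negative case by adding the constant $-\min(0,\inf_K f_0)$ after checking boundedness below. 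The lemma then gives, for every $\ve>0$, an index $k_\ve$ with $f_k < c + \ve$ on $K$ for all $k \geqslant k_\ve$. Hence $\lim_k s_k \leqslant c + \ve$, and letting $\ve \to 0$ and $c \downarrow \sup_K f$ gives the result.

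The main obstacle is this second step, namely the Dini-type uniformity. It is entirely delegated to the preceding lemma, so the only care needed is to justify applying that lemma without the non-negativity hypothesis. In our intended use, the $f_k$ are of the form $\overline{h - h_k}$ or tails, which are non-negative, so the lemma applies directly in that setting.
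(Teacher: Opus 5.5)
Your proof is correct and follows the paper's route. The ``$\geqslant$'' direction comes from almost-decrease; the paper bounds $\sup_K f_\ell$ below directly, where you first show $\sup_K f_k$ converges. The ``$\leqslant$'' direction comes from the preceding Dini-type lemma with a constant $g$: the paper argues by contradiction with $g \equiv \limsup_k \sup_K f_k$, while you apply the lemma directly with $g \equiv c > \sup_K f$. Your observation that non-negativity is never used in that lemma's proof is right, and it covers a point the paper passes over silently. Your fallback of shifting by a constant would not work in general, since the $f_k$ need not be uniformly bounded below, but it is not needed.
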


\begin{proof}
Since $f_k$ is almost decreasing, we get that for any $x \in K$, $\ell \geqslant 0$ and $\ve > 0$, $f_\ell(x) \geqslant \lim_{k \to \infty} f_k(x) - \ve$. Thus $\sup_{x \in K} f_\ell(x) \geqslant \sup_{x \in K} \lim_{k \to \infty} f_k(x)$. Therefore
\begin{align*}
\liminf_{k \to \infty} \sup_{x \in K} f_k(x) \geqslant \sup_{x \in K} \lim_{k \to \infty} f_k(x).
\end{align*} 
For the reverse inequality, first, let $g$ be the continuous function constant and equals to \[ g \equiv \limsup_{k \to \infty} \sup_{x \in K} f_k(x). \]
By contradiction, assume that $g - \sup_{x \in K} \lim_{k \to \infty} f_k(x) = \ve_0 > 0$. Denoting $f$ the pointwise limit of $f_k$, we get that $g \geqslant f + \ve_0 > f + \ve_0/2$. Therefore, there is a $N_0$ such that for any $\ell \geqslant N_0$, $f_\ell \leqslant g - \ve_0/4$.

Now, from the definition of $g$, we get that for every $k$ there is some $\ell \geqslant k$ such that $\sup_{x \in K} f_\ell(x) \geqslant g - \ve_0/8$. In particular, for $k = N_0$, we get that
\begin{align*}
g - \ve_0/8 \leqslant \sup_{x \in K} f_k(x) \leqslant g - \ve_0/4,
\end{align*}  
a contradiction. We have thus proved that
\begin{align*}
\limsup_{k \to \infty} \sup_{x \in K} f_k(x) \leqslant \sup_{x \in K} \lim_{k \to \infty} f_k(x) \leqslant \liminf_{k \to \infty} \sup_{x \in K} f_k(x),
\end{align*}
which concludes the proof.
\end{proof}

\begin{proof}[Proof of Corollary~\ref{thm: tail var pple general version}]
In \cite{burguet09}, Burguet proved that $\sup_{\mu \in \cM(X,T)} u_1(\mu) = h^*(T)$. Therefore
\begin{align*}
h^*(T) = \sup_{\mu \in \cM(X,T)} u_1(\mu) &= \sup_{\mu \in \cM(X,T)} \lim_{k \to \infty} (\overline{h_\cdot(T) - h_k})(\mu) \\
&= \lim_{k \to \infty} \sup_{\mu \in \cM(X,T)} (\overline{h_\cdot(T) - h_k})(\mu) \\
&= \lim_{k \to \infty} \sup_{\mu \in \cM(X,T)} h_\mu(T) - h_k(\mu),
\end{align*}
where for the last equality we used that a function and its upper semi-continuous have the same supremum.
\end{proof}

\begin{theorem}\label{thm: weak entropy structure}
If $\Psi_k$ a sequence of continuous partitions of unity such that $\diam(\Psi_k)$ converges to $0$, then $\cH = (\mu \mapsto \tth_\mu(T,\Psi_k))_k$ is a weak-candidate weakly-uniformly equivalent to any entropy structure.
\end{theorem}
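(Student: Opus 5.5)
The proof sandwiches $\cH$ between the two entropy structures already built, $\cH^{loc}$ and $\cH^{p.u.}$. Set $\Phi_n = \bigvee_{k=0}^n \Psi_k$ and $\delta_k = \diam(\Psi_k)$, and write $h_k(\mu) = \tth_\mu(T,\Psi_k)$, $h^{p.u.}_n(\mu) = \tth_\mu(T,\Phi_n)$ and $h^{loc}_\delta(\mu) = \tth_\mu(T,\delta)$. The basic tool is the comparison \eqref{eq: comparison entropy structures}: $h^{loc}_{\delta_k} \leqslant h_k \leqslant h^{p.u.}_k$. It holds because $\diam(\Psi_k) = \delta_k$, and because $\Phi_k = \Phi_{k-1} \vee \Psi_k$ gives $\tth_\mu(T,\Phi_k) = \tth_\mu(T,\Psi_k) + \tth_\mu(T,\Phi_{k-1} | \Psi_k) \geqslant \tth_\mu(T,\Psi_k)$.

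First, pointwise convergence of $h_k$ to $\mu \mapsto h_\mu(T)$ is exactly Theorem~\ref{thm: vanish diam metric} together with Corollary~\ref{corol: eq of defs metr and top}.

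Next comes weak-uniform equivalence with $\cH^{p.u.}$. Since $\cH^{p.u.}$ is an entropy structure by Proposition~\ref{prop: increasing entropy structure with pu}, it suffices to compare $\cH$ with it, and also with $\cH^{loc}$. One direction is immediate: $h_k \leqslant h^{p.u.}_k$ gives $\cH^{p.u.} \unidom \cH$. For the other direction, fix $n$ and $\gamma > 0$. As in the proof that $\cH^{loc}$ is a candidate,
\begin{align*}
h^{p.u.}_n(\mu) - h_\ell(\mu) \leqslant \ttH_\mu(\Phi_n | \Psi_\ell) \leqslant \ttH(\Phi_n | \Psi_\ell).
\end{align*}
Moreover, there is $\delta > 0$, depending only on $\Phi_n$ and $\gamma$, such that $\ttH(\Phi_n|\Psi') < \gamma$ whenever $\diam(\Psi') < \delta$. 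In fact this is what gives $h^{p.u.}_n \leqslant h^{loc}_\delta + \gamma \leqslant h_\ell + \gamma$ for all $\ell$ with $\delta_\ell < \delta$. Hence for every $\ell$ large enough, uniformly in $\mu$, we get $h^{p.u.}_n \leqslant h_\ell + \gamma$, which is $\cH \unidom \cH^{p.u.}$. One point needs care. For a general (non-positive) $\Phi_n$ the quantity $\ttH(\Phi_n | \Psi')$ is only asserted small in the paper, not proved. If necessary I would verify it directly: on $\supp \psi'$ each $\varphi \in \Phi_n$ oscillates by less than $\eta$, and uniform continuity of $t\log t$ then gives $\ttH_{\mu_{\psi'}}(\Phi_n) \leqslant \#\Phi_n \cdot \omega(\eta)$ for every probability measure, as in the proof of Proposition~\ref{prop: thmu leq hmu}.

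Then the almost-increasing property. Given $k$ and $\gamma$, use $h_k \leqslant h^{p.u.}_k$ together with the previous step applied to $n = k$. This yields $\ell_{\gamma,k}$ such that for all $\ell \geqslant \ell_{\gamma,k}$,
\begin{align*}
h_k \leqslant h^{p.u.}_k \leqslant h_\ell + \gamma
\end{align*}
uniformly on $\cM(X,T)$. So $\cH$ is almost increasing, hence a weak-candidate.

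Finally, weak-uniform equivalence is transitive: chaining the $\ell$'s and adding the $\gamma$'s gives this. Since any entropy structure is uniformly (hence weakly-uniformly) equivalent to $\cH^{p.u.}$, $\cH$ is weakly-uniformly equivalent to every entropy structure. Lifting to a zero-dimensional principal extension is not needed, because the comparison is done directly with $\cH^{p.u.}$, which is already known to be an entropy structure.

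The main obstacle is the uniform estimate $\sup_\mu \ttH_\mu(\Phi_n | \Psi_\ell) \to 0$ as $\diam(\Psi_\ell) \to 0$ for a fixed continuous $\Phi_n$. The rest is bookkeeping. I would prove that estimate through the oscillation and uniform-continuity argument sketched above.
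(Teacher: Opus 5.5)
Your proof is correct and is essentially the paper's sandwich argument via \eqref{eq: comparison entropy structures}. The only differences are that you obtain $\cH \unidom \cH^{p.u.}$ directly from $h^{p.u.}_n - h_\ell \leqslant \ttH_\mu(\Phi_n | \Psi_\ell)$ rather than through $\cH^{loc}$, and that you verify the almost-increasing property explicitly, which the paper leaves implicit. One caveat: as written, $\ttH(\Phi_n|\Psi') < \gamma$ cannot hold, because $\ttH(\Phi|\psi) \geqslant 1$ (the bound that actually tends to zero is on its logarithm), so you should rely on your fallback, which bounds $\ttH_\mu(\Phi_n|\Psi')$ directly and uniformly in $\mu$ via the oscillation estimate, and that fallback is sound.
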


\begin{proof}
Equation \eqref{eq: comparison entropy structures} yields $\cH^{p.u.} \unidom \cH \unidom \cH^{loc}$. Since $\cH^{p.u.}$ and $\cH^{loc}$ are both entropy structures, they are uniformly equivalent, and therefore weakly-uniformly equivalent.
\end{proof}

\section{Open questions}\label{sect: open}

In this section, we raise two questions, that are still open. First, in \cite{romagnoli03}, Romagnoli gave a definition of the local metric entropy function $\mu \mapsto h_\mu(T,\cU)$ involving over covers $\cU$. Since local topological pressure $\htop(T,\cU)$ with respect to the same open cover provides a bound from above, it is natural to ask whether the two quantities are related. Indeed, Romagnoli proved a local variational principle: $\sup_{\mu \in \cM(X,T)} h_\mu(T,\cU) = \htop(T,\cU)$ for any finite open cover $\cU$. This results has been extended to local pressures in~\cite{HY07} by Huang and Yi, to local conditional entropy in~\cite{R18} by Romagnoli and to local conditional pressure in~\cite{SL23} by Song and Li. 

In the context of the present article, we introduced local metric entropy $\mu \mapsto \tth_\mu(T,\Phi)$ and local topological entropy $\tthtop(T,\Phi)$ with respect to any finite continuous partition of unity $\Phi$, as well as conditional versions, and pressure. Furthermore, we proved the upper semi-continuity of the local (conditional) metric entropy function. In particular the supremum is achieved, but is it equal to the local topological entropy? That is

\begin{question}
For any continuous partition of unity $\Phi$, is it true that \[ \sup_{\mu \in \cM(X,T)} \tth_\mu(T,\Phi) = \tthtop(T,\Phi) \text{?}\]
\end{question}

We now turn to another question. In~\cite{DF05}, Downarowicz and Frej considered Markov operators (a generalisation of the Koopman operators). They proved that their is essentially one notion of metric entropy, as soon as a definition satisfies some reasonable axioms. They also introduced three definitions of topological entropy, and prove on the one hand that they provide the same quantity, and on the other hand that it is an upper bound for the metric entropy. The question of a variational principle relating the metric and topological notions is still open. 

The definition of metric and topological entropy provided in the present article can be extended to the case of Markov operators. The metric one satisfies the axioms provided by Downrowicz and Frej, and it is not too difficult to prove that the topological entropy here is less or equal to the topological entropy from \cite{DF05} (especially  their second definition $h_2(T)$). 

\begin{question}
Does the topological entropy of a Markov operator defined using continuous partitions of unity coincide with the other definitions provided in \cite{DF05}?
\end{question}

\bibliography{biblio.bib}{}
\bibliographystyle{abbrv}
%\nocite{*}

\end{document}